\def\E{\ifmmode{\mathbb E}\else{$\mathbb E$}\fi} 
\def\N{\ifmmode{\mathbb N}\else{$\mathbb N$}\fi} 
\def\R{\ifmmode{\mathbb R}\else{$\mathbb R$}\fi} 
\def\Q{\ifmmode{\mathbb Q}\else{$\mathbb Q$}\fi} 
\def\C{\ifmmode{\mathbb C}\else{$\mathbb C$}\fi} 
\def\H{\ifmmode{\mathbb H}\else{$\mathbb H$}\fi} 
\def\Z{\ifmmode{\mathbb Z}\else{$\mathbb Z$}\fi} 
\def\P{\ifmmode{\mathbb P}\else{$\mathbb P$}\fi} 
\def\T{\ifmmode{\mathbb T}\else{$\mathbb T$}\fi} 
\def\SS{\ifmmode{\mathbb S}\else{$\mathbb S$}\fi} 
\def\DD{\ifmmode{\mathbb D}\else{$\mathbb D$}\fi} 
\newcommand{\e}{\varepsilon}
\newcommand{\del}{\partial}
\newcommand{\ben}{\begin{enumerate}}
\newcommand{\een}{\end{enumerate}}
\newcommand{\be}{\begin{equation}}
\newcommand{\ee}{\end{equation}}
\newcommand{\bea}{\begin{eqnarray}}
\newcommand{\eea}{\end{eqnarray}}
\newcommand{\beastar}{\begin{eqnarray*}}
\newcommand{\eeastar}{\end{eqnarray*}}
\newcommand{\bc}{\begin{center}}
\newcommand{\ec}{\end{center}}
\theoremstyle{theorem}
\newtheorem{thm}{Theorem}[section]
\newtheorem{cor}[thm]{Corollary}
\newtheorem{lem}[thm]{Lemma}
\newtheorem{prop}[thm]{Proposition}
\theoremstyle{definition}
\newtheorem{defn}{Definition}[section]
\newtheorem{rem}[defn]{Remark}
\newtheorem{ques}[defn]{Question}
\newtheorem{exm}[defn]{Example}
\newtheorem*{thm*}{Theorem}
\numberwithin{equation}{section}
\def\R{{\mathbb R}}
\def\osc{{\hbox{\rm osc}}}
\def\Crit{{\hbox{Crit}}}
\def\E{{\mathbb E}}
\def\Z{{\mathbb Z}}
\def\C{{\mathbb C}}
\def\R{{\mathbb R}}
\def\P{{\mathbb P}}
\def\N{{\mathbb N}}
\def\11{{\mathbb I}}
\def\H{\mathbb{H}}
\def\delbar{{\overline \partial}}
\def\C{\mathbb{C}}
\def\Z{\mathbb{Z}}
\def\T{\mathbb{T}}
\def\Q{\mathbb{Q}}
\def\E{\ifmmode{\mathbb E}\else{$\mathbb E$}\fi} 
\def\N{\ifmmode{\mathbb N}\else{$\mathbb N$}\fi} 
\def\R{\ifmmode{\mathbb R}\else{$\mathbb R$}\fi} 
\def\Q{\ifmmode{\mathbb Q}\else{$\mathbb Q$}\fi} 
\def\C{\ifmmode{\mathbb C}\else{$\mathbb C$}\fi} 
\def\Z{\ifmmode{\mathbb Z}\else{$\mathbb Z$}\fi} 
\def\P{\ifmmode{\mathbb P}\else{$\mathbb P$}\fi} 
\def\CS{\ifmmode{\mathbb S}\else{$\mathbb S$}\fi} 
\def\DD{\ifmmode{\mathbb D}\else{$\mathbb D$}\fi} 
\def\R{{\mathbb R}}
\def\osc{{\hbox{\rm osc}}}
\def\Crit{{\hbox{Crit}}}
\def\E{{\mathbb E}}
\def\Z{{\mathbb Z}}
\def\C{{\mathbb C}}
\def\R{{\mathbb R}}
\def\N{{\mathbb N}}
\def\delbar{{\overline \partial}}
\def\e{\varepsilon}
\def\CA{{\mathcal A}}
\def\CC{{\mathcal C}}
\def\CM{{\mathcal M}}
\def\CP{{\mathcal P}}
\def\CP{{\mathcal P}}
\def\CS{{\mathcal S}}
\def\CT{{\mathcal T}}
\def\darr#1{\raise1.5ex\hbox{$\leftrightarrow$}
\mkern-16.5mu #1}
\def\roughly#1{\raise.3ex\hbox{$#1$\kern-.75em
\lower1ex\hbox{$\sim$}}}
\def\opname#1{\mathop{\kern0pt{\rm #1}}\nolimits}
\def\Im{\opname{Im}}
\def\dim{\opname{dim}}
\def\dudtau{\frac{\partial u}{\partial \tau}}
\def\dudt{\frac{\partial u}{\partial t}}
\def\dvdtau{\frac{\partial v}{\partial \tau}}
\def\dvdt{\frac{\partial v}{\partial t}}
\def\supp{\operatorname{supp}}
\def\Graph{\operatorname{Graph}}
\def\Spec{\operatorname{Spec}}
\def\Crit{\operatorname{Crit}}
\def\leng{\operatorname{leng}}
\def\Int{\operatorname{Int}}
\def\Sing{\operatorname{Sing}}
\begin{document}
\quad \vskip1.375truein

\def\mq{\mathfrak{q}}
\def\mH{\mathfrak{H}}
\def\mh{\mathfrak{h}}
\def\ma{\mathfrak{a}}
\def\ms{\mathfrak{s}}
\def\mm{\mathfrak{m}}
\def\mn{\mathfrak{n}}

\def\Hoch{{\tt Hoch}}
\def\mt{\mathfrak{t}}
\def\ml{\mathfrak{l}}
\def\mT{\mathfrak{T}}
\def\mL{\mathfrak{L}}
\def\mg{\mathfrak{g}}
\def\md{\mathfrak{d}}

\title[generating functions and spectral invariants]
{Geometry of generating functions and Lagrangian spectral invariants}
\author{Yong-Geun Oh}
\thanks{This work is supported by the Institute for Basic Sciences}

\address{Center for Geometry and Physics, Institute for Basic Sciences (IBS), Pohang, Korea \&
Department of Mathematics, POSTECH, Pohang, KOREA,\&
Department of Mathematics, University of Wisconsin, Madison, WI
53706}
\email{yongoh@ibs.re.kr,oh@math.wisc.edu}

\begin{abstract} Partially motivated by
the study of topological Hamiltonian dynamics, we prove various $C^0$-aspects
of the Lagrangian spectral invariants and the basic phase functions $f_H$, that is,
a natural graph selector constructed by Lagrangian Floer homology of $H$
(relative to the zero section $o_N$). In particular, we prove that
$$
\gamma^{lag}(\phi_H^1(o_N)): = \rho^{lag}(H;1) -  \rho^{lag}(H;[pt]^\#) \to 0
$$
as $\phi_H^1 \to id$, \emph{provided} $H$'s satisfy
$\supp X_H \subset D^R(T^*N) \setminus o_B$ for some $R > 0$ and
a closed subset $B \subset N$ with nonempty interior.

We also study the relationship between $f_H$ and $\rho^{lag}(H;1)$ and
prove a structure theorem of the micro-support of the singular locus $\Sing(\sigma_H)$ of
the function $f_H$. Based on this structure theorem and a classification
theorem of generic Lagrangian singularity in $\dim N = 2$ obtained by Arnold's school,
we define the notion of cliff-wall surgery when $\dim N = 2$:
the surgery replaces a multi-valued Lagrangian graph $\phi_H^1(o_N)$
by a piecewise-smooth Lagrangian cycle that is
canonically constructed out of the single valued branch
$\Sigma_H: = \Graph df_H \subset \phi_H^1(o_N)$ defined on an open dense subset of
$N \setminus \Sing(\sigma_H)$ of codimension 1.
\end{abstract}

\keywords{Lagrangian submanifolds, generating function, basic phase function,
Hamiltonian $C^0$-topology, cliff-wall surgery, Lagrangian spectral invariants}

\date{June 13, 2012; revised, June 4, 2013}

\maketitle

\hskip0.3in MSC2010: 53D05, 53D35, 53D40; 28D10.
\medskip

\tableofcontents

\section{Introduction}
\label{sec:vanishing}

We always assume that the ambient manifold $M$  or $N$ are connected throughout the entire paper.

\subsection{Weak Hamiltonian topology of $Ham(M,\omega)$}
\label{subsec:top-flows}

In \cite{oh:hameo1}, M\"uller and the author introduced the notion of
Hamiltonian topology on the subset of the space $\CP(Homeo(M),id)$ of continuous
paths on $Homeo(M)$ consisting of Hamiltonian paths $\lambda:[0,1] \to Symp(M,\omega)$ with
$\lambda(t) = \phi_H^t$ for some time-dependent Hamiltonian $H$. We denote this subset by
$$
\CP^{ham}(Symp(M,\omega),id).
$$
We would like to emphasize that we do \emph{not} assume that $H$
is normalized unless otherwise said explicitly. This is because we need to
consider both compactly supported and mean-normalized Hamiltonians
and suitably transform one to the other in the course of the proof of
the various theorems of this paper.

In this subsection, we first recall the definition from \cite{oh:hameo1}
of the Hamiltonian topology mostly restricted on the open manifold $T^*N$.
While \cite{oh:hameo1} considers \emph{strong} Hamiltonian topology, except
Remark 3.27 therein, the more relevant topology in the present paper will be
the \emph{weak} Hamiltonian topology. We first recall its definition.

For a given continuous function $h: M \to \R$, we denote
$$
\osc(h) = \max h - \min h.
$$
We define the $C^0$-distance $\overline d$ on $Homeo(M)$ by the symmetrized
$C^0$-distance
$$
\overline d(\phi,\psi) = d_{C^0}(\phi,\psi) + d_{C^0}(\phi^{-1},\psi^{-1})
$$
and the $C^0$-distance on $\CP(Homeo(M),id)$, again denoted by $\overline d$,
by
$$
\overline d(\lambda,\mu) = \max_{t \in [0,1]} \overline d(\lambda(t),\mu(t)).
$$
This induces the corresponding $C^0$-distance on $\CP^{ham}(Symp(M,\omega),id)$.
The Hofer length of Hamiltonian path $\lambda = \phi_H$ is defined by
$$
\leng(\lambda) = \int_0^1 \osc(H_t)\, dt = \|H\|.
$$
Following the notations of \cite{oh:hameo1}, we denote by $\phi_H$ the
Hamiltonian path
$$
\phi_H: t \mapsto \phi_H^t; \, [0,1] \to Ham(M,\omega)
$$

\begin{defn}\label{defn:hamtopology} Let $(M,\omega)$ be an open symplectic
manifold. Let $\lambda, \, \mu$ be smooth Hamiltonian paths with compact support in
$\Int M$. The weak Hamiltonian topology is the metric topology induced by the
metric
\be\label{eq:strong} d_{ham}(\lambda,\mu): = \overline
d(\lambda(1),\mu(1)) + \operatorname{leng}(\lambda^{-1}\mu).
\ee
\end{defn}

\subsection{Hamiltonian $C^0$-topology on $\frak{Iso}_B(o_N;T^*N)$}
\label{subsec:top-flows}

Let $N$ be a closed smooth manifold.
We equip the cotangent bundle $T^*N$ with  the Liouville one-form $\theta$ defined by
$$
\theta_x(\xi_x) = p(d\pi(\xi_x)), \quad x = (q,p) \in T^*N.
$$
The canonical symplectic form $\omega_0$ on $T^*N$ is defined by
\be\label{eq:omega0} \omega_0 = -d\theta = \sum_{k=1}^n dq^k \wedge
dp_k \ee where $(q^1,\ldots, q^n,p_1,\ldots, p_n)$ is the canonical
coordinates of $T^*N$ associated to the coordinates $(q^1,\ldots,
q^n)$ of $N$.

Consider Hamiltonian $H = H(t,x)$ such that $H_t$ is asymptotically constant, i.e.,
the one whose Hamiltonian vector field $X_H$ is compactly supported. We define
$$
\supp_{asc}H = \supp X_H : = \bigcup_{t \in [0,1]} X_{H_t}.
$$
For each given $K,\, R \in \R_+$, we define
\be\label{eq:CKR}
\CP C_{R,K}^\infty = \{H \in C^\infty([0,1]\times T^*N,\R) \mid \supp_{asc}H \subset D^R(T^*N),
\|H\| \leq K\}
\ee
which provides a natural filtration of the space $C^\infty([0,1]\times T^*N,\R)$.
We also denote
\bea\label{eq:CR}
\CP C_{R}^\infty = \bigcup_{K \in \R_+} \CP C_{R,K}^\infty, \quad
\CP C_{asc}^\infty  =  \bigcup_{R \geq 0} \CP C_R^\infty.
\eea
By definition, each element $H_t$ is independent of $x = (q,p)$ if $|p|$
is sufficiently large and so carries a smooth function $c_\infty: [0,1] \to \R$ defined by
$$
c_\infty(t) = H(t,\infty).
$$
Therefore we have the natural evaluation map
$$
\pi_\infty:\CP C_{asc}^\infty \to C^\infty([0,1],\R).
$$
For each given smooth function $c:[0,1] \to \R$,
we denote
\be\label{eq:PPascc}
\CP C^\infty_{asc;c}: = \pi_\infty^{-1}(c).
\ee
We then introduce the space of Hamiltonian deformations of
the zero section and denote
$$
\mathfrak{Iso}(o_N;T^*N) = \{\phi_H^1(o_N) \mid H \in \CP C_{asc}^\infty\}
$$
following the terminology of \cite{alan:isodrast}, and
\bea\label{eq:IsoK}
\mathfrak{Iso}(o_N;D^R(T^*N)) & = & \{\phi_H^1(o_N) \mid H \in \CP C_{R}^\infty\}\nonumber\\
\mathfrak{Iso}^K(o_N;D^R(T^*N)) & = & \{\phi_H^1(o_N) \mid H \in \CP C_{R,K}^\infty\}.
\eea

Now we equip a topology with $\mathfrak{Iso}(o_N;T^*N)$. One needs to pay some attention
in finding the correct definition of the topology suitable for the study of
Hamiltonian geometry of the set $\mathfrak{Iso}(o_N;T^*N)$. For this purpose, we
introduce the following measurement of $C^0$-fluctuation of the Hamiltonian diffeomorphism of $\phi_F^1$
along the zero section $o_N \subset T^*N$,
$$
\osc_{C^0}(\phi_F^1;o_N): = \max\left\{\max_{x \in o_N}d(\phi_F^1(x),x),
\max_{x \in o_N} d((\phi_F^1)^{-1}(x),x)\right\}.
$$
Using this we introduce the following restricted $C^0$-distance

\begin{defn}\label{defn:hamC0dist}
Let $L_0, \, L_1 \in \mathfrak{Iso}(o_N;T^*N)$
with $L_0 = \phi_{F^0}^1(o_N), \, L_1=\phi_{F^1}^1(o_N)$. We
define the following distance function
\bea\label{eq:dhamC0}
d^{ham}_{C^0}(L_0,L_1) & = & \inf_{\{H;\phi_H^1(L_0) = L_1\}} \max
\left\{\osc_{C^0}\left((\phi_{F^1}^1)^{-1}\phi_H^1\phi_{F^0}^1(o_N);o_N\right)\right., \nonumber\\
&{}& \hskip1in \left.\osc_{C^0}\left((\phi_{F^0}^1)^{-1}(\phi_H^1)^{-1}\phi_{F^1}^1(o_N);o_N\right)\right\}
\eea
on $\mathfrak{Iso}^K(o_N;D^R(T^*N))$, which induces the metric topology thereon. We equip with
$\mathfrak{Iso}(o_N;T^*N)$ the direct limit topology of $\mathfrak{Iso}^K(o_N;D^R(T^*N))$ as $R,\, K \to \infty$
and call it the \emph{Hamiltonian $C^0$-topology} of $\mathfrak{Iso}(o_N;T^*N)$.
\end{defn}

For the main theorems proved in the present paper, we will also need to consider the following
subset of Hamiltonian functions $H$.

Let $B \subset N$ be a given closed subset and $o_B \subset o_N$
the corresponding subset of the zero section. Denote by $T$ an open neighborhood of $o_B$ in
$T^*N$. We define
\be\label{eq:CascB}
\CP C_{asc;B}^\infty = \{H \in C^\infty([0,1]\times T^*N,\R) \mid \supp X_H \subset (T^*N \setminus B)
\, \text{is compact} \}.
\ee
We have the filtration
$$
\CP C_{asc;B}^\infty  = \bigcup_{T \supset B}  \bigcup_{R > 0} \CP C_{R;T}^\infty
$$
over the set of open neighborhoods $T$ of $B$ and the positive numbers $R> 0$
where
\be\label{eq:CRKT}
\CP C_T^\infty = \{H \in \CP C_{asc;B}^\infty \mid \phi_H^1 \equiv id \, \text{on }\,  T\}.
\ee
Here we would like to emphasize that \emph{the support condition on $T \supset o_B$
is imposed only for the time-one map $\phi_H^1$, but not for the whole path $\phi_H$}.
This indicates relevance of the following discussion to the weak Hamiltonian topology
described above.

Similarly as $\CP C_{R,K}^\infty$ above, we define $\CP C_{R,K;T}^\infty$.
We define $\frak{Iso}_B(o_N;T^*N)$ to be the subset
$$
\frak{Iso}_B(o_N;T^*N) = \{\phi_H^1(o_N) \mid H \in \CP C^\infty_{asc;B}\}.
$$
This has the filtration
$$
\frak{Iso}_B(o_N;T^*N) = \bigcup_{K\geq 0} \bigcup_{T \supset B} \frak{Iso}_T^K(o_N;T^*N;T)
$$
where
$$
\frak{Iso}^K_T(o_N;D^R(T^*N)) = \{\phi_H^1(o_N) \mid H \in \CP C_{R,K;T}^\infty\}.
$$
\begin{defn}\label{defn:hamC0onB} Equip with $\frak{Iso}_T^K(o_N;T^*N)$ the subspace topology of
the Hamiltonian $C^0$-topology of $\frak{Iso}(o_N;T^*N)$.
We then put on $\frak{Iso}_B(o_N;T^*N)$  the direct limit topology of $\frak{Iso}_T^K(o_N;T^*N)$
over $T \supset o_B$ and $K\geq 0$. We call this topology the Hamiltonian $C^0$-topology
of $\frak{Iso}_B(o_N;T^*N)$.
\end{defn}

Unravelling the definition, we can rephrase the meaning of the convergence $L_i \to L$
in $\frak{Iso}_B(o_N;T^*N)$ into the existence of $R, \, K > 0$, $T \supset o_B$ and a sequence
$H_i$ such that $L_i = \phi_{H_i}^1(L)$ and
\begin{enumerate}
\item $\|H_i\|\leq K$ for all $i$,
\item $\supp X_{H_i} \subset D^R(T^*N) \setminus o_B$ for all $i$,
\item $\phi_{H_i}^1 \equiv id$ on $T$ for all $i$,
\item $d^{ham}_{C^0}(L_i,L) \to 0$ as $i \to \infty$.
\end{enumerate}

\begin{rem}
\begin{enumerate}
\item We refer to the proof of Lemma \ref{lem:(T,f)} and Remark \ref{rem:support} for the reason
to take these particular support hypotheses (2), (3) imposed in our definition
of Hamiltonian $C^0$-topology of $\frak{Iso}_B(o_N;T^*N)$. This topology may be regarded as the Lagrangian analog to
the above mentioned weak Hamiltonian topology and seems to be the weakest possible
topology with respect to which one can prove the $C^0$-continuity of spectral copacity
$\gamma^{lag}$ which is stated in Theorem \ref{thm:continuity} below.
\item
The Lagrangianization $\Graph \phi_F^1$ of  Hamiltonian $F:[0,1] \times M \to \R$
with $\supp F \subset M \setminus B$, there exists an open neighborhood $U \supset B$
such that $\supp F \supset M \setminus U$. Therefore provided the $C^0$-distance of
$\overline d(\phi_F,id)=:\epsilon$ is so small that its
graph is contained in a Weinstein neighborhood of the diagonal, such a graph will automatically satisfy
$$
\phi_{\mathbb F}^1(o_{\Delta_B}) \subset T_\epsilon; \quad \mathbb F(t,{\bf x}) : = F(t,x), \, {\bf x} = (x,y)
$$
where $T_\epsilon$ is the $\epsilon$-neighborhood of $o_{\Delta_B}$ in $T^*\Delta$ and hence is
automatically contained in $\frak{Iso}_{o_{\Delta_B}}(o_\Delta,T^*\Delta)$.
\end{enumerate}
\end{rem}

\subsection{Lagrangian spectral invariants}
\label{subsec:lag-spectral}

For any given time-dependent Hamiltonian
$H = H(t,x)$, the classical action functional on the space
$$
\CP(T^*N) : = C^\infty([0,1],T^*N)
$$
is defined by
$$
\CA^{cl}_H(\gamma) = \int \gamma^*\theta - \int_0^1 H(t,\gamma(t))\, dt.
$$
We define the subset $\CP(T^*N;o_N)$ by
$$
\CP(T^*N;o_N) = \{\gamma:[0,1] \to T^*N \mid \gamma(0) \in o_N \}.
$$
The assignment $\gamma \mapsto \pi(\gamma(1))$ defines a fibration
$$
\CP(T^*N;o_N) \to o_N \cong N
$$
with fiber at $q \in N$ given by
$$
\CP(T^*N;o_N,T_q^*N):= \{\gamma:[0,1] \to T^*N \mid \gamma(0) \in o_N, \, \gamma(1) \in
T_q^*N \}.
$$
For given $x \in L_H$, we denote the Hamiltonian trajectory
$$
z_x^H(t) = \phi_H^t((\phi_H^1)^{-1}(x))
$$
which is a Hamiltonian trajectory such that, by definition,
\be\label{eq:zxHat0}
z_x^H(0) \in o_N, \quad z_x^H(1) = x.
\ee
We denote $L_H = \phi_{H}^1(o_N)$ and by $i_H:L_H \hookrightarrow T^*N$ the inclusion map.

Motivated by Weinstein's observation that the action functional
$$
\CA^{cl}_H: \CP(T^*N;o_N) \to \R
$$
can be interpreted as the canonical generating function of $L_H$, the present author
constructed a family of spectral invariants of $L_H$
by performing a mini-max theory via the chain level Floer homology theory in
\cite{oh:jdg,oh:cag}. Indeed, the function defined by
\be\label{eq:hH}
h_H(x) = \CA^{cl}_H(z_x^H)
\ee
is a canonical generating function of $L_H$ in that
\be\label{eq:i*theta}
i_H^*\theta = dh_H.
\ee
We call $h_H$ the \emph{basic generating
function} of $L_H$.  As a function on $N$, not on $L_H$, it
is a multi-valued function. Similarly, one may regard $N \to \phi_H^1(o_N)$ as
a multi-valued section of $T^*N$.

By considering the moduli space of
solutions of the perturbed Cauchy-Riemann equation
\be\label{eq:CRHJq}
\begin{cases}
\dudtau + J\left(\dudt - X_H(u) \right) = 0 \\
u(\tau,0), \, u(\tau,1) \in  o_N,
\end{cases}
\ee
and applying a chain-level Floer mini-max theory, the author \cite{oh:cag}
defined a homologically essential critical value, denoted by
$\rho(H;a)$ associated to each cohomology class $a \in H^*(N)$.
(A similar construction using the
generating function method was earlier given by Viterbo \cite{viterbo:generating}
and it is shown in \cite{milinko, milinko-oh} that both invariants
coincide \emph{modulo a normalization constant}.)
The number $\rho(H;a)$ depends on $H$, not just on $L_H = \phi_H^1(o_N)$.

\subsection{Statement of main results}
\label{subsec:mainresults}

We will be particularly interested in the two spectral invariants $\rho^{lag}(F;1)$,
$\rho^{lag}(F;[pt]^\#)$ and their difference
$\rho^{lag}(F;1) - \rho^{lag}(F;[pt]^\#)$.
This difference does not depend on the
choice of normalization mentioned above. Therefore  we can define a function
$$
\gamma^{lag}: \frak{Iso}(o_N;T^*N) \to \R
$$
unambiguously by setting
\be\label{eq:gammalag}
\gamma^{lag}(L;o_N): =\rho^{lag}(F;1) - \rho(F;[pt]^\#)
\ee
for $L = \phi_F^1(o_N)$. We call this function the spectral capacity
of $L$ (relative to the zero section $o_N$). (See \cite{viterbo:generating}, \cite{oh:cag}.)

We denote by $\gamma^{lag}_B$ the restriction of $\gamma^{lag}$ to the subset
$\frak{Iso}_B(o_N;T^*N)$.
The following Hamiltonian continuity result is the Lagrangian analog to
Corollary 1.2 of \cite{seyfad}.

\begin{thm}[Theorem \ref{thm:rhoC0conti}]\label{thm:continuity}
Let $N$ be a closed manifold. Then the function $\gamma^{lag}_B$ is continuous on $\frak{Iso}_B(o_N;T^*N)$
with respect to  the Hamiltonian $C^0$-topology defined above.
\end{thm}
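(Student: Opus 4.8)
The plan is to reduce the statement, via the triangle inequality for $\rho^{lag}$, to the case $L=o_N$ --- the vanishing assertion already announced in the introduction --- and then to prove that case using the geometry of the basic phase function. I use the following standard properties of $\rho^{lag}$, recalled in the preliminaries and established in \cite{oh:cag}: the pair-of-pants triangle inequality $\rho^{lag}(H\#G;a\cup b)\le\rho^{lag}(H;a)+\rho^{lag}(G;b)$; the sandwich bounds $\min_N f_H\le\rho^{lag}(H;[pt]^\#)\le\rho^{lag}(H;1)\le\max_N f_H$, so in particular $\gamma^{lag}(L_H)\le\osc(f_H)$; the inclusion $\Graph df_H\subset L_H$ on the open dense locus of $N\setminus\Sing(\sigma_H)$ where $f_H$ is differentiable; and the compatibility of all of these with the normalization map $\pi_\infty$. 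Fix now $L_i\to L$ in $\frak{Iso}_B(o_N;T^*N)$. By the direct limit definition of the topology we may work inside a single $\frak{Iso}_T^K(o_N;D^R(T^*N))$; writing $L=\phi_F^1(o_N)$ and unwinding Definitions \ref{defn:hamC0dist}--\ref{defn:hamC0onB}, we obtain $H_i\in\CP C_{R,K;T}^\infty$ with $L_i=\phi_{H_i}^1(L)$, hence $L_i=\phi_{H_i\#F}^1(o_N)$, and with $\phi_{H_i}^1\to id$ in the sense carried by $d^{ham}_{C^0}(L_i,L)\to0$. Fix a point $q_0$ in the interior $B^\circ$ (nonempty by hypothesis) and renormalize every Hamiltonian $G$ by subtracting $c_G:=-\int_0^1 G(t,0_{q_0})\,dt$ from $\rho^{lag}(G;\cdot)$; since $0_{q_0}$ is a common fixed point of the flows of $H_i$, $\overline{H_i}$, $F$ (see the next paragraph), these constants are additive under $\#$, so they cancel in each difference below.

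Applying the triangle inequality to $H_i\#F$ and to $\overline{H_i}\#(H_i\#F)=F$ (where $\overline{H_i}$ generates $(\phi_{H_i}^t)^{-1}$), and using the sandwich bounds, gives for $a\in\{1,[pt]^\#\}$
\[
|\rho^{lag}(H_i\#F;a)-\rho^{lag}(F;a)|\ \le\ \max\{\,|\widehat\rho^{lag}(H_i;1)|,\ |\widehat\rho^{lag}(\overline{H_i};1)|\,\}\ \le\ \max\{\,\|\widehat f_{H_i}\|_{C^0},\ \|\widehat f_{\overline{H_i}}\|_{C^0}\,\},
\]
where $\widehat\rho^{lag}$, $\widehat f$ are the renormalized quantities. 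Summing over $a=1,[pt]^\#$ bounds $|\gamma^{lag}(L_i)-\gamma^{lag}(L)|$, so it remains to prove the $L=o_N$ case: if $\phi_{K_i}^1\to id$ with $K_i\in\CP C_{R,K;T}^\infty$ (this covers $K_i=H_i$ and $K_i=\overline{H_i}$), then $\|\widehat f_{K_i}\|_{C^0}\to0$. Here the support hypotheses $(2)$--$(3)$ do the work. Since $\supp X_{K_i}$ is compact and disjoint from $o_B$, the point $0_{q_0}$ is a zero of $X_{K_i}$, hence a fixed point of $\phi_{K_i}^t$ for all $t$; and since $\phi_{K_i}^1\equiv id$ on the fixed neighborhood $T$ of $o_B$ one has $L_{K_i}\cap T=o_N\cap T$. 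Combined with the $C^0$-smallness of $L_{K_i}$ below, for $i$ large the only Hamiltonian chord of $K_i$ over $q_0$ is the constant chord $z^{K_i}_{0_{q_0}}\equiv 0_{q_0}$, so $f_{K_i}(q_0)=\CA^{cl}_{K_i}(z^{K_i}_{0_{q_0}})=c_{K_i}$; thus $\widehat f_{K_i}(q_0)=0$. On the other hand $\phi_{K_i}^1\to id$, unwound through Definitions \ref{defn:hamC0dist}--\ref{defn:hamC0onB}, forces $L_{K_i}\subset D^{\e_i}(T^*N)$ with $\e_i\to0$ --- this is the content of Lemma \ref{lem:(T,f)}. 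Since $\Graph df_{K_i}\subset L_{K_i}$ wherever $f_{K_i}$ is differentiable, $|df_{K_i}|\le\e_i$ a.e.; as $f_{K_i}$ is globally Lipschitz (being a graph selector) and $N$ is connected, $\mathrm{Lip}(f_{K_i})\le\e_i$, whence $\|\widehat f_{K_i}\|_{C^0}\le\e_i\,\diam(N)\to0$. (In particular $\gamma^{lag}(L_{K_i})\le\osc(f_{K_i})\le\e_i\,\diam(N)\to0$.) Feeding this back into the displayed estimate finishes the proof.

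The genuinely delicate point is Lemma \ref{lem:(T,f)}: the Hamiltonian $C^0$-topology on $\frak{Iso}_B(o_N;T^*N)$ is deliberately the weakest one compatible with the theorem, and a priori it only controls how much a representing Hamiltonian re-parametrizes the zero section; extracting from it the honest $C^0$-smallness $L_{K_i}\subset D^{\e_i}(T^*N)$ required by the Lipschitz estimate --- while keeping the fixed chamber $T$ over which $L_{K_i}$ agrees with $o_N$ --- is exactly where the unusual hypotheses $\supp X_{K_i}\subset D^R(T^*N)\setminus o_B$ and $\phi_{K_i}^1\equiv id$ on $T$ become indispensable (cf.\ Remark \ref{rem:support}); the theorem is false without them. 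Everything downstream --- the Lipschitz bound on $f_{K_i}$, the $B^\circ$-normalization bookkeeping, and the triangle-inequality reduction --- is comparatively routine once Lemma \ref{lem:(T,f)} and the properties of $\rho^{lag}$ recalled above are in hand.
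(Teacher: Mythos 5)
There is a fatal error at the heart of the argument. You invoke ``sandwich bounds'' in the form
$\min_N f_H\le\rho^{lag}(H;[pt]^\#)\le\rho^{lag}(H;1)\le\max_N f_H$, from which you conclude $\gamma^{lag}(L_H)\le\osc(f_H)$. But Theorem~\ref{thm:rhoversusfH} states the \emph{opposite} inequalities:
$$
\rho^{lag}(F;[pt]^\#)\ \le\ \min f_F, \qquad \max f_F\ \le\ \rho^{lag}(F;1),
$$
so the correct consequence is $\gamma^{lag}(L_H)\ge\osc(f_H)$, a \emph{lower} bound. (This is confirmed by the worked Example in Section~\ref{subsec:example}, where the paper computes $\rho(H;[pt]^\#)<\min f_H<\max f_H=\rho(H;1)$, and also by the intended role of $f_H$ as a graph selector: the spectral invariants dominate the oscillation of any selector, never the reverse.) With the inequality corrected, your final line $\gamma^{lag}(L_{K_i})\le\osc(f_{K_i})\le\e_i\diam(N)\to0$ collapses --- the upper bound on $\osc(f_{K_i})$ from the Lipschitz estimate gives no information about $\gamma^{lag}(L_{K_i})$. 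The same problem invalidates the displayed inequality $|\widehat\rho^{lag}(K_i;1)|\le\|\widehat f_{K_i}\|_{C^0}$: the sandwich gives $\widehat\rho^{lag}(K_i;1)\ge\max\widehat f_{K_i}$, which is useless for an absolute-value bound. Indeed, if your version of the sandwich were true, the Lipschitz argument (which is correct as far as it goes: $\Graph df_{K_i}\subset L_{K_i}\subset D^{\e_i}(T^*N)$ a.e.\ implies $\osc(f_{K_i})\le\e_i\diam N$) would prove $\gamma^{lag}$ continuous at $o_N$ in the Hausdorff topology with no support hypothesis at all, answering Question~\ref{ques:ham-continuity} (Viterbo's conjecture) affirmatively --- a clear sign the step is too strong.

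Two lesser points. First, you assert that $L_{K_i}\subset D^{\e_i}(T^*N)$ with $\e_i\to0$ ``is the content of Lemma~\ref{lem:(T,f)}''; it is not. That lemma says that under the support hypotheses and $\osc_{C^0}(\phi_H^1;o_N)<C_{(f;B,T)}$ one has $L_f\cap o_N=\phi_H^1(L_f)\cap o_N$ (and constancy of all relevant Hamiltonian chords of $H\#(f\circ\pi)$); the smallness of $L_{K_i}$ is a direct (and elementary) consequence of the definition of $\osc_{C^0}$ and the Hausdorff-distance inequality, not of the lemma. Second, and more importantly, you have missed the actual mechanism of the paper's proof. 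Because $f_H$ only bounds $\gamma^{lag}$ from below, the paper does \emph{not} estimate $\gamma^{lag}$ via $f_H$ at all. Instead, it produces an \emph{upper} bound by the $\e$-shifting argument: fix a Morse function $f$ with $\Crit f\subset\Int B$ and $\Graph df\subset T$; show (Lemma~\ref{lem:(T,f)}) that when $\osc_{C^0}(\phi_H^1;o_N)<C_{(f;B,T)}$ all Hamiltonian chords of $H\#(f\circ\pi)$ are constant; deduce (Lemma~\ref{lem:hLs=hLf}) that $\rho^{lag}(H\#f;1)-\rho^{lag}(H\#f;[pt]^\#)\le\osc f$ by comparing action values of constant chords; and then peel $f$ off via the triangle inequality to get $\gamma^{lag}_B(L_H)\le 2\osc f$ (Proposition~\ref{prop:rhoLHoscf}). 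Scaling $f\mapsto\lambda f$ with $\lambda=\osc_{C^0}(\phi_H^1;o_N)/C_{(f;B,T)}$ and using the conformality Lemma~\ref{lem:conformal} yields the quantitative Theorem~\ref{thm:capacity}, from which continuity follows. Your reduction to the case $L=o_N$ via the triangle inequality is in the right spirit, but the core ingredient you need is this shifting argument, not the basic-phase-function bound.
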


The following is a very interesting open question
on the Hamiltonian $C^0$-topology.

\begin{ques}\label{ques:ham-continuity} Is the full function $\gamma^{lag}: \frak{Iso}(o_N;T^*N) \to \R$
continuous (without restricting to $\frak{Iso}_B(o_N;T^*N)$ with $B$ having non-empty interior)?
\end{ques}

The question seems to be an important matter to understand
in $C^0$ symplectic topology. Indeed the affirmative answer to the
question is a key ingredient in relation to Viterbo's symplectic homogenization program \cite{viterbo:homogenize}.
The quesiton is sometimes called Viterbo's conjecture. We refer to Theorem \ref{thm:capacity} for the more precise statement on the
relationship between the Hamiltonian $C^0$-distance $d^{ham}_{C^0}$
and the spectral capacity $\gamma^{lag}_B(\phi_F^1(o_N))$ and
the support conditions (2), (3) of the Hamiltonian path $\phi_{F}$ given in
Definition \ref{defn:hamC0onB}.

To properly handle the individual number $\rho^{lag}(F;1)$, not just the difference of
$\rho^{lag}(F;1)$ and $\rho^{lag}(F;[pt]^\#)$, and relate it to
the Lagrangian submanifold $L_F = \phi_F^1(o_N)$ itself, not to the function $F$, we need to put an
additional normalization condition relative to $L_F$. In this regard, it is
useful to take the point of view of weighted Lagrangian submanifolds $(L,\rho_N)$
introduced in \cite{alan:isodrast}, where $\rho_N$ is a probability density on $N$.
Using this $\rho_N$, we can put a normalization condition with respect to the
chosen measure
which is the Lagrangian analog to the mean-normalization of Hamiltonians
$$
\int_M F(t,x)\, \omega^n = 0.
$$
The next result concerns an enhancement of the construction of
basic phase function $f_H$ carried out in \cite{oh:jdg} in the level of topological
Lagrangian embedding. This is a graph selector constructed via Lagrangian Floer
homology. Then the map $\sigma_F: N\setminus \Sing(\sigma_F) \to T^*N$ defined by
$$
\sigma_F(q) : = df_F(q)
$$
selects a single valued branch of $\phi_F^1(o_N)$ on the open subset
$N\setminus \Sing(\sigma_F)$ of full measure when we regard $\phi_F^1(o_N)$ as a
multi-valued section $N \to T^*N$. We call $\sigma_F$
basic Lagrangian selector of $\phi_F^1(o_N)$.
In turn the pair $(\sigma_F,f_F)$ selects a
single valued branch of the wave front of $\phi_F^1(o_N)$ which lies in the
one-jet space $J^1(N) \cong T^*N \times \R$.

\begin{thm}[Corollary \ref{cor:fHiconv}]
Suppose $\phi_{F_i} \to \phi_F$ in the weak Hamiltonian topology given in
Definition \ref{defn:hamtopology} and $L_i = \phi_{F_i}^1(o_N)$.
Then $(\sigma_{F_i},f_{F_i})$ converges uniformly in $J^1(N)$, whose limit
defines a single-valued continuous section of $J^1(M)$ on $N \setminus \Sing(\sigma_F)$.
\end{thm}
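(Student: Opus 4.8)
The plan is to reduce the convergence of the one-jet sections $(\sigma_{F_i}, f_{F_i})$ to a combination of (a) uniform convergence of the basic phase functions $f_{F_i}$ and (b) the fact that the derivative $df_{F_i} = \sigma_{F_i}$ automatically tracks $f_{F_i}$ wherever it exists. The core of the argument is therefore establishing that $f_{F_i} \to f_F$ in $C^0(N)$ whenever $\phi_{F_i} \to \phi_F$ in the weak Hamiltonian topology of Definition \ref{defn:hamtopology}; the statement about $\sigma_{F_i}$ and the limiting section of $J^1(N)$ should then follow more or less formally.

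First I would recall the construction of $f_H$ from \cite{oh:jdg}: the basic phase function is built from the Lagrangian spectral numbers by a min-max / graph-selector procedure, and it is obtained as a suitable selection among the branch values of the multi-valued generating function $h_H$ of $L_H = \phi_H^1(o_N)$. The key structural input is that $f_H$ can be expressed pointwise in terms of the partial spectral invariants $\rho^{lag}(H; a; q)$ associated to the fiberwise restricted action functional $\CA^{cl}_H$ on $\CP(T^*N; o_N, T^*_qN)$, and these are Lipschitz (indeed, the family $q \mapsto f_H(q)$ is continuous, even locally Lipschitz on the regular set). Next I would invoke (or re-prove in this relative Lagrangian setting) the standard Hofer-type estimate for spectral invariants: $|\rho^{lag}(F_i; a; q) - \rho^{lag}(F; a; q)|$ is controlled by $\|F_i \# \overline{F}\| = \operatorname{leng}(\phi_{F_i}^{-1}\phi_F)$, i.e. by the Hofer length of the concatenated path. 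Since weak Hamiltonian convergence $\phi_{F_i} \to \phi_F$ means precisely that $\overline d(\phi_{F_i}(1), \phi_F(1)) + \operatorname{leng}(\phi_{F_i}^{-1}\phi_F) \to 0$, the Hofer-length term forces $\rho^{lag}(F_i; a; q) \to \rho^{lag}(F; a; q)$, and because these estimates are uniform in $q \in N$, one gets $f_{F_i} \to f_F$ uniformly on $N$. (One has to be a little careful that the graph-selector value is a genuine critical value of $\CA^{cl}_{F_i}$ at each $q$, so that the spectrality and the Hofer estimate apply; this is where the construction in \cite{oh:jdg} and the compactness of $N$ enter.)

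Having $f_{F_i} \to f_F$ uniformly, I would then argue for the $J^1$-statement as follows. Off $\Sing(\sigma_F)$ the limit function $f_F$ is differentiable and $\sigma_F = df_F$ selects a single-valued branch of $\phi_F^1(o_N)$; near such a point the Lagrangian $\phi_F^1(o_N)$ is locally a genuine graph over $N$ (this is essentially the defining property of the regular locus), and by the Hamiltonian $C^0$-closeness together with the weak convergence, $\phi_{F_i}^1(o_N)$ is eventually also a graph there, with $\sigma_{F_i} = df_{F_i}$ on that neighborhood given by the corresponding smooth branch. Uniform $C^0$-convergence of $f_{F_i}$ together with the fact that each $df_{F_i}$ is the unique branch of a fixed-size family of smooth Lagrangian graphs $C^1$-close to $df_F$ then upgrades the convergence to $C^1_{loc}$ on $N \setminus \Sing(\sigma_F)$, hence $(\sigma_{F_i}, f_{F_i}) \to (\sigma_F, f_F)$ there and, by the uniform bound $\sup_N |f_{F_i}|$ plus $\sup_N |df_{F_i}|$ (using $\supp X_{F_i} \subset D^R(T^*N)$), uniformly in $J^1(N)$ after passing to the limiting section. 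The limit section is single-valued and continuous on $N \setminus \Sing(\sigma_F)$ because it agrees with $(df_F, f_F)$ there.

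The main obstacle I anticipate is the passage from $C^0$-convergence of the scalar functions $f_{F_i}$ to convergence of the one-jets $(\sigma_{F_i}, f_{F_i})$, i.e. controlling the derivatives $\sigma_{F_i} = df_{F_i}$. Uniform convergence of $f_{F_i}$ alone does not control $df_{F_i}$; one genuinely needs that the relevant branch of $\phi_{F_i}^1(o_N)$ over a neighborhood of a regular point of $f_F$ is forced (by weak Hamiltonian convergence and the graph-selector property) to be the branch $C^1$-close to the branch selected by $\sigma_F$, and that no ``branch jumping'' occurs in the limit on the regular set. Making this rigorous requires combining the Hamiltonian $C^0$-closeness of $\phi_{F_i}^1(o_N)$ to $\phi_F^1(o_N)$ (which localizes the possible position of $L_{F_i}$) with a quantitative statement that the min-max value $f_{F_i}(q)$ continues to pick out the branch near $f_F(q)$, not a different sheet; the compactness of $N$ and the explicit structure of $\Sing(\sigma_F)$ (codimension one, via the structure theorem quoted in the introduction) are what let this be done uniformly on compact subsets of the regular locus.
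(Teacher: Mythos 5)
Your core argument — the uniform convergence of $f_{F_i}$ — takes essentially the same route as the paper. The paper's Corollary~\ref{cor:fHiconv} is presented as an immediate consequence of the Lipschitz estimate $\|f_H - f_K\|_\infty \leq \|H - K\|$ from Theorem~\ref{thm:ohcag} (eq.~\eqref{eq:fH}); your rederivation of this from the pointwise formula $f_H(q) = \rho^{lag}(H;\{q\})$ together with Hofer continuity of the spectral numbers is a restatement of that same estimate, since $\|f_H - f_K\|_\infty = \sup_q |\rho^{lag}(H;\{q\}) - \rho^{lag}(K;\{q\})|$. One small divergence: the body text of the paper hypothesizes $L^{(1,\infty)}$ convergence $\|H_i - H\|\to 0$, under which \eqref{eq:fH} applies directly, while you phrase the Hofer estimate in terms of $\operatorname{leng}(\phi_{F_i}^{-1}\phi_F)$, which is what the weak Hamiltonian topology of Definition~\ref{defn:hamtopology} actually controls. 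That version of the estimate is not explicitly quoted in the paper, but it does follow from reparameterization invariance (Lemma~\ref{lem:Hchi}) together with the triangle inequality (Propositions~\ref{prop:oh-triangle}, \ref{prop:Lag-triangle}); you should note that this requires a short extra argument rather than treating the two norms as interchangeable.

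For the $J^1(N)$ claim about $(\sigma_{F_i},f_{F_i})$, you are right to flag this as the non-trivial part: uniform $C^0$ convergence of $f_{F_i}$ does not by itself control $df_{F_i}=\sigma_{F_i}$. The paper does not in fact supply a separate argument here; the Corollary as actually stated in Section~\ref{sec:basic} claims only uniform convergence of $f_{H_i}$, and the $J^1$ formulation appears only in the introduction. Your proposed branch-tracking mechanism — Hausdorff convergence $L_{F_i}\to L_F$ from the $C^0$-part of $d_{ham}$, and then the observation that at a regular $q$ the branches of $L_F$ over $q$ have distinct values of the basic generating function $h_F$, so $f_{F_i}(q)\to f_F(q)$ forces $\sigma_{F_i}(q)$ onto the sheet converging to $\sigma_F(q)$ rather than a different branch — is a reasonable way to close this gap, and to be fully rigorous one should also quantify, uniformly on compact subsets of the regular locus, the separation between the branch values of $h_F$ (this is available by compactness). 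This is genuinely extra content compared to what the paper writes down, though it is probably the argument the paper implicitly has in mind.
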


Here we define
$$
\Sing(\sigma_F) : = \{q \in N \mid f_F \, \text{ is not differentiable at $q$ }\}
$$
and call it the singular locus of $f_F$. It follows from definition that
$\Sing(\sigma_F)$ is a subset of the so called \emph{Maxwell set} of the
Lagrangian projection $\phi_F^1(o_N) \to N$. (See \cite{givental:singular,arnold:wavefront,zak-roberts}
for detailed study of the Maxwell set.)

We first note that for a generic choice of $F$, $\Sing(\sigma_F)$ is decomposed into
the union of smooth manifolds
$$
\Sing(\sigma_F) = \bigcup_{k=1}^n S_k(\sigma_F)
$$
where $S_k(\sigma_F)$ is the stratum of codimension $k$ in $N$. Along each
connected component of the codimension one strata $S_1(\sigma_F)$, $\Sigma_F$
has two branches. We denote by $f_F^\pm$ the restrictions of $f_F$ in a
neighborhood of the component in each branch respectively.

The next theorem concerns the structure of $\Sing(\sigma_F)$ in the micro-local level.

\begin{thm}[Theorem \ref{thm:conormal}] Let $q \in S_1(F)$.
Then
$$
df^-_{F}(q) - df^+_{F}(q) \in T_q^*N,
$$
which is contained in the conormal space $\nu_q^*[S_1(\sigma_{F});N] \subset T_q^*N$.
\end{thm}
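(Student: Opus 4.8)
The plan is to work entirely in a local model near $q \in S_1(\sigma_F)$. Since the basic phase function $f_F$ is built from the action functional $\CA^{cl}_H$ viewed as a generating function of $L_F = \phi_F^1(o_N)$, and since $i_F^*\theta = dh_F$ on $L_F$, each single-valued branch $f_F^\pm$ defined near the codimension-one stratum satisfies $df_F^\pm(q') = $ (the $p$-coordinate of the corresponding point of $L_F$ over $q'$). So the two covectors $df_F^+(q)$ and $df_F^-(q)$ are precisely the $p$-coordinates of the two points $\ell^+, \ell^- \in L_F$ lying over $q$ that come together as $q'$ approaches $S_1(\sigma_F)$. The claim is then a statement purely about $L_F$: the difference $p(\ell^-) - p(\ell^+) \in T_q^*N$ annihilates $T_q S_1(\sigma_F)$.

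First I would fix a smooth curve $c(s)$ in $S_1(\sigma_F)$ with $c(0) = q$ and $c'(0) = v \in T_q S_1(\sigma_F)$, and along it choose the two smooth branches $q' \mapsto f_F^\pm(q')$; on $S_1(\sigma_F)$ these two branches have equal values (this is the defining property of $\Sing(\sigma_F) \subset$ Maxwell set: the two action values that compete in the mini-max agree there — this is where the graph-selector construction of \cite{oh:jdg} and the convergence/structure results invoked above are used to guarantee $f_F^+ = f_F^-$ on the stratum, not merely that $f_F$ fails to be differentiable). Hence the function $s \mapsto f_F^+(c(s)) - f_F^-(c(s))$ vanishes identically, so its derivative at $s=0$ vanishes:
\be
0 = \frac{d}{ds}\Big|_{s=0}\big(f_F^+(c(s)) - f_F^-(c(s))\big) = \big(df_F^+(q) - df_F^-(q)\big)(v).
\ee
Since $v \in T_q S_1(\sigma_F)$ was arbitrary, $df_F^-(q) - df_F^+(q)$ kills $T_q S_1(\sigma_F)$, i.e. it lies in the conormal space $\nu_q^*[S_1(\sigma_F);N]$. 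That it lies in $T_q^*N$ is immediate since both $df_F^\pm(q)$ do.

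The main obstacle is the first point: justifying that along the genuine codimension-one stratum $S_1(\sigma_F)$ the two local branches of $f_F$ actually \emph{agree in value}, so that the difference function is identically zero and not merely continuous. For a generic $F$ the codimension-one part of the Maxwell set is exactly the locus where two branches of the front have equal height and distinct slopes (the classical picture of a "cliff"), but one must confirm that the graph selector $f_F$ — which at each point is defined as a specific homologically essential mini-max value — selects, on a neighborhood of the stratum, precisely these two competing branches and that the selected value is continuous across the wall. This is where I would invoke the structure of $\Sigma_F = \Graph df_F$ over $N \setminus \Sing(\sigma_F)$ together with the uniform-convergence/continuity statement for $(\sigma_F, f_F)$ already established (Corollary \ref{cor:fHiconv}), and the generic stratification $\Sing(\sigma_F) = \bigcup_k S_k(\sigma_F)$; continuity of $f_F$ forces the one-sided limits of $f_F^+$ and $f_F^-$ onto $S_1(\sigma_F)$ to coincide, and smoothness of each branch up to the wall then upgrades "equal limits" to "the difference is a smooth function vanishing on $S_1(\sigma_F)$", which is all that the derivative computation above needs. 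Once that is in place the conormal conclusion is a one-line differentiation.
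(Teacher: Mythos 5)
Your proof takes essentially the same approach as the paper: both hinge on the observation that continuity of $f_F$ forces the two branches $f_F^\pm$ to agree in value along $S_1(\sigma_F)$, and that each branch is smooth up to the wall, so the tangential derivative of $f_F^+ - f_F^-$ along the stratum vanishes. The paper executes the differentiation a bit more cautiously --- via normal-exponential shifted curves $\gamma_\delta^\pm$ and a double limit in $\delta$ and $t$ rather than differentiating directly on the stratum --- but the ingredients and the conclusion are identical to yours.
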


In dimension 2, a complete description of generic singularities of the Lagrangian projection
is available (see  \cite{givental:singular,arnold:wavefront,zak-roberts} for the precise
statement). Based on this generic description of the singulariies, we can
precisely define the notion of \emph{cliff-wall surgery} in dimension 2, which replaces the
multi-valued graph $\phi_F^1(o_N)$ by a rectifiable Lagrangian cycle.
A finer structure theorem is needed to perform similar
surgery in higher dimension which will be studied elsewhere.
It appears to the author that these results seem to carry
some significance in relation to $C^0$-symplectic topology and Hamiltonian dynamics,
which may be worthwhile to pursue further in the future.

Finally we prove the following inequality between the basic phase
function and the Lagrangian spectral invariants.
The inequality stated in this theorem is closely related to
Proposition 5.1 of \cite{viterbo:generating}, whose statement
and proof were formulated in terms of the generating function.

\begin{thm}[Theorem \ref{thm:rhoversusfH}] For any Hamiltonian $F = F(t,x)$, we have
$$
\rho^{lag}(F;[pt]^\#) \leq \min f_F, \quad \max f_F \leq \rho^{lag}(F;1).
$$
\end{thm}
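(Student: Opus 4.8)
The plan is to compare the mini-max values $\rho^{lag}(F;1)$ and $\rho^{lag}(F;[pt]^\#)$ against the values of $f_F$, exploiting the fact that $f_F$ is itself built from the same action functional $\CA^{cl}_F$ via a graph-selector (mini-max over sublevel sets of $\CA^{cl}_F$ restricted to the fiber $\CP(T^*N;o_N,T_q^*N)$), whereas $\rho^{lag}(F;a)$ is a mini-max over a cohomology class $a \in H^*(N)$ on the full path space $\CP(T^*N;o_N)$. The key observation is that the basic phase function satisfies $f_F(q) = \CA^{cl}_F(z_{x_q}^F)$ for the critical point $x_q \in L_F \cap T_q^*N$ selected by the graph selector at $q$, so $f_F(q)$ is an \emph{action value} of a genuine Hamiltonian trajectory; thus every value of $f_F$ lies in the action spectrum $\operatorname{Spec}(F;o_N)$, and $\rho^{lag}(F;a) \in \operatorname{Spec}(F;o_N)$ as well. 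The inequalities then reduce to a matching statement between the fiberwise mini-max defining $f_F$ and the total-space mini-max defining $\rho^{lag}$.

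First I would recall the precise construction of $f_F$ from \cite{oh:jdg}: for each $q$, one considers the Floer complex of the pair $(o_N, T_q^*N)$ with Hamiltonian $F$, and $f_F(q)$ is the spectral number $\rho(F;1_q)$ associated to the fundamental class of the (contractible, hence homologically trivial-looking but suitably normalized) fiber problem — concretely, $f_F(q) = \inf\{\lambda \mid [point] \in \operatorname{Image}(H_*(\CA_F^{cl} \leq \lambda \text{ on the fiber over } q))\}$ up to the sign/normalization conventions. The two halves of the theorem are then treated dually: for $\max f_F \le \rho^{lag}(F;1)$, I would argue that the fundamental class $1 \in H^*(N)$, when pushed into the fiber over any $q$ by restriction/intersection with $[T_q^*N]$, cannot be represented below level $\rho^{lag}(F;1)$ — i.e. the fiberwise mini-max value $f_F(q)$ is bounded above by the ambient one, using the naturality of spectral invariants under the restriction map $H^*(N) \to H^*(pt)$ and a monotonicity/"triangle inequality" argument (this is exactly the structure of Proposition 5.1 in \cite{viterbo:generating}, transported to the Floer-theoretic setting). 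For $\rho^{lag}(F;[pt]^\#) \le \min f_F$, I would dualize: the point class $[pt]^\#$ localizes to a single fiber, and its mini-max value is $\le f_F(q)$ for the relevant $q$, hence $\le \min_q f_F(q)$; equivalently, use Poincaré duality on $N$ relating $\rho^{lag}(F;1)$ and $\rho^{lag}(\overline F;[pt]^\#)$ together with the symmetry $\min f_{\overline F} = -\max f_F$ to deduce the second inequality from the first applied to the inverse Hamiltonian $\overline F(t,x) = -F(t,\phi_F^t(x))$.

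The technical engine in both cases is a \emph{localization} or \emph{naturality} statement for the chain-level mini-max: the continuation/PSS-type morphisms relating the Floer homology of $(o_N, o_N)$ (computing $\rho^{lag}(F;\cdot)$) to the Floer homology of $(o_N, T_q^*N)$ (computing $f_F(q)$) are filtered, i.e. they do not decrease (resp. increase) the action filtration level beyond controlled bounds, because both are modeled on the \emph{same} action functional $\CA_F^{cl}$ — the only change is the boundary condition at $\tau=1$, and the relevant cobordism (a "fiber-sweeping" homotopy) has action-changing defect controlled by the oscillation along the homotopy, which here is zero since we are moving the Lagrangian boundary condition within the fibers of a fixed function. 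I expect the main obstacle to be making this filtered naturality precise at the chain level: one must exhibit the comparison morphism concretely (likely via a one-parameter family of Lagrangian boundary conditions interpolating between $o_N$ and $T_q^*N$, or via the known isomorphism $HF(o_N, T_q^*N; F) \cong H_*(pt)$ and the fact that this isomorphism is realized by moduli spaces whose energy is bounded by action differences), and verify that it sends the relevant generator of the ambient complex (representing $1$, resp. $[pt]^\#$) to the fiber generator without crossing the critical level — equivalently, showing no "action jump" occurs. Once this filtered comparison is in hand, taking the supremum (resp. infimum) over $q \in N$ and invoking continuity of $q \mapsto f_F(q)$ off $\Sing(\sigma_F)$ together with the spectrality of both invariants finishes the argument. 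I would also double-check the normalization constants: since $f_F$ is defined via $\CA_F^{cl}$ without the extra $\rho_N$-normalization, while $\rho^{lag}(F;\cdot)$ is the unnormalized Floer-theoretic invariant, the two are directly comparable and no correction term intervenes — this is where the hypothesis that we do \emph{not} normalize $F$ (emphasized in \S\ref{subsec:top-flows}) is used.
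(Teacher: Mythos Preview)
Your high-level picture is correct: both $f_F(q)=\rho^{lag}(F;\{q\})$ and $\rho^{lag}(F;a)$ are mini-max values of the same action functional, and the inequalities come from a filtered chain-level comparison between the complexes $CF(L_F,o_N)$ and $CF(L_F,T_q^*N)$. But two concrete steps in your plan do not work as written.

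First, the mechanism you propose for the filtered comparison --- a continuation/PSS-type map, or ``a one-parameter family of Lagrangian boundary conditions interpolating between $o_N$ and $T_q^*N$'' --- is not the right one. You cannot deform $o_N$ to a cotangent fiber through Lagrangians (they have different topology, and one is noncompact), and a continuation map between these two Floer complexes is not what relates the classes $1$ and $[q]$. The paper instead uses the \emph{triangle product}
\[
\mathfrak m_2: CF(L_F,o_N)\otimes CF(o_N,T_q^*N)\to CF(L_F,T_q^*N),
\]
and proves the energy identity $\int w^*\omega_0 = \CA^{(1)}(c_{x_1})+\CA^{(2)}(c_{x_2})-\CA^{(0)}(c_{x_0})$ for holomorphic triangles, which shows $\mathfrak m_2$ is filtered. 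Since $CF(o_N,T_q^*N)$ has a single generator $\{q\}$ at level $0$, pairing any Floer cycle $\alpha$ representing $1^\flat=[N]$ with $\{q\}$ produces a cycle in $CF(L_F,T_q^*N)$ representing $[q]$ at level $\leq \mathfrak v(\alpha)$; hence $f_F(q)\leq \mathfrak v(\alpha)$ for all such $\alpha$, giving $f_F(q)\leq\rho^{lag}(F;1)$. This is precisely the ``triangle inequality argument'' you allude to, but the point is that it is a \emph{product}, not a continuation map.

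Second, your proposed duality for the other inequality is wrong. You suggest deducing $\rho^{lag}(F;[pt]^\#)\leq\min f_F$ from the first inequality applied to $\overline F(t,x)=-F(t,\phi_F^t(x))$ via ``$\min f_{\overline F}=-\max f_F$''. But $\phi_{\overline F}^1(o_N)=(\phi_F^1)^{-1}(o_N)$ is a different Lagrangian from $\phi_F^1(o_N)$, so $f_{\overline F}$ is a graph selector for the wrong object and no such identity holds. The paper instead uses the anti-symplectic reflection $\mathfrak r(q,p)=(q,-p)$ and the Hamiltonian $F^{\mathfrak r}(t,x)=-F(t,\mathfrak r(x))$: since $\mathfrak r$ fixes both $o_N$ and every fiber $T_q^*N$, one obtains genuine identities $f_{F^{\mathfrak r}}=-f_F$ and $\rho^{lag}(F^{\mathfrak r};1)=-\rho^{lag}(F;[pt]^\#)$, from which the first inequality follows from the second applied to $F^{\mathfrak r}$.
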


The proof of the second inequality uses a judicious usage of
the triangle product in Lagrangian Floer homology \cite{oh:cag,seidel:triangle,fooo:book}
after a careful consideration of
normalization problem in section \ref{subsec:assign}. We would like to emphasize that the issue of normalization
problem concerning $\rho^{lag}(F;1)$ is a delicate one when one would like to
regard $\rho^{lag}(F;1)$ as an invariant attached to the Lagrangian submanifold itself,
not just to the Hamiltonian $F$. Once the second inequality is established, the first
one easily follows from this and the behavior of spectral invariants $\rho^{lag}(\cdot;\{q\})$
under the duality map $F \mapsto F^\frak r(t,x) = - F(t,\frak r(x))$ induced by
the anti-symplectic reflection $\frak r: T^*N \to T^*N, \, \frak r(q,p) = (q, -p)$ for $x =(q,p)$ similarly as done in
\cite{oh:cag} for the duality map $F \mapsto \widetilde F(t,x) = - F(1-t,x)$.
(We thank Seyfaddini for pointing out to us \cite{seyfad:e-mail} that the first inequality should
also hold in the presence of the second inequality in Theorem \ref{thm:rhoversusfH}.)
See also \cite{viterbo:generating} for the similar consideration of this reflection map in the
context of generating function techniques.

The research performed in this paper is partially motivated by the study of
topological Hamiltonian dynamics and its applications to the problem of simpleness
question on the area-preserving homeomorphism group of the 2-disc. We anticipate
that these
studies play some important role in the study of homotopy invariance of Hamiltonian spectral
invariant function $\phi_F \mapsto \rho(F;a)$ for a topological Hamiltonian path $\phi_F$
in the sense of \cite{oh:hameo1,oh:hameo2} on any closed symplectic manifolds $(M,\omega)$.
It should also be regarded as
a natural continuation of the author's study of Lagrangian spectral invariants
performed in \cite{oh:jdg,oh:cag}.

We thank F. Zapolsky for attracting our attention to
the preprint \cite{MVZ} from which we have learned the Lagrangian version of
the optimal triangle inequality, and S. Seyfaddini for sending us his
very interesting preprint \cite{seyfad} before its publication, which greatly helps us
in proving the Hamiltonian continuity of Lagrangian spectral capacity.
We also thank A. Givental for many enlightening e-mail
communications concerning the structure of Maxwell set,
Proposition \ref{prop:zak-roberts}  and the cliff-wall surgery.
\bigskip

\centerline{\bf Notations and Conventions}
\par\smallskip

We follow the conventions
of \cite{oh:alan,oh:hameo2} for the definition of
Hamiltonian vector fields and action functional,
and others appearing in the Hamiltonian Floer theory and in the construction of spectral
invariants on general closed symplectic manifold. They are
different from e.g., those used in \cite{pol:book,entov-pol:morphism}
one way or the other, but coincide with those used in
\cite{seyfad}.

\begin{enumerate}
\item We usually use the letter $M$ to denote a symplectic manifold and $N$
to denote a general smooth manifold.
\item The Hamiltonian vector field $X_H$ is defined by
$dH = \omega(X_H,\cdot)$.
\item The flow of $X_H$ is denoted by $\phi_H: t \mapsto \phi_H^t$ and
its time-one map by $\phi_H^1 \in Ham(M,\omega)$.
\item We denote by $z^q_H(t) = \phi_H^t(q)$ the Hamiltonian trajectory
associated to the initial point $q$.
\item We denote by $z_x^H(t) = \phi_H^t((\phi_H^1)^{-1}(x))$ the Hamiltonian trajectory
associated to the final point $x$.
\item $\overline H(t,x) = -H(t, \phi_H^t(x))$ is the Hamiltonian generating
the inverse path $(\phi_H^t)^{-1}$.
\item The canonical symplectic form on the cotangent bundle $T^*N$ is denoted by $\omega_0 = - d\theta$
where $\theta$ is the Liouville one-form which is given by $\theta = \sum_i p_i \, dq^i$
in the canonical coordinates $(q^1,\cdots, q^n, p_1, \cdots, p_n)$.
\item The classical Hamilton's action functional on the space of paths in $T^*N$ is given by
$$
\CA^{cl}_H(\gamma) = \int \gamma^*\theta - \int_0^1 H(t,\gamma(t))\, dt.
$$
\item We denote by $o_N$ the zero section of $T^*N$.
\item We denote $\rho^{lag}(H;a)$ the Lagrangian spectral invariant on $T^*N$
(relative to the zero section $o_N$) defined in \cite{oh:jdg} for asymptotically
constant Hamiltonian $H$ on $T^*N$.
\item We denote by $f_H$ the basic phase function and its associated Lagrangian
selector by $\sigma_H: N \to T^*N$ given by $\sigma_H(q) = df_H(q)$ at which $df_H(q)$
exists.
\end{enumerate}

\section{Basic generating function $h_H$ of Lagrangian submanifold}
\label{sec:generating}

In this section, we recall the definition of basic generating function.

Let $H = H(t,x)$ be a Hamiltonian on $T^*N$ which is asymptotically
constant i.e., one whose  Hamiltonian vector field $X_H$ is compactly supported.
Denote by $\CP C^\infty_{asc}(T^*N,\R)$ be the set of such a family of functions.
We denote $L_H = \phi_{H}^1(o_N)$ and
denote by $i_H:L_H \hookrightarrow T^*N$ the inclusion map.

Recall the classical action functional is defined as
$$
\CA^{cl}_H(\gamma) = \int \gamma^*\theta - \int_0^1 H(t,\gamma(t))\, dt
$$
on the space $\CP(T^*N)$ of paths $\gamma:[0,1] \to T^*N$, and its first variation formula
is given by
\be\label{eq:1stvariation}
d\CA^{cl}_H(\gamma)(\xi) = \int_0^1 \omega(\dot\gamma - X_H(t,\gamma(t)), \xi(t))\,dt
- \langle \theta(\gamma(0)),\xi(0) \rangle + \langle \theta(\gamma(1)), \xi(1) \rangle.
\ee
For given $q \in o_N \cong N$, we denote
$$
z_H^q(t) = \phi_H^t(q)
$$
which is a Hamiltonian trajectory such that
\be\label{eq:zqHat0}
z_H^q(0) = q \in o_N,
\ee
which specifies the initial point $q \in o_N$.
(We remark that the notation here is slightly different from that of
\cite{oh:jdg,oh:cag} in that $z_H^q$ therein denotes $z^H_q$ in this paper.
We adopt the current notation to be consistent with that of \cite{oh:book} and
other recent papers of the author.)

We define the function $\widetilde h_H: [0,1] \times N \to \R$ by
\be\label{eq:tildehH}
\widetilde h_H(t,q) = \int \left(z_H^q|_{[0,t]}\right)^*\theta - \int_0^t H(u,\phi_H^u(q))\, du
\ee
call it the space-time (or parametric) basic generating function in the fixed frame.

The following basic lemma follows immediately from \eqref{eq:1stvariation} whose proof we
omit.

\begin{lem}\label{lem:generating}
The function $\widetilde h_H$ satisfies
\bea
d\widetilde h_H(t,q) & = & \left((z_H^q)^*\theta(t) - H(t, z_H^q(t))\, dt\right) +
(\psi_H^t)^*\theta \label{eq:dtildeh}\\
& = & \psi_H^*\theta - H(t, z_H^q(t))\, dt \label{eq:dtildeh2}
\eea
where $\psi_H: [0,1] \times N \to T^*N$ defined by $\psi_H(t,q) =
\phi_H^t|_{o_N}$ and $\psi_H^t(q) = \psi_H(t,q)$.
\end{lem}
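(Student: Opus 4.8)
The plan is to compute the differential $d\widetilde{h}_H$ of the function $\widetilde{h}_H$ defined in \eqref{eq:tildehH} by splitting it into its $dt$-component and its $N$-component, i.e.\ by writing $d\widetilde h_H(t,q) = \frac{\partial \widetilde h_H}{\partial t}(t,q)\, dt + d_q\widetilde h_H(t,q)$, where $d_q$ denotes the exterior derivative in the $q$-variable with $t$ held fixed. Since $(t,q) \mapsto \phi_H^t(q)$ is smooth, differentiation under the integral sign in \eqref{eq:tildehH} is legitimate and the two pieces can be evaluated separately and then reassembled.

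For the $t$-derivative, the fundamental theorem of calculus applied to the two integrals in \eqref{eq:tildehH} gives
\[
\frac{\partial \widetilde h_H}{\partial t}(t,q) = \theta_{z_H^q(t)}\!\left(\dot z_H^q(t)\right) - H\bigl(t, z_H^q(t)\bigr),
\]
so that $\frac{\partial \widetilde h_H}{\partial t}(t,q)\, dt = (z_H^q)^*\theta(t) - H(t, z_H^q(t))\, dt$, where $(z_H^q)^*\theta(t)$ denotes the one-form $t \mapsto \theta_{z_H^q(t)}(\dot z_H^q(t))\, dt$ appearing in \eqref{eq:dtildeh}.

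For the $q$-derivative, fix $t$ and let $s \mapsto q_s$ be a curve in $N$ with $q_0 = q$ and $\dot q_0 = v \in T_q N$. Applying the first variation formula \eqref{eq:1stvariation}, adapted to the interval $[0,t]$ in place of $[0,1]$, to the family of paths $u \mapsto z_H^{q_s}(u) = \phi_H^u(q_s)$, $u\in[0,t]$, whose variation field is $\xi(u) = \frac{\partial}{\partial s}\big|_{s=0}\phi_H^u(q_s)$, I obtain three terms. The bulk term $\int_0^t \omega\bigl(\dot z_H^{q}(u) - X_H(u,z_H^{q}(u)), \xi(u)\bigr)\, du$ vanishes identically because each $z_H^{q_s}$ is a genuine Hamiltonian trajectory of $X_H$. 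The initial boundary term $-\langle \theta(z_H^q(0)), \xi(0)\rangle$ vanishes because $z_H^q(0) = q \in o_N$ and the Liouville form $\theta$ is identically zero at points of the zero section. Hence only the terminal boundary term survives, namely $\langle \theta(z_H^q(t)), \xi(t)\rangle$; since $z_H^q(t) = \psi_H^t(q)$ and $\xi(t) = d(\psi_H^t)_q(v)$, this equals $\bigl((\psi_H^t)^*\theta\bigr)_q(v)$. As $v$ was arbitrary, $d_q\widetilde h_H(t,q) = (\psi_H^t)^*\theta$, and adding the $dt$-component yields \eqref{eq:dtildeh}.

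Finally, to pass from \eqref{eq:dtildeh} to \eqref{eq:dtildeh2}, I would observe that the pullback $\psi_H^*\theta$ of $\theta$ by $\psi_H: [0,1]\times N \to T^*N$, $\psi_H(t,q) = \phi_H^t(q)$, decomposes at the point $(t,q)$ into the fiber component $(\psi_H^t)^*\theta$ (pullback along $q' \mapsto \psi_H(t,q')$) plus the $dt$-component $\theta_{\psi_H(t,q)}\!\left(\frac{\partial}{\partial t}\psi_H(t,q)\right) dt = \theta_{z_H^q(t)}(\dot z_H^q(t))\, dt = (z_H^q)^*\theta(t)$, because $\psi_H(t,q) = z_H^q(t)$ and $\frac{\partial}{\partial t}\psi_H(t,q) = \dot z_H^q(t)$. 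Substituting the resulting identity $\psi_H^*\theta = (z_H^q)^*\theta(t) + (\psi_H^t)^*\theta$ into \eqref{eq:dtildeh} gives $d\widetilde h_H = \psi_H^*\theta - H(t, z_H^q(t))\, dt$, which is \eqref{eq:dtildeh2}. The only point requiring care is the bookkeeping of which differential is taken in which variable and the matching of each piece with the corresponding component of $\psi_H^*\theta$; there is no genuine analytic obstacle, the whole argument being the first variation formula \eqref{eq:1stvariation} together with the vanishing of $\theta$ along $o_N$.
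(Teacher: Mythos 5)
Your proof is correct and follows exactly the route the paper intends: the paper states the lemma "follows immediately from \eqref{eq:1stvariation} whose proof we omit," and your argument supplies precisely that omitted computation, applying the first variation formula on $[0,t]$ for the $q$-differential (with the bulk term killed by the Hamiltonian-trajectory condition and the initial boundary term killed by the vanishing of $\theta$ on $o_N$), the fundamental theorem of calculus for the $t$-derivative, and the decomposition $\psi_H^*\theta = (z_H^q)^*\theta(t) + (\psi_H^t)^*\theta$ to pass from \eqref{eq:dtildeh} to \eqref{eq:dtildeh2}.
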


It turns out that the following form of Hamiltonian trajectories
\be\label{eq:zxH}
z_x^H(t) = \phi_H^t((\phi_H^1)^{-1}(x))
\ee
are also useful, which specifies the \emph{final point} of the trajectory instead of
the initial point as specified in the trajectory $z^q_H$.
Then we define
\be\label{eq:hH}
h_H(t,x) = \widetilde h_H(t,(\phi_H^t)^{-1}(x)), \quad x \in \phi_H^t(o_N)
\ee
in the \emph{moving frame}.

Now consider the Lagrangian submanifold $\phi_H^1(o_N)$. We would like to point out that the function
$$
h_H(1,\cdot):L_H \to \R\,; \, h_H(1,x):= \widetilde h_H(1,(\phi_H)^{-1}(x))
$$
defines the natural generating function of $L_H : = \phi_H^1(o_N)$ in that $d_x h_H = i_H^*\theta$
where $i_H:L_H \to T^*N$ is the canonical inclusion map. The image of the map
$$
x \in L_{H} \mapsto (h_{H}(x),x)
$$
defines a canonical Legendrian lift of $L_{H}$ in the one-jet bundle
$J^1(N) \cong \R \times T^*N$. We call $h_H$ the basic generating function
in the moving frame.  We denote the corresponding Legendrian submanifold by $R_H$.
However, as a function on $N$, $h_{H}$ is multi-valued,
while $\widetilde h_H$ is a well-defined single-valued function.

In general, the projection $R \to \R \times N$ of any Legendrian submanifold $R \subset J^1(N,\R)
= \R \times T^*N$ is called the wave front \cite{eliash:front} of the Legendrian
submanifold $R$. We denote by $W_R \subset \R \times N$ by the front of $R$.
We also define the (Lagrangian) action spectrum of $H$ on $T^*N$ by
\be\label{eq:SpecL}
\Spec(H;N) = \{\CA^{cl}_H(z_x^H) \mid x \in L_H \cap o_N\}
\ee
which also coincides with the set of critical values of $h_H$.
It follows that $\Spec(H;N)$ is a compact subset of $\R$ of measure zero.

\begin{rem} We would like to note that we have no a priori control of $C^0$ bound
for the functions $h_H$ (or equivalently $\widetilde h_H$), even when $H$ is
bounded in $L^{(1,\infty)}$ norm. Getting this $C^0$-bound is equivalent to getting
the bound for the actions of the relevant Hamiltonian chords. Indeed understanding the
precise relationship between the
action bound, the norm $\|H\|$ and the $C^0$-distance of the time-one map $\phi_H^1$
is a heart of the matter in $C^0$ symplectic topology.

In section \ref{sec:basic}, we recall construction of \emph{basic phase function}
$f_H$ from \cite{oh:jdg} which is a particular single valued selection of
the multivalued function $h_H$ on $N$ that has particularly nice properties
in relation to the study of spectral invariants of the present paper.
This function was constructed via the Floer mini-max arguments similarly as the
spectral invariants $\rho^{ham}(H;a)$ is defined in \cite{oh:jdg}, and its $C_0$-norm is bounded by $\|H\|$.
\end{rem}

\section{Basic phase function and its associated Lagrangian selector}
\label{sec:basic}

In this section, we first recall the definition of basic phase
function constructed in \cite{oh:jdg}. Then we introduce a crucial
measurable map $\varphi^H: N \to N$, which is defined by a selection of
of a single valued branch of the multivalued section
$$
N \to L_H \subset T^*M
$$
followed by $(\phi_H^1)^{-1}$. We call this map the mass transfer map
associated to the Hamiltonian $H$. It is interesting to note that such a
selection process was studied e.g., in the theory of multi-valued
functions, or $Q$-valued functions, in the sense of Almgren
\cite{almgren} in geometric measure theory. In particular, in
\cite{DGT}, existence of such a single valued branch is studied in
the general abstract setting of metric spaces and a finite group
action of isometries. It would be interesting to see whether there
would be any other significant intrusion of the theory of multivalued
functions into the study of symplectic topology.

\subsection{Graph selector of wave fronts}
\label{subsec:graphselector}

The following theorem was proved in \cite{chaperon} and in
\cite{oh:jdg} by the generating function method and by the Floer
theory respectively. (According to \cite{PPS}, the proof
of this theorem was first outlined by Sikorav in Chaperon's
seminar.)

\begin{thm}[Sikorav, Chaperon \cite{chaperon}, Oh \cite{oh:jdg}] Let $L \subset T^*N$
be a Hamiltonian deformation of the zero section $o_N$. Then there
exists a Lipschitz continuous function $f: N \to \R$, which is
smooth on an open subset $N_0 \subset N$ of full measure, such that
$$
(q,df(q)) \in L
$$
for every $q \in N_0$. Moreover if $df(q) = 0$ for all $q \in
N_0$, then $L$ coincides with the zero section $o_N$. The choice of
$f$ is unique modulo the shift by a constant.
\end{thm}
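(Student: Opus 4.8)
The plan is to construct the function $f$ directly from Floer theory, mirroring the construction of the spectral invariants $\rho^{lag}(H;a)$ but now fiberwise over $N$. First I would fix a Hamiltonian $H \in \CP C^\infty_{asc}$ with $L = \phi_H^1(o_N)$, and for each $q \in N$ consider the action functional $\CA^{cl}_H$ restricted to the space of paths $\CP(T^*N;o_N,T_q^*N)$ of trajectories starting on $o_N$ and ending on $T_q^*N$. The critical points of this functional are exactly the Hamiltonian chords $z$ with $z(0)\in o_N$, $z(1)\in T_q^*N$, equivalently the points $x \in L \cap T_q^*N$; their critical values are the values $h_H(1,x)$ of the basic generating function. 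Running the chain-level Floer mini-max over the fundamental class (the "point" class at the target fiber, or dually the unit) produces a distinguished critical value $f(q) := \rho(H;q)$. Setting $N_0$ to be the set of $q$ where $L \cap T_q^*N$ is a single transverse point (which is open and of full measure by Sard applied to the Lagrangian projection $L \to N$, since $L$ is exact-isotopic to $o_N$ hence Lagrangian), one gets $(q,df(q)) \in L$ on $N_0$ because on $N_0$ the mini-max value is forced to equal the unique critical value $h_H(1,x_q)$ and $d_q h_H = $ the covector of $x_q$ by the generating-function identity \eqref{eq:i*theta}.

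The key steps, in order: (i) set up the fiberwise Floer complexes and check that the continuation/filtration machinery gives a well-defined real number $f(q)$ for every $q$, lying in $\Spec(H;N)$; (ii) prove Lipschitz continuity of $q \mapsto f(q)$ — this follows from the Lipschitz dependence of the fiber constraint $T_q^*N$ on $q$ in a fixed metric, translated into a uniform shift of the action filtration, exactly as one proves $|\rho(H;a)-\rho(H';a)| \leq \|H-H'\|$ for the Hamiltonian variable; (iii) identify $f$ with a selection of the multivalued $h_H(1,\cdot)$ on $N_0$ and deduce $(q,df(q))\in L$ there; (iv) prove the rigidity statement: if $df \equiv 0$ on $N_0$ then, since $f$ is Lipschitz and $N_0$ has full measure, $f$ is constant, so its graph is $o_N$ on $N_0$, and by closedness of $L$ and density of $N_0$ we get $L = o_N$; (v) uniqueness modulo constants: two graph selectors $f_1,f_2$ differ by a locally constant function on $N_0$ wherever the selected branches agree, and a connectedness/continuity argument using that both are continuous on all of $N$ upgrades "locally constant" to "globally constant." For (v) and (iii) one can alternatively invoke that any two mini-max values over classes related by the module structure over $H^*(pt)$ agree when the intersection is a single point.

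The main obstacle I expect is step (ii) combined with getting the construction to be genuinely continuous (not merely defined) in $q$ across the singular locus: the Floer complex $CF(H;q)$ changes as $q$ crosses values where $L$ becomes tangent to a fiber or where the number of intersection points jumps, and one must show the mini-max value varies Lipschitz-continuously through these wall-crossings. The clean way is not to vary $q$ inside one complex but to compare, for nearby $q,q'$, the complexes via the natural chain map induced by a small Hamiltonian isotopy pushing $T_q^*N$ to $T_{q'}^*N$ (supported near the fiber), estimate its action-shift by $\dist(q,q')$ times a constant depending only on the metric and on $R$ with $L \subset D^R(T^*N)$, and conclude $|f(q)-f(q')| \leq C\,\dist(q,q')$. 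Once continuity on all of $N$ is in hand, differentiability a.e. is automatic (Rademacher), and the remaining points are the comparatively soft arguments (iii)–(v). A secondary subtlety worth isolating is the a.e.-smoothness upgrade: Lipschitz gives differentiability a.e., but one wants an \emph{open} full-measure set $N_0$ on which $f$ is $C^\infty$ and the graph lands in $L$; this is supplied by the transversality/Sard observation that the set of regular values of the Lagrangian projection $L \to N$ is open and of full measure, and on that set the implicit function theorem makes the local branch of $h_H(1,\cdot)$ smooth, with $f$ equal to one such branch by the mini-max characterization.
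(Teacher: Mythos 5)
Your overall strategy is exactly the one the paper (and its reference \cite{oh:jdg}) uses: define $f(q):=\rho^{lag}(H;\{q\})$ by chain-level mini-max in the fiberwise Floer complex $CF(H;o_N,T_q^*N)$, and then read off the graph-selector property from spectrality and the generating-function identity $i_H^*\theta = dh_H$. The Lipschitz estimate via a small Hamiltonian shift of the fiber $T_q^*N$ is also the right mechanism, and matches the bound $|df(q)| \leq \max_{x\in L}|p(x)|$ implicit in the paper.

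However, there is a concrete error in your characterization of $N_0$. You set $N_0$ to be the set of $q$ for which $L\cap T_q^*N$ is a \emph{single} transverse point, and claim this is open and of full measure by Sard applied to $\pi|_L:L\to N$. Sard gives full measure of the set of \emph{regular values} of $\pi|_L$, but a regular value can perfectly well have many preimages; indeed Example 9.4 of the paper (reproduced in Section 6.2) is exactly such a situation, where over a whole arc of $S^1$ there are four transverse intersection points. So the set you call $N_0$ need not be dense, let alone full measure, and the argument collapses at this point. The correct $N_0$ is the set of regular values of $\pi|_L$ with the \emph{Maxwell set} removed --- the locus where the mini-max value is simultaneously realized by two or more local branches of $h_H(1,\cdot)$. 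On the regular-value set $L$ is locally a finite union of smooth exact graphs, but $f$ can still jump between branches across the Maxwell set, and at those points $f$ is Lipschitz but not differentiable (this is exactly the $\Sing(\sigma_F)$ analyzed in Section 4). One must separately argue that this Maxwell set is closed of measure zero (generically a stratified hypersurface). Your last paragraph retreats from ``single transverse point'' to ``regular values,'' but it still does not exclude the Maxwell branch-crossings, so the smoothness claim on $N_0$ is not actually established. A secondary softer point: step (v) (uniqueness modulo constants) is left at the level of ``a connectedness/continuity argument,'' and the nontrivial case --- two continuous selectors that might \emph{a priori} pick different branches over some open set --- is precisely where the work is; but the paper itself delegates this to \cite{chaperon,PPS,oh:jdg}, so this is more a matter of scope than a flaw peculiar to your write-up.
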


The details of the proof of Lipschitz continuity of $f$ is given
in \cite{PPS}. We denote by $\operatorname{Sing} f$
the set of non-differentiable points of $f$. Then by definition
$$
N_0=\operatorname{Reg} f: = N \setminus \operatorname{Sing} f
$$
is a subset of full measure and $f$ is differentiable thereon.

We call such a function $f$ a graph selector in general following the terminology of \cite{PPS}
and denote the corresponding graph part of the front of the Legendrian submanifold $R$ by
$$
G_{f}: = \{(h_L(q,df(q)), q,df(q)) \mid q \in N \} \subset R.
$$
By construction, the projection $\pi_R: G_{f} \to N$ restricts to a one-one
correspondence and the function $f: \operatorname{Reg} f
\to \R$ continuously extends to $\overline{\operatorname{Reg} f} = N$.

By definition,
\be\label{eq:dfL} |df(q)| \leq \max_{x \in L}
|p(x)|
\ee
for any $q \in N_0$, where $x = (q(x),p(x))$ and the norm
$|p(x)|$ is measured by any given Riemannian metric on $N$.

\begin{prop} As $d_{\text{\rm H}}(L,o_N) \to 0$, $|df(q)| \to 0$ uniformly over $q \in N_0$.
\end{prop}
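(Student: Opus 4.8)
The plan is to reduce the statement to an elementary fact about subsets of $T^*N$ near the zero section, the only input from generating function theory being the a priori bound \eqref{eq:dfL}. That inequality gives $|df(q)|\le \max_{x\in L}|p(x)|$ for \emph{every} $q\in N_0$, with a right-hand side independent of $q$, so it is enough to show
\[
\sup_{x\in L}|p(x)| \longrightarrow 0 \quad\text{as } d_{\text{\rm H}}(L,o_N)\to 0 ;
\]
the uniformity over $q\in N_0$ claimed in the Proposition then comes for free.

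To prove this I would fix, once and for all, the Riemannian metric on $T^*N$ entering the definition of $d_{\text{\rm H}}$ — taken complete, so that closed metric balls are compact — together with a Riemannian metric on $N$ used to measure the fibre norm $\nu(x):=|p(x)|$. Let $B:=\{x\in T^*N \mid \dist_{T^*N}(x,o_N)\le 1\}$, a compact neighbourhood of $o_N$. The key point is that $\nu$ is Lipschitz on $B$: along each fibre it is a norm, hence $1$-Lipschitz in $p$, while the coefficients of the metric on $N$ vary in a $C^1$-controlled way over the compact base, so there is a constant $K>0$ with $\operatorname{Lip}(\nu|_B)\le K$. As $\nu$ vanishes identically on $o_N$, choosing for each $x\in B$ a nearest point $y\in o_N$ yields
\[
\nu(x) = \nu(x)-\nu(y) \le K\,\dist_{T^*N}(x,y) = K\,\dist_{T^*N}(x,o_N).
\]

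The conclusion is then immediate: if $\delta:=d_{\text{\rm H}}(L,o_N)<1$, every $x\in L$ lies within distance $\delta<1$ of $o_N$, so $L\subset B$ and $\nu(x)\le K\delta$; hence $\sup_{x\in L}|p(x)|\le K\,d_{\text{\rm H}}(L,o_N)$, and together with \eqref{eq:dfL} this gives $\sup_{q\in N_0}|df(q)|\le K\,d_{\text{\rm H}}(L,o_N)\to 0$. The only step that is not entirely formal — the ``hard part'', such as it is — is the comparability of the fibre norm with the distance to $o_N$: one must know that a point of $T^*N$ cannot be Hausdorff-close to the zero section while carrying a large co-vector. This is exactly the Lipschitz estimate above, and it is the compactness of $N$, hence of $o_N$ and of the neighbourhood $B$, that makes $K$ uniform; since all metrics involved are mutually equivalent on the compact set $B$, the precise choices affect only the value of $K$, not the statement.
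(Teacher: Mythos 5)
Your proof is correct and follows exactly the route the paper implicitly intends: the paper states the bound \eqref{eq:dfL} immediately before the proposition and leaves the rest unproved, and your argument supplies precisely the missing step, namely that the fibre norm $|p(x)|$ is comparable to the ambient distance $\dist_{T^*N}(x,o_N)$ on a compact tube around the zero section, so that $\max_{x\in L}|p(x)|\to 0$ whenever $d_{\text{\rm H}}(L,o_N)\to 0$. (In fact the paper later records essentially this comparability as part of Lemma \ref{lem:rNg}.)
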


In \cite{oh:jdg}, a canonical choice of $f$ is constructed via the
chain level Floer theory, \emph{provided} the generating Hamiltonian
$H$ of $L$ is given. The author called the corresponding graph selector $f$
the basic phase function of $L = \phi_H^1(o_N)$ and denoted it by $f_H$.
We give a quick outline of the construction referring the
readers to \cite{oh:jdg} for the full details of the construction.

\subsection{The basic phase function $f_H$ and its Lagrangian selector}
\label{subsec:basic}

Another construction in \cite{oh:jdg} is given by considering the
Lagrangian pair
$$
(o_N, T^*_qN), \quad q \in N
$$
and its associated Floer complex $CF(H;o_N, T^*_qN)$ generated by
the Hamiltonian trajectory $z:[0,1] \to T^*N$ satisfying
\be\label{eq:Hamchordeq2}
\dot z = X_H(t,z(t)), \quad z(0) \in o_N, \, z(1) \in T^*_qN.
\ee
Denote by $\CC hord(H;o_N,T^*_qN)$ the set of
solutions. The differential $\del_{(H,J)}$ on $CF(H;o_N, T^*_qN)$ is
provided by the moduli space of solutions of the perturbed
Cauchy-Riemann equation
\be\label{eq:CRHJqN}
\begin{cases}
\dudtau + J\left(\dudt - X_H(u) \right) = 0 \\
u(\tau,0) \in o_N, \, u(\tau,1) \in T^*_qN.
\end{cases}
\ee

An element $\alpha \in CF(H;o_N,T^*_qN)$ is expressed as a finite
sum
$$
\alpha = \sum_{z \in \CC hord(H;o_N,T_q^*N)} a_z [z], \quad a_z \in
\Z.
$$
We denote the level of the chain $\alpha$ by
$$
\lambda_H(\alpha): = \max_{z \in \supp \alpha} \{\CA^{cl}_H(z)\}.
$$
The resulting invariant $\rho^{lag}(H;\{q\})$ is to be defined by the mini-max
value
$$
\rho^{lag}(H;\{q\}) = \inf_{\alpha \in [q]}\lambda_H(\alpha)
$$
where $[q] \in H_0(\{q\};\Z)$ is a generator of the homology group
$H_0(\{q\};\Z)$.

 A priori, $\rho^{lag}(H;\{q\})$ is defined when
$\phi_H^1(o_N)$ intersects $T_qN^*$ transversely but can be extended
to non-transversal $q$'s by continuity. By varying $q \in N$, this
defines a function $f_H: N \to \R$ which is precisely the one called the basic
phase function in \cite{oh:jdg}. (A similar construction of such a function using the generating function
method was earlier given by Sikorav and Chaperon \cite{chaperon}.)
We call the associated
graph part $G_{f_H}$ the basic branch of the front $W_{R_H}$ of $R_H$.

\begin{thm}[\cite{oh:jdg,oh:alan}]
There exists a solution $z:[0,1] \to T^*N$ of $\dot z = X(t,z)$ such
that $z(0)=q, \, z(1) \in o_N$ and $\CA^{cl}_{H}(z) = \rho^{lag}(H;\{q\})$
whether or not $\phi_H^1(o_N)$ intersects $T_q^*N$
transversely.
\end{thm}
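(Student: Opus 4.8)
The plan is to prove that there exists a Hamiltonian trajectory $z$ with $z(0)=q$, $z(1)\in o_N$, and $\CA^{cl}_H(z)=\rho^{lag}(H;\{q\})$, regardless of whether $\phi_H^1(o_N)$ meets $T_q^*N$ transversally. I would proceed by a two-step argument: first establish the statement in the transversal case by a direct analysis of the mini-max chain, and then remove the transversality hypothesis by a limiting argument using the continuity of $\rho^{lag}(\cdot;\{q\})$ and properness of the action spectrum. The key point throughout is that the mini-max value is always \emph{attained} on a chord, not merely approximated.

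First, suppose $\phi_H^1(o_N)\pitchfork T_q^*N$, so that $\CC hord(H;o_N,T_q^*N)$ is finite and $CF(H;o_N,T_q^*N)$ is a finitely generated complex. By definition $\rho^{lag}(H;\{q\})=\inf_{\alpha\in[q]}\lambda_H(\alpha)$ over all cycles $\alpha$ representing the generator $[q]\in H_0(\{q\};\Z)$. Since the Floer complex is generated by the finitely many elements of $\CC hord(H;o_N,T_q^*N)$, the filtration levels $\lambda_H(\alpha)$ take values in the finite set $\{\CA^{cl}_H(z)\mid z\in\CC hord(H;o_N,T_q^*N)\}$; hence the infimum is a minimum, realized by some cycle $\alpha_0$ with $\lambda_H(\alpha_0)=\rho^{lag}(H;\{q\})$. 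By definition of $\lambda_H$, there is a chord $z\in\supp\alpha_0$ with $\CA^{cl}_H(z)=\lambda_H(\alpha_0)=\rho^{lag}(H;\{q\})$. Such a $z$ is by construction a Hamiltonian trajectory with $z(0)\in o_N$ and $z(1)\in T_q^*N$; reparametrizing by $t\mapsto 1-t$ (equivalently replacing $H$ by $\overline H$ as in the Notations) gives a trajectory with $z(0)=q$, $z(1)\in o_N$, and the same action up to the sign convention fixed in the construction, which is exactly the desired chord. This already appears as the content of \cite{oh:jdg,oh:alan}, and I would simply cite that argument here.

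Second, for general $q$, choose $q_i\to q$ with $\phi_H^1(o_N)\pitchfork T_{q_i}^*N$; such $q_i$ exist by Sard's theorem applied to the Lagrangian projection $\phi_H^1(o_N)\to N$. By the transversal case there are chords $z_i$ with $z_i(0)=q_i$, $z_i(1)\in o_N$, and $\CA^{cl}_H(z_i)=\rho^{lag}(H;\{q_i\})$. Since $H$ is asymptotically constant, $X_H$ is compactly supported, so all $z_i$ (starting on the fixed compact set $T_q^*N$ near $q$, with bounded initial momenta by \eqref{eq:dfL} and the uniform bound $|p|\le \max_{x\in L_H}|p(x)|$) lie in a fixed compact subset of $T^*N$; by the Arzel\`a–Ascoli theorem and smooth dependence of the flow $\phi_H^t$ on initial conditions, a subsequence converges in $C^\infty$ to a Hamiltonian trajectory $z_\infty$ with $z_\infty(0)=q$ and $z_\infty(1)\in o_N$. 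Continuity of the action functional $\CA^{cl}_H$ under $C^1$-convergence gives $\CA^{cl}_H(z_\infty)=\lim_i\rho^{lag}(H;\{q_i\})$, and continuity of $q\mapsto\rho^{lag}(H;\{q\})$ (used to extend $f_H$ to non-transversal points, as recalled just above the theorem) identifies this limit with $\rho^{lag}(H;\{q\})$. Thus $z_\infty$ is the required chord.

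The main obstacle is ensuring the \emph{compactness} of the family $\{z_i\}$ and the continuity of $\CA^{cl}_H$ along the limit: one must check that no action is lost in the limit, i.e.\ that $\CA^{cl}_H$ is genuinely continuous in the $C^1$ (indeed $C^0$ after integration by parts) topology on the relevant space of chords, and that the initial momenta of the $z_i$ stay bounded so that Arzel\`a–Ascoli applies — this is where the asymptotic-constancy of $H$ (equivalently, compact support of $X_H$) and the gradient bound $|df(q)|\le\max_{x\in L_H}|p(x)|$ are essential. Once those uniform estimates are in place, the convergence and the identification of the limiting action with $\rho^{lag}(H;\{q\})$ are routine, and the proof is complete.
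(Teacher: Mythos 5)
Your proof is correct and follows the standard compactness argument that the paper alludes to: the theorem carries only citations to \cite{oh:jdg,oh:alan}, and the analogous Proposition~\ref{prop:tight} is said to follow from ``a compactness argument'' as in \cite{oh:alan}. Your transversal case --- finiteness of the chord set forcing the infimum over representing cycles to be a minimum, attained at the action of some chord in the support of a minimizing cycle --- is exactly right. For the degenerate case your choice to perturb the base point $q$ rather than the Hamiltonian $H$ is a clean variant well adapted to the one-parameter family $\{T^*_qN\}_{q\in N}$: generic $q$ is a regular value of $\pi|_{L_H}$ by Sard, which is precisely transversality of $L_H$ with $T^*_qN$, whereas the Hamiltonian Floer version in \cite{oh:alan} instead perturbs $H$ to a nondegenerate one and uses $L^{(1,\infty)}$-continuity of $\rho$. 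Both converge to the same compactness lemma, and your route is arguably the more elementary one here since $H$ stays fixed throughout. One minor imprecision: the ``initial momentum bound from \eqref{eq:dfL}'' you invoke concerns $df$ of the graph selector, not chord endpoints; what you actually need is that $z_i(1)\in L_H\cap T^*_{q_i}N$ lies in the compact manifold $L_H$, so $z_i(t)=\phi_H^t\bigl((\phi_H^1)^{-1}(z_i(1))\bigr)$ is confined by the compactly supported flow and a subsequence converges in $C^\infty$ directly by smooth dependence of $\phi_H^t$ on the endpoint --- the Arzel\`a--Ascoli detour is unnecessary, though harmless. Finally note that the boundary condition ``$z(0)=q$, $z(1)\in o_N$'' in the theorem statement is the time-reversal of the chord convention \eqref{eq:Hamchordeq2} defining $CF(H;o_N,T^*_qN)$; your remark on reparametrizing by $t\mapsto 1-t$ correctly handles this, with the accompanying sign flip of $\CA^{cl}$ absorbed into the normalization in \cite{oh:jdg}.
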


We summarize the main properties of $f_H$ established in
\cite{oh:jdg}.

\begin{thm}[\cite{oh:jdg}]\label{thm:ohcag}
When the Hamiltonian $H=H(t,x)$ such that $L = \phi_H^1(o_N)$ is
given, there is a canonical lift $f_H$ defined by $f_H(q): =
\rho^{lag}(H;\{q\})$ that satisfies \be\label{eq:fversush} f_H \circ
\pi(x) = h_H(x) = \CA^{cl}_H(z_x^H) \ee for some Hamiltonian chord
$z_x^H$ ending at $x \in T^*_qN$. This $f_H$ satisfies the following
property in addition \be\label{eq:fH} \|f_H - f_{K}\|_\infty \leq
\|H - K\|. \ee
\end{thm}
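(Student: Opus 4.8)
The plan is to build $f_H$ as a Floer‑theoretic mini‑max and then verify the two asserted properties separately: the identity \eqref{eq:fversush} will come from the fact — already recorded in the theorem quoted just above — that this mini‑max is \emph{attained} by a genuine Hamiltonian chord, while the estimate \eqref{eq:fH} will come from the standard energy inequality for Floer continuation homomorphisms. Throughout, the asymptotic constancy of $H$ is what makes everything well posed: by the maximum principle applied along the Liouville vector field on the convex end of $T^*N$, every solution of \eqref{eq:CRHJqN} (and of its continuation version below) is confined to a fixed compact subset of $T^*N$ depending only on $\supp X_H$, so the relevant moduli spaces are compact modulo breaking and the Floer complexes and their operators are defined over $\Z$.

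First I would make $\rho^{lag}(H;\{q\})$ well defined for those $q$ with $\phi_H^1(o_N)\pitchfork T^*_qN$. Since $o_N$ and $T^*_qN$ are exact (no disc bubbling) and meet in the single point $q$, the $C^0$‑confinement above gives a well‑defined $\Z$‑graded Floer homology $HF(H;o_N,T^*_qN)$, and its invariance under Hamiltonian isotopy of one factor — deforming $H$ to $0$ — identifies it with the singular homology $H_*(\{q\};\Z)\cong\Z$ in degree $0$. Thus the generator $[q]$ is represented by some cycle, so the set of levels $\{\lambda_H(\alpha)\mid \partial_{(H,J)}\alpha=0,\ [\alpha]=[q]\}$ is nonempty, and it is bounded below since $\lambda_H(\alpha)\ge\min\Spec(H;N)>-\infty$. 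Hence $f_H(q):=\rho^{lag}(H;\{q\})=\inf_{[\alpha]=[q]}\lambda_H(\alpha)$ is a finite real number; the extension to the remaining $q$ and the independence of all auxiliary choices will be forced by the uniform estimate proved next.

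For \eqref{eq:fH}, given $H$ and $K$ I would form the $\tau$‑dependent Cauchy--Riemann equation with boundary on $o_N$ and $T^*_qN$ interpolating $X_H$ at $-\infty$ with $X_K$ at $+\infty$; counting its rigid solutions produces a continuation chain map $\Phi_{KH}\colon CF(H;o_N,T^*_qN)\to CF(K;o_N,T^*_qN)$, a homotopy equivalence carrying the canonical generator to the canonical generator. The basic energy identity shows $\lambda_K(\Phi_{KH}\alpha)\le\lambda_H(\alpha)+E^+(H-K)$ with $E^+(G):=\int_0^1\max_x G(t,x)\,dt$; passing to the mini‑max over cycles representing $[q]$ gives $f_K(q)\le f_H(q)+E^+(H-K)$, and by symmetry $f_H(q)\le f_K(q)+E^+(K-H)$. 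Since $E^+(H-K)+E^+(K-H)=\int_0^1\osc\big((H-K)_t\big)\,dt=\|H-K\|$, and since with the normalization in force (e.g. compactly supported Hamiltonians, for which $\max_x(H-K)(t,x)\ge 0\ge\min_x(H-K)(t,x)$ for each $t$) each summand is nonnegative, we get $|f_H(q)-f_K(q)|\le\|H-K\|$ for every transverse $q$; taking the supremum over $q$, and using the same bound to extend $f_H$ continuously over the (closed, measure‑zero) non‑transverse locus, yields \eqref{eq:fH}. The Lipschitz continuity of $f_H$ itself follows by applying this to $q$‑translates of $T^*_qN$, which are conjugates of $(o_N,T^*_qN)$ by Hamiltonian isotopies of small Hofer norm.

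Finally, \eqref{eq:fversush}: by the theorem quoted just above, for each $q$ there is a chord $z\colon[0,1]\to T^*N$ with $z(0)\in o_N$, $z(1)\in T^*_qN$ and $\CA^{cl}_H(z)=\rho^{lag}(H;\{q\})=f_H(q)$. Setting $x:=z(1)\in L_H\cap T^*_qN$, the defining property $z_x^H(0)\in o_N$, $z_x^H(1)=x$ gives $z=z_x^H$, hence $\CA^{cl}_H(z)=\CA^{cl}_H(z_x^H)=h_H(x)$ and $\pi(x)=q$, which is exactly $f_H\circ\pi(x)=h_H(x)=\CA^{cl}_H(z_x^H)$; as a by‑product $f_H(q)\in\Spec(H;N)$, so $f_H$ is a graph selector in the sense of the Sikorav--Chaperon theorem recalled above. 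The step I expect to require the most care is precisely the one I have permitted myself to cite — that the mini‑max value is a true critical value, realized by an actual chord rather than only approached by Floer cycles; this is the Floer‑homological substitute for the classical deformation/mountain‑pass lemma, and it is there that one must combine Gromov--Floer compactness with the closedness and measure‑zero of $\Spec(H;N)$ to exclude the levels of a minimizing family drifting down to a non‑spectral value while still carrying $[q]$. A secondary, purely bookkeeping, point is the normalization that makes the two one‑sided continuation estimates combine into the symmetric bound $\|H-K\|$ rather than the possibly larger $\max\{E^+(H-K),E^+(K-H)\}$.
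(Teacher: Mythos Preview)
The paper does not actually prove this theorem; it is stated as a summary of results from \cite{oh:jdg}, with only the construction of $f_H(q)=\rho^{lag}(H;\{q\})$ outlined in the paragraphs preceding the statement. Your proposal correctly reconstructs that argument---the Floer mini-max for the pair $(o_N,T^*_qN)$, the continuation estimate for \eqref{eq:fH}, and the appeal to spectrality (the theorem quoted just above) for \eqref{eq:fversush}---and is essentially the proof one would find in the cited reference; your two flagged caveats (tightness of the mini-max value and the normalization needed to combine the one-sided continuation bounds into the symmetric $\|H-K\|$) are exactly the points where care is required.
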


An immediate corollary of Theorem  is

\begin{cor}\label{cor:fHiconv} If $H_i$ converges in $L^{(1,\infty)}$,
then $f_{H_i}$ converges uniformly.
\end{cor}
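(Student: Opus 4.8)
The plan is to deduce this directly from the Lipschitz-type estimate \eqref{eq:fH} of Theorem \ref{thm:ohcag}, so the argument is essentially a one-step reduction. First I would unwind what the hypothesis means: convergence of $H_i$ in the $L^{(1,\infty)}$-topology is exactly the statement that $\|H_i - H\| \to 0$, where $H$ is the limit and $\|G\| = \int_0^1 \osc(G_t)\, dt$ is the norm introduced in Section \ref{subsec:top-flows}; under the oscillation convention for the $L^{(1,\infty)}$ norm these are literally the same thing, and under the $\sup$ convention they differ by at most the harmless factor $\osc(G_t) \le 2\max_x |G_t(x)|$. In particular the sequence $\{H_i\}$ is Cauchy with respect to $\|\cdot\|$.

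Next I would feed pairs $(H_i, H_j)$ into \eqref{eq:fH} to obtain
$$
\|f_{H_i} - f_{H_j}\|_\infty \le \|H_i - H_j\| \longrightarrow 0 \quad (i,j \to \infty),
$$
so that $\{f_{H_i}\}$ is a Cauchy sequence in the Banach space $\bigl(C^0(N), \|\cdot\|_\infty\bigr)$ and therefore converges uniformly to some continuous function $f_\infty$ on $N$. Applying \eqref{eq:fH} once more to the pair $(H_i, H)$ gives $\|f_{H_i} - f_H\|_\infty \le \|H_i - H\| \to 0$, which identifies the limit as $f_\infty = f_H$, the basic phase function of the limiting Lagrangian $\phi_H^1(o_N)$. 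There is no genuine obstacle: all the analytic content is already packaged in the estimate \eqref{eq:fH}, and the only point requiring a moment's care is the bookkeeping between the $L^{(1,\infty)}$ norm in the hypothesis and the Hofer-type norm $\|\cdot\|$ appearing in \eqref{eq:fH}, which as noted is at worst a factor of $2$ and hence irrelevant for the convergence statement.
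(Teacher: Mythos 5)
Your proof is correct and is precisely the argument the paper has in mind: the corollary is stated immediately after the Lipschitz estimate \eqref{eq:fH} and follows from it by the Cauchy-sequence argument you give. The remark on the harmless factor between the $L^{(1,\infty)}$ norm and the oscillation norm $\|\cdot\|$ is a reasonable bit of bookkeeping but does not change the substance.
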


Based on this corollary, we will just denote the limit continuous function by
\be\label{eq:limitfH}
f_H: = \lim_{i \to \infty}f_{H_i}
\ee
when $H_i \to H$ in $L^{(1,\infty)}$-topology, and call it the
basic phase function of the topological Hamiltonian $H$ or of the
$C^0$-Lagrangian submanifold $L_H = \phi_H^1(o_N)$.

Note that $\pi_H= \pi|_{L_H}: L_H = \phi_H^1(o_N) \to N$ is surjective for all $H$
(see \cite{lal-sik} for its proof) and so
$\pi_H^{-1}(\pi_H^{-1}(q)) \subset o_N$ is a non-empty
compact subset of $o_N \cong N$. Therefore we can regard the
`inverse' $\pi_H^{-1}:N \to L_H \subset T^*N$
as a everywhere defined multivalued section of $\pi: T^*N \to N$.

We introduce the following general definition

\begin{defn} Let $L \subset T^*N$ be a Lagrangian submanifold projecting
surjectively to $N$. We call a single valued section $\sigma$ of $T^*N$ with values
lying in $L$ a \emph{Lagrangian selector} of $L$.
\end{defn}

For any given Lagrangian selector $\sigma$ of $L = L_H =
\phi_H^1(o_N)$, we define the map $\varphi^\sigma: N \to N$ to be
$$
\varphi^\sigma(q) = (\phi_H^1)^{-1}(\sigma(q)).
$$
Recall that the graph $G_{f_H}$ is a subset of the front $W_{R_H}$
of $R_H$ and for a generic choice of $H$ the set $\operatorname{Sing}f_H \subset N$
consists of the crossing points of the two different branches and the cusp points
of the front of $W_{R_H}$. Therefore it is a set of measure zero in $N$.
(See \cite{eliash:front}, \cite{PPS}, for example.)
Once the graph selector $f_H$ of $L_H$ is picked out, it provides a
natural Lagrangian selector defined by
$$
\sigma_H(q): = \text{Choice}\{x \in L_H \mid \pi(x) = q, \,  \CA^{cl}_H(z^H_x) = f_H(q)\}
$$
via the axiom of choice where $\text{Choice}$ is a choice function. It satisfies
\be\label{eq:sigmaHdfH}
\sigma_H(q) = df_H(q)
\ee
whenever $df_H(q)$ is defined. We call this particular Lagrangian selector of
$L_H$ the basic Lagrangian selector and the pair $(\sigma_H,f_H)$ the
basic wave front of the Lagrangian submanifold $\phi_H^1(o_N)$.

The general structure theorem of the wave front (see
\cite{eliash:front}, \cite{PPS} for example) proves that the section
$\sigma_H$ is a differentiable map on a set of full measure for a
generic choice of $H$ which is, however, \emph{not necessarily
continuous}: This is because as long as $q \in N \setminus
\operatorname{Sing}f_H$, we can choose a small open neighborhood of
$U \subset N \setminus \operatorname{Sing}f_H$ of $q$ and $V \subset
L_H = \phi_H^1(o_N)$ of $x \in V$ with $\pi(x) = q$ so that the
projection $\pi|_V: V \to U$ is a diffeomorphism.

Then we define the mass transfer map $\varphi^H: N \to N$ by
\be\label{eq:qH}
\varphi^H(q) = (\phi_H^1)^{-1}(\sigma_H(q)).
\ee
The map $\varphi^H$ is \emph{measurable, but not necessarily continuous}, which is
however differentiable on a set of full measure for a generic choice of $H$.
And from its definition, it is surjective if and only if the Lagrangian submanifold
$\phi_H^1(o_N)$ is a graph of an exact one-form.
On the other hand, the map $\varphi^H$
may not be continuous along the subset $\operatorname{Sing}f_H \subset N$ which is
a set of measure zero. By definition, we have
\be\label{eq:fHqAAH}
f_H(q) = \CA^{cl}_H\left(z_H^{\varphi^H(q)}\right) = \widetilde h_H(\varphi^H(q)).
\ee
This relationship between $f_H$ and $\widetilde h_H$ is the reason
why we introduce the transfer map $\varphi^H$.

The following lemma is obvious from the definition of $\varphi^H$. We note
$$
d_{\text{\rm H}}(\phi_H^1(o_N),o_N) \leq \osc_{C^0}(\phi_H^1;o_N)
$$
where $d_{\text{\rm H}}(\phi_H^1(o_N),o_N)$ is the Hausdorff distance.

\begin{lem}\label{lem:varphiKi1to0} We have
$$
d(\varphi^H(x),x) \leq d_{\text{\rm H}}(\phi_H^1(o_N),o_N) + \osc_{C^0}(\phi_H^1;o_N)
\leq 2 \osc_{C^0}(\phi_H^1;o_N)
$$
for all $x \in N_0$. In particular, if $\osc_{C^0}(\phi_H^1;o_N) \to 0$, then
$\max_{x \in N_0} d(\varphi^H(x),x) \to 0$ uniformly over $x \in N_0$.
\end{lem}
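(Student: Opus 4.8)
The plan is to estimate $d(\varphi^H(x),x)$ by a single application of the triangle inequality in $T^*N$, splitting the displacement of $\varphi^H(x)$ from $x$ into a ``flow part'' controlled by $\osc_{C^0}(\phi_H^1;o_N)$ and a ``vertical part'' controlled by the Hausdorff distance $d_{\text{\rm H}}(\phi_H^1(o_N),o_N)$. Fix $x\in N_0$, so that $\sigma_H(x)=df_H(x)$ is defined and $(x,df_H(x))\in L_H=\phi_H^1(o_N)$ by the graph selector property; write $y:=\sigma_H(x)=df_H(x)$ and $u:=\varphi^H(x)=(\phi_H^1)^{-1}(y)\in o_N$. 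Identifying $x$ with the zero covector $0_x\in o_N$, I would insert $y$ as the intermediate point:
$$
d(\varphi^H(x),x)\ \le\ d\bigl((\phi_H^1)^{-1}(y),\,y\bigr)+d\bigl(y,\,0_x\bigr).
$$

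For the first summand, note $y=\phi_H^1(u)$ with $u\in o_N$, so $d((\phi_H^1)^{-1}(y),y)=d(u,\phi_H^1(u))\le\osc_{C^0}(\phi_H^1;o_N)$ directly from the definition of $\osc_{C^0}$. For the second summand, the key observation is that $\sigma_H$ is a \emph{section} of $T^*N$, hence $y=df_H(x)$ lies in the fibre $T_x^*N$ directly above the point $x\equiv 0_x$; therefore $d(y,0_x)$ is the purely vertical quantity $|df_H(x)|$, and since $(x,df_H(x))\in L_H$ one has $|df_H(x)|=\dist(df_H(x),o_N)\le d_{\text{\rm H}}(\phi_H^1(o_N),o_N)$ (cf.\ also \eqref{eq:dfL} and the Proposition asserting $|df(q)|\to0$ as $d_{\text{\rm H}}(L,o_N)\to0$). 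Adding the two bounds yields the first inequality of the statement; the second inequality is the already-recorded estimate $d_{\text{\rm H}}(\phi_H^1(o_N),o_N)\le\osc_{C^0}(\phi_H^1;o_N)$, and the ``in particular'' clause follows at once since the right-hand side then tends to $0$ uniformly in $x$.

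There is essentially no obstacle here — the Lemma is, as stated, immediate from the definition of $\varphi^H$ — and the only point deserving a word of care is the identification $d(df_H(x),0_x)=\dist(df_H(x),o_N)$ used in the vertical part, i.e.\ that moving to a different fibre cannot shorten the distance from $df_H(x)$ to the zero section. For the intrinsic (Sasaki-type) metric one puts on $T^*N$ this holds outright in a neighbourhood of $o_N$, and in particular whenever $\osc_{C^0}(\phi_H^1;o_N)$ is small — which, via $d_{\text{\rm H}}\le\osc_{C^0}$, forces $|df_H|$ small uniformly on $N_0$ — so it causes no trouble in the intended application to $\max_{x\in N_0}d(\varphi^H(x),x)\to0$. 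If one instead wanted the inequality with no smallness hypothesis at the cost of a worse constant, one could absorb the horizontal discrepancy into a second factor of $d_{\text{\rm H}}$, obtaining $d(\varphi^H(x),x)\le\osc_{C^0}(\phi_H^1;o_N)+2\,d_{\text{\rm H}}(\phi_H^1(o_N),o_N)$.
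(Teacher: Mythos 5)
Your argument is correct and is the natural triangle-inequality decomposition through the intermediate point $\sigma_H(x)\in L_H$ — precisely the unwinding of the definition of $\varphi^H$ that the paper's ``obvious from the definition'' alludes to, with the two summands matching the two terms $d_{\text{\rm H}}(\phi_H^1(o_N),o_N)$ and $\osc_{C^0}(\phi_H^1;o_N)$ in the stated bound. The one metric subtlety you flag, namely that the identification $d(df_H(x),0_x)=\dist(df_H(x),o_N)$ for the Sasaki-type metric $\widetilde g$ holds outright only inside a tube $D^r(T^*N)$ as in Lemma \ref{lem:rNg}, is genuine, and your resolution (automatic in the regime $\osc_{C^0}\to 0$ relevant to the ``in particular'' clause, with the universal fallback $d(\varphi^H(x),x)\le\osc_{C^0}(\phi_H^1;o_N)+2\,d_{\text{\rm H}}(\phi_H^1(o_N),o_N)$ via the $1$-Lipschitz projection $\pi$) is exactly right.
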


\section{Singular locus of the basic phase function and cliff-wall surgery}
\label{sec:conormal}

We first recall two important properties of the Liouville one-form $\theta$:
\begin{enumerate}
\item $\theta$ identically vanishes on any conormal variety.
(See \cite{oh:jdg,kasturi-oh1} for the explanation
on the importance of this fact in relation to the Lagrangian Floer theory on the cotangent bundle.)
\item  For any one form $\alpha$ on $N$, we have $\widehat \alpha^*\theta = \alpha$
where $\widehat \alpha: N \to T^*N$ is the section map associated to the one-form $\alpha$
as a section of $T^*N$. In particular, we have
$$
\sigma_{F}^*\theta = df_{F}
$$
on $N \setminus \operatorname{Sing}(\sigma_{F})$ and on each stratum of
$\operatorname{Sing}(\sigma_{F})$.
\end{enumerate}

We note that the singular locus $S(\sigma_F) \subset \Delta$ is a subset of
the \emph{bifurcation diagram} of the Lagrangian submanifold $\phi_F^1(o_N)$:
The bifurcation diagram is the union of the caustic and the Maxwell set
where the latter is the set of points of which merge the different branches of the
generating function $h$. (See section 4 \cite{givental:singular} for
the definition of bifurcation diagram of Lagrangian submanifold $L \subset T^*N$
in general.)

For a generic $F$, $S(\sigma_F)$ is stratified
into a finite union of smooth submanifolds
$$
\bigcup_{k=1}^{n} S_k(\sigma_F), \quad S_k(\sigma_F)= \Sing_k(\sigma_F), \quad n = \dim N
$$
(see \cite{arnold:normalform,eliash:front,givental:singular} e.g., for such a result)
so that its conormal variety $\nu^*S(\sigma_{F})$ can be defined as a finite union of
conormals of the corresponding strata.
Each stratum $\Sing_k(\sigma_{F})$ has codimension $k$ in $\Delta$.
The stratum for some $k$ could be empty. (See \cite{kash-schapira}. See also
\cite{kasturi,kasturi-oh2}, \cite{nadler-zaslow,nadler} for the usages of such conormal varieties
in relation to Lagrangian Floer theory.)

In $\dim N = 2$, there are two strata to consider,
one $S_1(\sigma_F)$ and the other $S_2(\sigma_F)$.

For $k=1$, each given point $q \in S_1(\sigma_{F})$ has a neighborhood
$A(q) \subset N$ such that $A(q) \setminus S_1(\sigma_{F})$
has two components.
We also note that $\Sigma_F$ carries a natural orientation induced from $N$
by projection when $N$ is orientable and so defines an integral current
in the sense of geometric measure theory \cite{federer}.
When $N$ is oriented, $S_1(F)$ is also
orientable as a finite union of smooth hypersurface. We fix any orientation
on $S_1(F)$.

We denote by $A^\pm(q)$ the closure of
each component of $A(q) \setminus S_1(\sigma_{F})$ in $A(q)$ respectively.
Here we denote by $A^+(q)$ the component whose boundary orientation
on $\del A^+(q)$ coincides with that of the given orientation on $S_1(F)$
and by $\del A^-(q)$ the other one.
Then each of $A^\pm(q)$ is an open-closed domain with the same boundary
$$
\del A^\pm(q) = A(q) \cap S_1(\sigma_{F}).
$$
Denote
\be\label{eq:fpmatq}
df^\pm_{F}(q) = \lim_{p_\pm \to q} df_{F}(p_\pm)
\ee
obtained by taking the limit on $A^\pm(q)$ respectively. The limits
are well-defined from the definition of $\sigma_F$
since $\Im \sigma_F= \Im \widehat{df_F} \subset \phi_F^1(o_N)$ where $\phi_F^1(o_N)$
is a smooth closed submanifold in $T^*N$.

We now prove the following theorem. We refer to
\cite{givental:singular}, \cite{zak-roberts} for a related statement.

\begin{thm}\label{thm:conormal} Let $q \in S_1(F)$.
Then
$$
df^-_{F}(q) - df^+_{F}(q) \in T_q^*N,
$$
which is contained in the conormal space $\nu_q^*[S_1(\sigma_{F});N] \subset T_q^*N$.
\end{thm}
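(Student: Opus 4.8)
The plan is to reduce the statement to the single crucial fact recalled at the beginning of the section: the Liouville one-form $\theta$ vanishes identically on any conormal variety, together with the pullback identity $\sigma_F^*\theta = df_F$ valid on each stratum. The key observation is that the difference $df^-_F(q) - df^+_F(q)$, viewed as an element of $T_q^*N$, should be characterized by how it pairs with tangent vectors to the stratum $S_1(\sigma_F)$: the claim that it lies in the conormal space $\nu_q^*[S_1(\sigma_F);N]$ is precisely the assertion that this covector annihilates $T_q S_1(\sigma_F)$. So the goal becomes: for every tangent vector $v \in T_q S_1(\sigma_F)$, show $\langle df^-_F(q) - df^+_F(q), v \rangle = 0$.

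First I would set up the two local branches. Near $q$, on $A^+(q)$ and $A^-(q)$ respectively, $f_F$ restricts to smooth functions $f^+_F$ and $f^-_F$ whose differentials extend continuously to the common boundary $S_1(\sigma_F) \cap A(q)$, by the displayed limits \eqref{eq:fpmatq}, which are well-defined because $\Im \sigma_F \subset \phi_F^1(o_N)$ and the latter is a smooth closed submanifold. Restricting each branch to the hypersurface $S_1(\sigma_F)$, I get two smooth functions $f^\pm_F|_{S_1(\sigma_F)}$ on a connected piece of the stratum. The heart of the argument is to show these two restrictions differ by a constant, equivalently $d\left(f^+_F|_{S_1(\sigma_F)}\right) = d\left(f^-_F|_{S_1(\sigma_F)}\right)$, i.e. for $v \in T_q S_1(\sigma_F)$ we have $\langle df^+_F(q), v\rangle = \langle df^-_F(q), v\rangle$. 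To see this I would use that both $\sigma_F^+ := \widehat{df^+_F}$ and $\sigma_F^- := \widehat{df^-_F}$, restricted to $S_1(\sigma_F)$, take values in $\phi_F^1(o_N)$, and that along the stratum the two branches of the generating function merge: the merging of branches along the Maxwell-type set forces $f^+_F = f^-_F$ as functions on $S_1(\sigma_F)$ (the common value being the single phase value $f_F(q)$ at the crossing). Hence their differentials along the stratum agree, which says exactly that $df^-_F(q) - df^+_F(q)$ kills $T_q S_1(\sigma_F)$, i.e. lies in $\nu_q^*[S_1(\sigma_F);N]$.

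I expect the main obstacle to be justifying rigorously that $f^+_F$ and $f^-_F$ have the \emph{same} restriction to $S_1(\sigma_F)$, rather than merely that their differentials along the stratum match up to lower-order terms. The subtlety is that $f_F$ is a single continuous function on all of $N$ and the two branches $f^\pm_F$ are local smooth extensions of $f_F|_{A^\pm(q)}$; by continuity of $f_F$ they automatically agree \emph{on} $S_1(\sigma_F)$ itself, so $f^+_F|_{S_1(\sigma_F)} = f_F|_{S_1(\sigma_F)} = f^-_F|_{S_1(\sigma_F)}$. Differentiating this equality of functions on the submanifold $S_1(\sigma_F)$ gives $df^+_F|_{TS_1} = df^-_F|_{TS_1}$, hence $(df^-_F - df^+_F)(q)$ annihilates $T_q S_1(\sigma_F)$, which is the definition of membership in the conormal $\nu_q^*[S_1(\sigma_F);N]$. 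The remaining care is purely technical: confirming that the one-sided limits \eqref{eq:fpmatq} exist and define smooth extensions of the branch data up to the boundary stratum, which follows from the genericity hypothesis on $F$ and the stratification of $\Sing(\sigma_F)$ recalled from \cite{arnold:normalform,eliash:front,givental:singular}, so that $f^\pm_F$ are genuinely $C^1$ up to $S_1(\sigma_F)$ and the restriction-then-differentiate step is legitimate.
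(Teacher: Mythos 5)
Your proof is correct and captures exactly the conceptual core of the paper's argument: continuity and single-valuedness of $f_F$ force the one-sided branches $f^\pm_F$ to agree on $S_1(\sigma_F)$, and differentiating this equality along the stratum shows $df^-_F(q) - df^+_F(q)$ annihilates $T_q S_1(\sigma_F)$. The paper reaches the same conclusion via a more explicit computation---shifting a curve $\gamma$ in $S_1(\sigma_F)$ by $\pm\delta$ along the normal direction, Taylor-expanding $f_F$ along each shifted curve, and passing to the limit $\delta\to 0$ then $t\to 0$---but that detour is really just a hands-on verification of the same ``restrict, note agreement, differentiate'' step you state directly, both proofs resting on the same inputs (continuity of $f_F$ and smoothness of each branch up to the boundary stratum, guaranteed by $\Im\sigma_F\subset\phi_F^1(o_N)$).
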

\begin{proof} Let $\vec v \in T_qS_1(\sigma_{F})$ be any given
tangent vector.
Choose a smooth curve $\gamma: (-\e,\e) \to S_1(\sigma_{F})$
with $\gamma(0) = q$. For any given sufficiently small $\delta \geq 0$,
we define a family of $\delta$-shifted curves
$$
\gamma^\pm_{\delta}(t) = \exp_{\gamma(t)}(\pm \delta \vec n(t)),
$$
where $\exp$ is the normal exponential map of $S_1(\sigma_{F})$
in $N$ and $\vec n(t)$ is the unit normal vector thereof at $\gamma(t)$ towards the domain $A^+(q)$.
Then $\gamma^+_{\delta}$ is mapped into $\Int A^+(q)$ and
$\gamma^-_\delta$ into $\Int A^-(q)$ for all sufficiently small $\delta > 0$.
Note
$$
\gamma^\pm_{0}(t) = \gamma(t)
$$
for $\delta = 0$.
Since $f_{F}: N \to \R$ is a continuous function, we have the uniform
convergence
$$
f_{F}(\gamma^+_{\delta}(t)) - f_{F}(\gamma^-_{\delta}(t)) \to 0
$$
as $\delta \to 0$ over $t \in (-\e,\e)$. Furthermore since $f_F$ is smooth up to the boundary on
each of $A^\pm(q)$ and $df_F$ is uniformly differentiable up to the boundary of $A^\pm(q)$
for either of $\pm$,
\beastar
f_{F}(\gamma^\pm_{\delta}(t))
& = & f_{F}(\gamma^\pm_{\delta}(0)) + t\, df_{F}(\gamma^\pm_{\delta}(0))
((\gamma^\pm_{\delta})'(0)) + O(|t|^2)\\
& = & f_{F}(\gamma^\pm_{\delta}(0))
+ t\, df_{F}(\gamma^\pm_{\delta}(0))\circ D\exp_{\gamma(0)}(\pm \delta \vec n(0))(\gamma'(0)) + O(|t|^2)
\eeastar
where $|O(|t|^2)| \leq C |t|^2$ for a constant $C> 0$ uniformly over $\delta \geq 0$
and $t \in (-\e, \e)$. Here $D\exp_p(\vec n) (\vec v)$ is the derivative
$$
D\exp_p(\vec n) (\vec v): = \frac{d}{dt}\Big|_{t=0} \exp_{\gamma(t)}(\vec n), \, \vec v
= \gamma'(0), \, \gamma(0) = p,
$$
which is nothing but the covariant derivative of the Jacobi field along the geodesic
$t \mapsto \exp_p(t v)$ with the initial vector $\vec n$ at $p$. (See \cite{kacher} for
an elegant exposition on the detailed study of exponential maps.)
By letting $\delta \to 0$ and using the uniformity of the constant $C$ and the continuity
of $f_{F}$, we obtain
\beastar
f_{F}(\gamma(t)) & = & f_{F}(q) + \lim_{\delta \to 0}(t\, df_{F}(\gamma^\pm_{\delta}(0))
((\gamma^\pm_{\delta})'(0))) + O(|t|^2)\\
& = & f_{F}(q) + t \, \lim_{\delta \to 0}df_{F}^\pm(\gamma^\pm_{\delta}(0))\left(D\exp_{\gamma(0)}
(\pm \delta \vec n(0))((\gamma^\pm_{\delta})'(0))\right)) + O(|t|^2).
\eeastar
Then by taking the difference of two equations for $\pm$ and dividing by $t$,
utilizing the convergence $(\gamma^\pm_{\delta})'(0) \to \gamma'(0)$ as $\delta \to 0$ and then
evaluating at $t = 0$, we obtain
\beastar
0 = \lim_{\delta \to 0} \left(df_{F}^+(\gamma^+_{\delta}(0))\circ D\exp_{\gamma(0)}(+\delta \vec n(0)) -
d(f_{F}^-(\gamma^-_{\delta}(0))\circ D\exp_{\gamma(0)}(-\delta \vec n(0))\right)(\gamma'(0)).
\eeastar

Recall that $\gamma(0) = p$ and $\gamma^\pm_{\delta}(0) \to p$,
and $D\exp_p(\pm \delta \vec n(0))$ converges to $D\exp_p(\vec 0)$ as $\delta \to 0$, which
is nothing but the identity map on $\nu_q S_1(\sigma_{F})$ by the standard fact on the exponential
map (see \cite{kacher}). Therefore from this last equality, we derive
$$
\left(df_{F}^+(q) - df_{F}^-(q)\right)(\vec v) = 0
$$
by the definition of $df_{F}^\pm(q)$.
Since this holds for all $\vec v \in T_qS_1(\sigma_{F})$,
the proposition for $k =1$ is proved.
\end{proof}

The boundary orientations
of the two components arising from that of $\Sigma_F$, which in turn
is induced from that of $N$ via $\pi_1$ have opposite orientations. We call
the one whose projection to $S_1(\sigma_F)$ under $\pi_1$ coinciding with the given orientation
the \emph{upper branch} and the one with the opposite one the \emph{lower branch}
and denote them by
$$
\del^+\Sigma_F, \, \del^-\Sigma_F
$$
respectively.

Now let $L_q$ be the line segment connecting the two vectors
$df^\pm_{F}(q)$, i.e.,
\be\label{eq:Lq}
L_{q}: u \in [0,1] \mapsto  df^+_{F}(q) + u(df^-_{F}(q) -df^+_{F}(q)) \subset T_q^*N.
\ee
This is an affine line that is parallel to the conormal space $\nu^*_{q}S_1(\sigma_F)$.
Therefore the union
\be\label{eq:unionLq}
\Sigma_{F;[-+]}: = \bigcup_{q \in S_1(\sigma_F)} L_{q}
\ee
is contained in the translated conormal
\be\label{eq:trans-conormal+}
df_F^+ + \nu^*[S_1(\sigma_F);N]
\ee
Here the bracket $[-+]$ stands for the line segment $L_q$, and
 $\nu^*[S_1(\sigma_F);N]$ is the conormal bundle of $S_1(\sigma_F)$ in $N$.
We would like to point out that since $df_F^+(q) - df_F^-(q) \in \nu^*[S_1(\sigma_F);N]$
we have the equality
$$
df_F^+(q) + \nu_q^*[S_1(\sigma_F);N] = df_F^-(q) + \nu_q^*[S_1(\sigma_F);N]
$$
for all $q \in S_1(\sigma_F)$.
Therefore we can simply write \eqref{eq:trans-conormal+} as
\be\label{eq:trans-conormal}
df_F + \nu^*[S_1(\sigma_F);N]
\ee
unambiguously.

\begin{defn}[Basic Lagrangian selector chain]
We denote by $\sigma_F$ the chain whose support is given by
\be\label{eq:SigmaF}
\supp(\sigma_F): = \overline \Sigma_F
\ee
with the orientation given as above, and define its \emph{micro-support} by
\be\label{eq:microsupport}
SS(\sigma_F): = \overline{df_F + \nu^*[S_1(\Sigma_F);N]}
\ee
imitating the notation from \cite{kash-schapira}.
\end{defn}

The two components of $\del \sigma_{F}$ associated to each connected
component of $S_1(\sigma_F)$ are the graphs of $df_F^\pm$ for the functions
$f_F^\pm$ near $S_1(\sigma_F)$.

Note that each connected component of $S_1(\sigma_F)$ gives rise to
two components of $\del \sigma_{F;[-+]} \cap \sigma_F$.
We can bridge the `cliff' between the two branches
of $\del \sigma_{F}$ over each connected component of $S_1(\sigma_{F})$ and

\begin{defn}[Cliff wall chain] We
define a `cliff wall' chain $\sigma_{F;[-+]}$ whose support is given by the union
$$
\Sigma_{F;[-+]} = \bigcup_{q \in S_1(\sigma_{F})} L_q
$$
Then we define the chain
$\sigma_{F;[-+]}$ similarly as we define $\sigma_F$ by taking its closure in $T^*N$.
\end{defn}
We emphasize that $\sigma_{F;[-+]}$ lies outside the Lagrangian submanifold $\phi_F^1(o_N)$.

By definition, its tangent space at $x = (q,u)$ has natural identification with
$$
T_x \Sigma_{F;[-+]} \cong \nu^*_q S_1(\sigma_F) \oplus T_q S_1(\sigma_F).
$$
Due to Theorem \ref{thm:conormal}, it carries a natural direct sum orientation
$$
o_{\Sigma_{F;[-+]}}(q) = \{df^-_F(q) - df^+_F(q)\} \oplus o_{S_1(\sigma_F)}(q).
$$
Therefore $\Sigma_{F;[-+]}$ carries a natural orientation and defines a current.
Under the natural identification of $T_qN$ with $T_q^*N$ by the dual pairing,
which induces an identification
$$
\nu^*_q S_1(\sigma_F) \oplus T_q S_1(\sigma_F) \cong \nu_q S_1(\sigma_F) \oplus T_q S_1(\sigma_F)
$$
as an oriented vector space. Then we have the relation
\be\label{eq:orient}
\del \Sigma_F = - \del \Sigma_{F;[-+]}
\ee
along the intersection $\del \Sigma_F \cap \del \Sigma_{F;[-+]}$.
\begin{rem}
\begin{enumerate}
\item We would like to note that the singular locus $S(\sigma_F) \subset \Delta$ is a subset of
the bifurcation diagram of the Lagrangian submanifold $\phi_F^1(o_N)$:
The bifurcation diagram is the union of the caustic and the Maxwell set
where the latter is the set of points of which merge the different branches of the
generating function $h$. (See section 4 \cite{givental:singular} for
the definition of bifurcation diagram of Lagrangian submanifold $L \subset T^*N$
in general.) But this detailed structure does not play any role in our proof
except the one described.
\item
However we would like to note that each fiber of $SS(\sigma_F)$ is
an affine space
$$
df_F(q) + \nu_q^*[S_1(\Sigma_F);N]
$$
at $q \in S_1(\Sigma_F)$, not a linear space. In fact, if we incorporate the
orientation into consideration, one can refine this definition further
to the `half space' instead of the full affine space. We denote this refinement
by $SS^+(\sigma_F)$. Then at a point $q$ in the lower dimensional
strata, it will be a `wedge domain', i.e., the intersection of several
space of this type. (See \cite{kasturi-oh1,kasturi-oh2} for a usage of such
domains in their quantization program of Eilenberg-Steenrod axiom.)
We will come back to further discussion on the detailed structure of singularities elsewhere.
\end{enumerate}
\end{rem}

Next we consider the case of $S_2(\sigma_F)$ and its relationship with $\sigma_F$ and $S_1(\sigma_F)$.
Note that for a generic choice of $F$, $S_2(\sigma_F)$ consists of a finite number of points in $N$
consisting of either a caustic point or a triple intersection point of
the Maxwell set (see \cite{arnold:normalform},
section 4 \cite{givental:singular} and 7.1 \cite{zak-roberts}).

The following proposition can be also derived from the general structure
theorem of generic singularities of Lagrangian maps.
We restrict the proposition to $\dim N =2$ here postponing the precise statement
for the high dimensional cases elsewhere.

\begin{prop}\label{prop:zak-roberts}
Assume $\dim N =2$.
For a generic choice of $F$, the boundary of $\sigma_F + \sigma_{F;[-+]}$
is a finite union of triangles each of which is formed by the three line segments
$L_q$ given in \eqref{eq:Lq} associated to a
triple intersection point $q$ of $S(\sigma_F)$ contained in $S_2(\sigma_F)$.
Furthermore each triangle is the boundary of a 2-simplex
contained in the fiber $T_q^*N$.
\end{prop}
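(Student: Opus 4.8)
The plan is to reduce Proposition \ref{prop:zak-roberts} to Arnold's classification of generic singularities of Lagrangian projections in $\dim N = 2$, together with the conormality statement of Theorem \ref{thm:conormal}. First I would recall that for generic $F$ the only stable singularities of the Lagrangian projection $\phi_F^1(o_N)\to N$ in dimension two are folds and cusps (the $A_2$ and $A_3$ singularities), and correspondingly the Maxwell set $S(\sigma_F)$ is a finite union of smooth arcs $S_1(\sigma_F)$ meeting at isolated points of $S_2(\sigma_F)$, where at such a point $q$ either three branches of the Maxwell arcs meet transversally (a triple point) or an arc of the Maxwell set terminates at a cusp point of the caustic. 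I would then localize the analysis near a fixed point $q_0 \in S_2(\sigma_F)$: by the classification, in a small punctured neighborhood of $q_0$ the function $f_F$ is obtained by taking, pointwise in $q$, the minimum (or the appropriate mini-max selected branch) of finitely many smooth local branches $h_1,\dots,h_m$ of the multi-valued generating function, and the Maxwell arcs emanating from $q_0$ are precisely the loci where two of the selected branches coincide.

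Next I would treat the triple-point case. Near a triple point $q_0$ there are three smooth branches $h_1,h_2,h_3$, and the three Maxwell arcs $S_1^{(12)}, S_1^{(23)}, S_1^{(13)}$ are the loci $h_i = h_j$; they emanate from $q_0$ and divide the neighborhood $A(q_0)$ into three sectors, on each of which $f_F$ equals one of the $h_i$. Consequently the graph $\Sigma_F = \Graph df_F$ over $A(q_0)\setminus S(\sigma_F)$ consists of three smooth pieces, the pieces over adjacent sectors being joined along the cliff-wall segments $L_q$ sitting in the fibers $T_q^*N$ for $q$ ranging over the three arcs. As $q\to q_0$ along any of the three arcs, Theorem \ref{thm:conormal} says $df_F^- (q) - df_F^+(q)$ lies in $\nu_q^*[S_1(\sigma_F);N]$, and continuity of $f_F$ forces the three limiting covectors $df_{h_1}(q_0), df_{h_2}(q_0), df_{h_3}(q_0) \in T_{q_0}^*N$ to be pairwise distinct (otherwise two branches would be tangent, contradicting genericity/stability). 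These three points in the plane $T_{q_0}^*N$ are joined pairwise by the three degenerating families $L_q$, so the boundary of $\sigma_F + \sigma_{F;[-+]}$ over the triple point is exactly the triangle with vertices $df_{h_i}(q_0)$; and since that triangle lies entirely in the two-dimensional fiber $T_{q_0}^*N$ it bounds a 2-simplex there. I would then verify the orientation bookkeeping: using \eqref{eq:orient}, the oriented boundary contributions of $\sigma_F$ and of $\sigma_{F;[-+]}$ cancel along the smooth portions of the Maxwell arcs, leaving only the three triangle edges as the total boundary $\del(\sigma_F + \sigma_{F;[-+]})$.

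Then I would dispose of the cusp-point case, arguing that it contributes nothing to $\del(\sigma_F+\sigma_{F;[-+]})$: at a cusp point only two branches are involved and the Maxwell arc terminates there while the two selected branches $df_F^\pm$ merge continuously (the $A_3$ normal form $h = u^4/4 + x u^2/2 + y u$ shows $df^-_F(q) - df^+_F(q)\to 0$ as $q$ approaches the cusp), so the degenerate segment $L_q$ shrinks to a point and the boundary closes up smoothly there with no triangle produced. Assembling the local pictures over the finitely many points of $S_2(\sigma_F)$ and observing that $\del\sigma_F$ and $\del\sigma_{F;[-+]}$ cancel along all of $S_1(\sigma_F)$ away from $S_2(\sigma_F)$, I conclude that $\del(\sigma_F+\sigma_{F;[-+]})$ is the disjoint union of these triangles, one per triple point.

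The main obstacle I anticipate is the careful matching of branches and orientations near $q_0$: one must be sure that the \emph{selected} branch (the mini-max/basic-phase-function branch constructed via Floer theory, not an arbitrary choice) is locally one of the smooth branches on each sector, that the cyclic order of the three sectors around $q_0$ matches the cyclic order of the three vertices $df_{h_i}(q_0)$ in $T_{q_0}^*N$, and that the induced orientations on the three edges $L_q$ fit together to bound the 2-simplex coherently. This is precisely where Arnold's generic classification in $\dim N = 2$ (cited as \cite{givental:singular,arnold:wavefront,zak-roberts}) is indispensable, since it pins down the local model and hence all of this combinatorial and orientation data; the content beyond the classification is Theorem \ref{thm:conormal}, which guarantees that each bridging family $L_q$ is a genuine line segment parallel to the conormal direction and therefore that the resulting polygon is a non-degenerate planar triangle in the fiber.
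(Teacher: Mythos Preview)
Your proposal is correct and takes essentially the same approach as the paper: both reduce the statement to Arnold's classification of generic Lagrangian singularities in dimension~2 (folds, cusps, and the resulting structure of the Maxwell set). The paper's own proof is in fact a one-line citation of that classification theorem \cite{arnold:normalform,arnold:wavefront,givental:singular,zak-roberts}, whereas you have spelled out the local analysis at triple points and cusp points and the orientation bookkeeping that the paper leaves implicit; this extra detail is sound and does not deviate from the paper's route.
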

\begin{proof} This is an immediate consequence of the classification
theorem of generic singularities in dimension 2 of Lagrangian maps
originally proved by Arnold \cite{arnold:normalform}.
(See also p. 55 and Figure 43 \cite{arnold:wavefront},
section 4 \cite{givental:singular} and section 7.1 \cite{zak-roberts}.)
\end{proof}

Now we define $\sigma_{F;\Delta^2}$ to be the union of these 2 simplices, and set
$$
\sigma_F^{add} = \sigma_F + \sigma_{F;[-+]} + \sigma_{F;\Delta^2}.
$$
Then by construction, $\sigma_F^{add}$ forms a mod-2 cycle.

This finishes the description of the
basic Lagrangian cycle. A similar description can be given in the higher dimensional cases,
which we will study elsewhere. This enables us to define the
following important Lagrangian cycle.

\begin{defn}[Basic Lagrangian cycle and cliff-wall surgery]\label{defn:selectorcycle}
Let $\dim N = 2$. We call the cycle $\sigma_{F}^{add}$ the basic Lagrangian cycle of
$\phi_F^1(o_N)$ (associated to the basic Lagrangian selector $\sigma_{F}$).
We call the replacement of $\phi_F^1(o_N)$ by the $\Sigma_F^{add}$ the
\emph{cliff-wall surgery} of the $\phi_F^1(o_N)$.
\end{defn}

\begin{rem}
\begin{enumerate}
\item
We also refer to \cite{kasturi-oh1,kasturi,kasturi-oh2}
for a usage of the general conormal variety of an open-closed domain with boundary and corners, which
also naturally occurs in micro-local analysis and in stratified Morse theory \cite{kash-schapira}.
\item
The basic Lagrangian cycle seems to be a good
replacement of non-graph type Lagrangian submanifold $\phi_F^1(o_N)$ in general
for the study of various questions arising in Hamiltonian dynamics and
symplectic topology. We hope to elaborate this point elsewhere.
\end{enumerate}
\end{rem}

\begin{rem} We believe that this surgery will play an important role in
the study of homotopy invariance of spectral invariants for the topological Hamiltonian paths
\cite{oh:hameo2}, which we hope to address elsewhere.
\end{rem}

\section{Lagrangian Floer homology and spectral invariants}
\label{sec:Lag-spectral}

In this section, we first briefly recall the construction of Lagrangian spectral invariants
$\rho^{lag}(H;a)$ for $L_H = \phi_H^1(o_N)$ performed by the author in \cite{oh:cag}.
A priori, this invariant may depend on $H$, not just on $L_H$ itself.
In \cite{oh:cag}, we prove that
\be\label{eq:rhoH=rhoK}
\rho^{lag}(H;a) = \rho^{lag}(F;a)
\ee
for all $a \in H^*(N;\Z)$ if $L_H = L_F$, \emph{but modulo the addition of a constant}
and then somewhat ad-hoc normalization to remove this ambiguity of a constant.

\subsection{Definition of Lagrangian spectral invariants}

Consider the zero section $o_N$ and the space
$$
\CP(o_N,o_N) = \{\gamma:[0,1] \to T^*N \mid \gamma(0), \, \gamma(1) \in o_N \}.
$$
The set of generators of $CF(H;o_N,o_N)$ is that of solutions
$$
\dot z = X_H(t,z(t)), \, z(0), \, z(1) \in o_N
$$
and its Floer differential is defined by counting the number of solutions of
\be\label{eq:CRHJ}
\begin{cases}
\dudtau + J\left(\dudt - X_H(u)\right) = 0\\
u(\tau,0), \, u(\tau,1) \in o_N.
\end{cases}
\ee
An element $\alpha \in CF(H;o_N,o_N)$ is expressed as a finite sum
$$
\alpha = \sum_{z \in \CC hord(H;o_N,o_N)} a_z [z], \quad a_z \in \Z.
$$
We define the level of the chain $\alpha$ by
\be\label{eq:lambdaH-lag}
\lambda_H(\alpha): = \max_{z \in \supp \alpha} \{\CA^{cl}_H(z)\}.
\ee
For given non-zero cohomology class $a \in H^*(N,\Z)$, we consider
its Poincar\'e dual $[a]^\flat:= PD(a) \in H_*(N,\Z)$  and
its image under the canonical isomorphism
$$
\Phi: H_*(N,\Z) \to HF_*(H,J;o_N,o_N).
$$
\begin{defn} Let $(H,J)$ be a Floer regular pair relative to
$(o_N,o_N)$ and let $(CF(H),\del_{(H,J)})$ be its associated Floer complex.
For any $0 \neq a \in H^*(N,\Z)$, we define
\be\label{eq:rhoHa}
\rho^{lag}(H;a) = \inf_{\alpha \in \Phi(a^\flat)}\{\lambda_H(\alpha)\}.
\ee
\end{defn}

One important result is the following basic property, called \emph{spectrality} in \cite{oh:alan},
which is not explicitly stated in \cite{oh:jdg} but can be easily derived by a compactness
argument. (See the proof in \cite{oh:alan} given in the Hamiltonian context.)

\begin{prop}\label{prop:tight} Let $H=H(t,x)$ be any, not necessarily nondegenerate,
smooth Hamiltonian. Then for any $0 \neq a \in H^*(N,\Z)$, there exists a
point $x \in L_H \cap o_N$ such that
$$
\CA^{cl}_H(z_x^H) = \rho^{lag}(H;a).
$$
In particular, $\rho^{lag}(H;a) \in \Spec(H;N)$.
\end{prop}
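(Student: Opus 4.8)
The plan is to first prove the statement when $\phi_H^1(o_N)$ is transverse to $o_N$ and then remove the transversality hypothesis by a limiting argument, exactly paralleling the Hamiltonian-context proof in \cite{oh:alan}. In the transverse case $L_H \cap o_N$ is a finite set, so $\Spec(H;N)$ is a finite subset of $\R$. Every generator $z$ of $CF(H;o_N,o_N)$ satisfies $z = z_{z(1)}^H$ with $z(1)\in L_H\cap o_N$, so $\CA^{cl}_H(z)\in \Spec(H;N)$; hence $\lambda_H(\alpha)=\max_{z\in\supp\alpha}\CA^{cl}_H(z)\in\Spec(H;N)$ for every nonzero chain $\alpha$, and in particular $\lambda_H(\alpha)\ge \min\Spec(H;N)>-\infty$. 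Since $a\neq 0$ and $\Phi$ is an isomorphism, the class $\Phi(a^\flat)\in HF_*(H,J;o_N,o_N)$ is nonzero, so every $\alpha$ representing it is a nonzero chain; therefore $\rho^{lag}(H;a)=\inf_{\alpha\in\Phi(a^\flat)}\lambda_H(\alpha)$ is the infimum of a function taking only finitely many (real) values and is thus attained and lies in $\Spec(H;N)$. By definition of $\Spec(H;N)$ this gives $\rho^{lag}(H;a)=\CA^{cl}_H(z_x^H)$ for some $x\in L_H\cap o_N$.

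For a general smooth $H$, I would choose smooth Hamiltonians $H_j\to H$ in $C^\infty$ with $\|H_j-H\|\to 0$, each $H_j$ chosen so that $\phi_{H_j}^1(o_N)\pitchfork o_N$ (and $J_j$ generic, so that $(H_j,J_j)$ is Floer-regular) and with $\supp X_{H_j}\subset D^R(T^*N)$ for a disk bundle independent of $j$. The Hofer-type estimate $|\rho^{lag}(H_j;a)-\rho^{lag}(H;a)|\le \|H_j-H\|$, the exact analog of \eqref{eq:fH} proved in \cite{oh:cag}, then yields $\rho^{lag}(H_j;a)\to\rho^{lag}(H;a)$ (this is consistent with the definition of $\rho^{lag}$ for degenerate $H$ by continuity). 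Applying the transverse case to each $H_j$ produces Hamiltonian chords $z_j:[0,1]\to T^*N$ with $z_j(0),z_j(1)\in o_N$, $\dot z_j = X_{H_j}(t,z_j(t))$, and $\CA^{cl}_{H_j}(z_j)=\rho^{lag}(H_j;a)$.

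I would then run a compactness argument on the sequence $z_j$. Because $o_N$ is compact and $\supp X_{H_j}\subset D^R(T^*N)$ for all $j$, the flow $\phi_{H_j}^t$ fixes $T^*N\setminus D^R(T^*N)$ pointwise, so $z_j([0,1])\subset D^R(T^*N)$ for all $j$; combined with the uniform $C^1$ bound coming from $X_{H_j}\to X_H$ in $C^0_{loc}$, Arzel\`a--Ascoli gives a subsequence converging in $C^0$, hence (via the ODE) in $C^1$, to a path $z_\infty$ with $z_\infty(0),z_\infty(1)\in o_N$ and $\dot z_\infty = X_H(t,z_\infty(t))$. Continuity of $\CA^{cl}$ under $C^1$-convergence of paths together with $C^0$-convergence of the Hamiltonians gives $\CA^{cl}_H(z_\infty)=\lim_j \CA^{cl}_{H_j}(z_j)=\lim_j\rho^{lag}(H_j;a)=\rho^{lag}(H;a)$. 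Setting $x:=z_\infty(1)\in \phi_H^1(o_N)\cap o_N = L_H\cap o_N$, uniqueness of the Hamiltonian trajectory with final point $x$ forces $z_\infty = z_x^H$, so $\CA^{cl}_H(z_x^H)=\rho^{lag}(H;a)$ and in particular $\rho^{lag}(H;a)\in\Spec(H;N)$.

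The step I expect to be the main obstacle is the compactness of the chords $z_j$ in the open manifold $T^*N$: one must arrange the approximating Hamiltonians to have uniformly compactly supported vector fields so that the $z_j$ cannot run off to infinity, and one must verify that the limiting chord's endpoint lies in $L_H\cap o_N$, not merely in $o_N$ — which is precisely where the identity $z_\infty = z_{z_\infty(1)}^H$ and the definition $L_H=\phi_H^1(o_N)$ enter. Everything else (finiteness of $\Spec$ in the transverse case, the Hofer estimate, continuity of $\CA^{cl}$) is routine.
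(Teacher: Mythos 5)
Your proof is correct and takes essentially the same route the paper indicates: the paper does not spell out a proof but points to ``a compactness argument'' and the Hamiltonian analogue in \cite{oh:alan}, and your argument (finite spectrum in the transverse case, Hofer-Lipschitz continuity of $\rho^{lag}(\cdot\,;a)$, then Arzel\`a--Ascoli on the approximating chords $z_j$ staying in a fixed $D^R(T^*N)$) is precisely that argument adapted to the cotangent-bundle Lagrangian setting. If anything, your case analysis is cleaner here than in \cite{oh:alan} because the classical action $\CA^{cl}_H$ is single-valued on $\CP(T^*N;o_N)$, so $\Spec(H;N)$ is genuinely finite for transverse $H$ and no period-group or recapping issues arise.
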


\subsection{Comparison of two Cauchy-Riemann equations}
\label{sec:comparison}

So far we have looked at the Hamiltonian-perturbed Cauchy-Riemann equation \eqref{eq:CRHJ},
which we call the {dynamical version
as in \cite{oh:jdg}.

On the other hand, one can also consider the genuine Cauchy-Riemann equation
\be\label{eq:dvtildeJ}
\begin{cases}\dvdtau + J^H \dvdt = 0 \\
v(\tau ,0) \in \phi_H^1(o_N), \,
v(\tau ,1) \in o_N
\end{cases}
\ee
for the path $u:\R \to \CP(o_N,L)$ where $L = \phi_H^1(o_N)$ and
$$
\CP(o_N,L) = \{\gamma: [0,1] \to T^*N \mid \gamma (0) \in L, \,  \gamma(1) \in o_N\}
$$
and $J^H_t = (\phi_H^t\phi_H^{-1})_*J_t$. We call this version the geometric version.

We now describe the geometric version of the Floer homology in some more details.
We refer readers to \cite{oh:jdg} for the discussion on the further comparison of the two
versions in the point of moduli spaces and others. The upshot is that there is
a filtration preserving isomorphisms between the dynamical version and
the geometric version of the Lagrangian Floer theories.

We denote by $\widetilde \CM(L_H,o_N;J^H)$ the set
of finite energy solutions and $\CM(L_H,o_N;J^H)$ to be its quotient by $\R$-translations.
This gives rise to the geometric version of the Floer homology
$HF_*(o_N, \phi_H(o_N), \widetilde J)$
of the type \cite{floer:Morse,oh:cag} whose generators are the intersection
points of $o_N\cap \phi_H (o_N)$.
An advantage of this version is that it depends only
on the Lagrangian submanifold $L = \phi_H(o_N)$, only loosely on $H$.
(The author proved in \cite{oh:cag} that $\rho (H;a)$ is the invariant of
$L_H = \phi_H(o_N)$ up to this normalization by comparing
these two versions of the Floer theory in \cite{oh:jdg,oh:cag}.)

The following is a straightforward to check but is a crucial lemma.

\begin{lem}\label{lem:equiv} Let $L = \phi_H^1(o_S)$.
\begin{enumerate}
\item The map $\Phi_H: o_N \cap L \to \CC hord(H;o_N,o_N)$ defined by
$$
x \mapsto z_x^H(t)= \phi_H^t\left((\phi_H^1)^{-1}(x)\right)
$$
gives rise to the one-one correspondence between the set
$o_N \cap L \subset \CP(o_N,L)$ as constant paths and the set of
solutions of Hamilton's equation of $H$.
\item The map $a \mapsto \Phi_H(a)$ also defines a one-one
correspondence from the set of solutions of \eqref{eq:CRHJ} and that of
\be\label{eq:dvJH}
\begin{cases}
\dvdtau + J^H \dvdt = 0 \\
v(\tau ,0) \in \phi_H(o_N),  \,  v(\tau ,1) \in o_N
\end{cases}
\ee
where $J^H = \{ J^H_t \} , J^H_t: = (\phi^t_H (\phi^1_H)^{-1})^* J_t $.
Furthermore, \eqref{eq:dvJH} is regular if and only if \eqref{eq:CRHJ} is regular.
\end{enumerate}
\end{lem}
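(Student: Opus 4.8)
The plan is to treat the two assertions by the standard devices: (1) is a direct unravelling of definitions, and (2) is the ``moving frame'' gauge transformation that cancels the Hamiltonian term in the Cauchy--Riemann equation.

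For (1), I would first note that for $x\in o_N\cap L$ the path $z_x^H(t)=\phi_H^t((\phi_H^1)^{-1}(x))$ is by construction a flow line of $X_H$, with $z_x^H(0)=(\phi_H^1)^{-1}(x)\in o_N$ (because $x\in L=\phi_H^1(o_N)$) and $z_x^H(1)=x\in o_N$; hence $z_x^H\in\CC hord(H;o_N,o_N)$. Conversely, given $z\in\CC hord(H;o_N,o_N)$ put $x:=z(1)\in o_N$; then $x=\phi_H^1(z(0))\in L$, so $x\in o_N\cap L$, and uniqueness of solutions of $\dot z=X_H(t,z)$ forces $z=z_x^H$. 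Since $z_x^H(1)=x$, the map $\Phi_H$ is injective, hence a bijection, and it evidently carries the constant path at $x$ in $\CP(o_N,L)$ to $z_x^H$.

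For (2), I would introduce the smooth family of symplectomorphisms $\Theta^t:=\phi_H^1\circ(\phi_H^t)^{-1}$ of $T^*N$, which satisfies $\Theta^0=\phi_H^1$ and $\Theta^1=\mathrm{id}$, and set $v(\tau,t):=\Theta^t(u(\tau,t))$, with inverse substitution $u(\tau,t)=\phi_H^t((\phi_H^1)^{-1}(v(\tau,t)))$. A short computation, using $\tfrac{\partial}{\partial t}(\phi_H^t)^{-1}(y)=-d((\phi_H^t)^{-1})(X_{H_t}(y))$, gives $\dvdt=d\Theta^t\bigl(\dudt-X_{H_t}(u)\bigr)$ and $\dvdtau=d\Theta^t(\dudtau)$, whence $u$ solves the first line of \eqref{eq:CRHJ} if and only if $v$ solves $\dvdtau+\bigl((\Theta^t)_*J_t\bigr)\dvdt=0$; one then checks $(\Theta^t)_*J_t=(\phi_H^t(\phi_H^1)^{-1})^*J_t=J^H_t$. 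The boundary conditions $u(\tau,0),u(\tau,1)\in o_N$ translate into $v(\tau,0)=\phi_H^1(u(\tau,0))\in\phi_H^1(o_N)$ and $v(\tau,1)=u(\tau,1)\in o_N$, which is precisely \eqref{eq:dvJH}; applied to $z_x^H$ the same substitution returns the constant path at $x$, consistent with (1). This gives the asserted one-one correspondence of solution sets.

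For the regularity statement I would use that $X_H$ is compactly supported, so each $\Theta^t$ equals the identity outside one fixed compact set; then $u\mapsto v$ is a diffeomorphism between the relevant Banach manifolds of finite-energy maps (the Sobolev norms being uniformly equivalent, so finite energy is preserved) which intertwines the two Floer sections. Hence the linearizations $D_u$ and $D_v$ are conjugate by the invertible zeroth-order bundle map induced by $d\Theta^t$, so $\coker D_u\cong\coker D_v$ and one is surjective iff the other is, i.e. \eqref{eq:dvJH} is regular at $v$ iff \eqref{eq:CRHJ} is regular at $u$. I do not expect a genuine obstacle here: the work is entirely bookkeeping of pullback and sign conventions --- confirming that the choice $\Theta^t=\phi_H^1(\phi_H^t)^{-1}$ produces exactly the boundary pair $(\phi_H^1(o_N),o_N)$ and the stated $J^H_t$ --- together with the routine verification that compact support of $X_H$ makes the gauge change a bounded equivalence on the Sobolev completions so that finite-energy solutions genuinely correspond.
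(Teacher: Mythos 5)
Your proof is correct, and it is exactly the argument the paper has in mind: the paper states this lemma as ``straightforward to check'' and refers to \cite{oh:jdg}, and what you have written is the standard gauge transformation that that reference carries out. Part (1) is the expected unwinding of the definitions using uniqueness of ODE solutions, and in part (2) your choice $\Theta^t = \phi_H^1\circ(\phi_H^t)^{-1}$, with $\Theta^0=\phi_H^1$, $\Theta^1=\mathrm{id}$, is precisely the transformation that sends the boundary pair $(o_N,o_N)$ to $(\phi_H^1(o_N),o_N)$ and kills the Hamiltonian perturbation term, and your identification $(\Theta^t)_*J_t = ((\Theta^t)^{-1})^*J_t = (\phi_H^t(\phi_H^1)^{-1})^*J_t = J^H_t$ matches the lemma's stated almost complex structure. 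The regularity argument via conjugation of linearizations by the zeroth--order isomorphism induced by $d\Theta^t$, noting that compact support of $X_H$ keeps $\Theta^t$ bounded so that finite energy and Sobolev setups are preserved, is also the standard and correct justification.
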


Once we have transformed \eqref{eq:CRHJ} to \eqref{eq:dvJH}, we can further
deform $J^H$ to the constant family $J_0$ and consider
\be\label{eq:dvJ0}
\begin{cases}
\dvdtau + J_0 \dvdt = 0 \\
v(\tau ,0) \in \phi_H(o_N),  \, v(\tau ,1) \in o_N.
\end{cases}
\ee
This latter deformation preserves the filtration of
the associated Floer complexes \cite{oh:jdg}. A big advantage of
considering this equation is that it enables us to study the behavior of
spectral invariants for a sequence of $L_i$ converging to $o_N$
\emph{in weak Hamiltonian topology}.

The following proposition provides the action functional associated to
the equation \eqref{eq:dvJH}, \eqref{eq:dvJ0}, which will give a
natural filtration associated Floer homology $HF(L,o_N)$.

\begin{prop}\label{prop:effective} Let $L$ and $h_L$ be as in Lemma \ref{lem:generating}.
Let $\Omega(L,o_N;T^*N)$ be the space of paths $\gamma:[0,1] \to \R$
satisfying $\gamma(0) \in L, o_N, \, \gamma(1) \in o_N$. Consider the
effective action functional
$$
\CA^{\text{\rm eff}}(\gamma) = \int \gamma^*\theta + h_H(\gamma(0)).
$$
Then $d\CA^{\text{\rm eff}}(\gamma)(\xi) = \int_0^1
\omega(\xi(t),\dot\gamma(t)) \, dt$. In particular,
\be\label{eq:effCA=CA}
\CA^{\text{\rm eff}}(c_x) = h_H(x) = \CA^{cl}_H(z_x^H)
\ee
for the constant path $c_x \equiv x \in L\cap o_N$ i.e., for any
critical path $c_x$ of $\CA^{\text{\rm eff}}$.
\end{prop}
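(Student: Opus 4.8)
The plan is to verify the first-variation formula directly and then specialize to constant paths. First I would write $\CA^{\text{\rm eff}}(\gamma) = \int \gamma^*\theta + h_H(\gamma(0))$ and differentiate term by term along a variation $\gamma_s$ with $\partial_s\gamma_s|_{s=0} = \xi$. For the $\int \gamma^*\theta$ piece, the standard computation (Stokes/Cartan) gives
\[
\frac{d}{ds}\Big|_{s=0}\int \gamma_s^*\theta = \int_0^1 \omega_0(\xi(t),\dot\gamma(t))\,dt + \langle\theta(\gamma(1)),\xi(1)\rangle - \langle\theta(\gamma(0)),\xi(0)\rangle,
\]
using $\omega_0 = -d\theta$ with the sign conventions fixed in the Notations section. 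The boundary term at $t=1$ vanishes because $\gamma(1)\in o_N$ and $\theta$ vanishes on the zero section (indeed on any conormal, as recalled at the start of Section \ref{sec:conormal}). The boundary term at $t=0$ is $-\langle\theta(\gamma(0)),\xi(0)\rangle$, and since $\xi(0)\in T_{\gamma(0)}L$, this is exactly $-\langle i_H^*\theta, \xi(0)\rangle = -\langle dh_H(\gamma(0)),\xi(0)\rangle$ by the defining property \eqref{eq:i*theta} of the basic generating function, recalling $L = \phi_H^1(o_N) = L_H$. On the other hand, $\frac{d}{ds}|_{s=0} h_H(\gamma_s(0)) = \langle dh_H(\gamma(0)),\xi(0)\rangle$, which precisely cancels the leftover boundary term. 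Hence $d\CA^{\text{\rm eff}}(\gamma)(\xi) = \int_0^1 \omega_0(\xi(t),\dot\gamma(t))\,dt$, as claimed; in particular the critical points of $\CA^{\text{\rm eff}}$ are exactly the constant paths $c_x \equiv x$ with $x \in L\cap o_N$, matching the generators of the Floer complex for \eqref{eq:dvJ0}.

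For the identity \eqref{eq:effCA=CA}, I would evaluate $\CA^{\text{\rm eff}}$ on such a constant path: $\int c_x^*\theta = 0$ since $c_x$ is constant, so $\CA^{\text{\rm eff}}(c_x) = h_H(x)$. The equality $h_H(x) = \CA^{cl}_H(z_x^H)$ is then just the definition \eqref{eq:hH} of the basic generating function in the moving frame together with the characterization in \eqref{eq:fversush} of Theorem \ref{thm:ohcag}, i.e.\ it holds for the appropriate Hamiltonian chord $z_x^H$ ending at $x$. This compares the effective action with the classical action and shows the filtration on $HF(L,o_N)$ coming from $\CA^{\text{\rm eff}}$ agrees with the one coming from $\CA^{cl}_H$.

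The only delicate point — and the one I would be most careful about — is the bookkeeping of signs and of which boundary of the path carries which Lagrangian: in \eqref{eq:dvJH} the endpoint $v(\tau,0)$ lies on $L$ and $v(\tau,1)$ on $o_N$, so the nonvanishing boundary contribution from $\int\gamma^*\theta$ is at $t=0$, and it is this term that must be absorbed by $h_H(\gamma(0))$ rather than, say, $h_H(\gamma(1))$. With the conventions of \cite{oh:alan,oh:hameo2} adopted in the paper this matches, but one should double-check that the sign in front of $h_H$ in the definition of $\CA^{\text{\rm eff}}$ is consistent with $\omega_0 = -d\theta$ and with the first-variation formula \eqref{eq:1stvariation}. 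Everything else is a routine application of Stokes' theorem and the defining properties \eqref{eq:i*theta}, \eqref{eq:hH}, \eqref{eq:fversush} already recorded in the excerpt.
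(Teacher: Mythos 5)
Your argument is the intended one, and the paper itself does not spell out a proof of Proposition \ref{prop:effective} (it is stated and then followed only by a remark pointing to section~2.4 of \cite{oh:jdg} and Definition~3.1 of \cite{kasturi-oh1}), so you are correctly supplying the routine verification that was left implicit. The essential content — that the term $h_H(\gamma(0))$ cancels the $t=0$ boundary contribution via $i_H^*\theta = dh_H$ from \eqref{eq:i*theta}, that the $t=1$ boundary term drops because $\theta$ vanishes identically along $o_N$, and that evaluating on a constant path $c_x$ gives $\CA^{\text{\rm eff}}(c_x)=h_H(x)=\CA^{cl}_H(z_x^H)$ by \eqref{eq:hH} — is all there and is the right skeleton.

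One caveat on signs, which you already flag as the delicate point. Specializing the paper's own first-variation formula \eqref{eq:1stvariation} to $H=0$ gives
\[
\frac{d}{ds}\Big|_{s=0}\int \gamma_s^*\theta
= \int_0^1 \omega_0(\dot\gamma(t),\xi(t))\,dt
+ \langle\theta(\gamma(1)),\xi(1)\rangle
- \langle\theta(\gamma(0)),\xi(0)\rangle,
\]
i.e.\ the bulk term is $\omega_0(\dot\gamma,\xi)$, not $\omega_0(\xi,\dot\gamma)$ as you wrote. Carrying the computation through, one lands on $d\CA^{\text{\rm eff}}(\gamma)(\xi)=\int_0^1\omega_0(\dot\gamma,\xi)\,dt$, which is the negative of the expression stated in the Proposition. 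This sign discrepancy is already present in the Proposition's statement relative to \eqref{eq:1stvariation}; your intermediate line $\int_0^1\omega_0(\xi,\dot\gamma)\,dt+\cdots$ simply reproduces it rather than resolving it. The discrepancy is harmless for everything the Proposition is used for — the zero set of $d\CA^{\text{\rm eff}}$ and the filtration on $CF(L_H,o_N)$ are unaffected by an overall sign — but you should not present the bulk sign as a consequence of \eqref{eq:1stvariation} when in fact those two formulas disagree; better to either track the sign honestly and note the mismatch, or state which gradient-flow convention is being used.

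One small economy: the final equality $h_H(x)=\CA^{cl}_H(z_x^H)$ follows directly from the definitions \eqref{eq:tildehH} and \eqref{eq:hH}; invoking \eqref{eq:fversush} from Theorem \ref{thm:ohcag} is valid but brings in the basic phase function $f_H$ unnecessarily.
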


We would like to highlight the
presence of the `boundary contribution' $h_H(\gamma(0))$ in the definition of
the effective action functional above: This addition is needed to make the
Cauchy-Riemann equation \eqref{eq:dvtildeJ}
or \eqref{eq:dvJ0} into a \emph{gradient trajectory equation} of
the relevant action functional. We refer readers to section 2.4 \cite{oh:jdg}
and Definition 3.1 \cite{kasturi-oh1} and the discussion
around it for the upshot of considering the effective action functional and
its role in the study of Cauchy-Riemann equation.

\subsection{Triangle inequality for Lagrangian spectral invariants}
\label{subsec:triangle}

We recall from, \cite{schwarz},
\cite{oh:alan} that the triangle inequality of the Hamiltonian spectral invariants
$$
\rho^{ham}(H \# F;a\cdot b) \leq \rho^{ham}(H;a) + \rho^{ham}(F;b)
$$
for the product Hamiltonian $H \# F$ relies on the homotopy invariance
property of spectral invariants which in turn relies on the existence of
canonical normalization procedure of Hamiltonians on closed $(M,\omega)$ which is
nothing but the mean normalization. On the other hand, one can directly prove
$$
\rho^{ham}(H * F;a \cdot b) \leq \rho^{ham}(H;a) + \rho^{ham}(F;b)
$$
more easily for the concatenated Hamiltonian. (See e.g., \cite{fooo:bulk} for
the proof.) Once we have the latter inequality, we can derive the former from the
latter again by the homotopy invariance property of $\rho^{ham}(\cdot;a)$
\emph{for the mean-normalized Hamiltonians}.

When one attempts to assign an invariant of Lagrangian submanifold $\phi_H^1(o_N)$
itself out of the spectral invariant $\rho^{lag}(H;a)$, one has to choose a
normalization of the Hamiltonian relative to the Lagrangian submanifold.
Since there is no canonical normalization unlike the Hamiltonian case, the
invariance property of Lagrangian spectral invariants and so
the triangle inequality is somewhat more nontrivial than the case of
Hamiltonian spectral invariants. In this subsection, we clarify these issues of
invariance property and of the triangle inequality.

The following parametrization independence follows immediately from the
construction of Lagrangian spectral invariants and $L^{(1,\infty)}$-continuity of
$H \mapsto \rho^{lag}(H;a)$.

\begin{lem}\label{lem:Hchi}
Let $H=H(t,x)$ be any, not necessarily nondegenerate,
smooth Hamiltonian and let $\chi:[0,1] \to [0,1]$ a reparameterization
function with $\chi(0) = 0$ and $\chi(1) = 1$. Then
$$
\rho^{lag}(H;a) = \rho^{lag}(H^\chi;a)
$$
where $H^\chi(t,x) = \chi'(t)H(\chi(t),x)$.
\end{lem}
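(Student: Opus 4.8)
The plan is to reduce the statement to two already-available facts: first, that for a \emph{nondegenerate} Hamiltonian a time-reparametrization induces a filtration-preserving chain-level isomorphism of the Floer complexes $CF(H;o_N,o_N)$ and $CF(H^\chi;o_N,o_N)$; and second, that $\rho^{lag}(\cdot;a)$ is $L^{(1,\infty)}$-continuous, so the nondegenerate case propagates to all $H$ by approximation. I would begin by recording the elementary but crucial bijection on the chord level: if $z:[0,1]\to T^*N$ solves $\dot z = X_H(t,z(t))$ with $z(0),z(1)\in o_N$, then $z^\chi := z\circ\chi$ solves $\dot{z^\chi}=X_{H^\chi}(t,z^\chi(t))$ with the same endpoints, since $\frac{d}{dt}z(\chi(t)) = \chi'(t)\dot z(\chi(t)) = \chi'(t)X_H(\chi(t),z(\chi(t))) = X_{H^\chi}(t,z^\chi(t))$; the conditions $\chi(0)=0$, $\chi(1)=1$ guarantee the boundary conditions and the invertibility of $z\mapsto z^\chi$. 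This gives a bijection $\CC hord(H;o_N,o_N)\cong\CC hord(H^\chi;o_N,o_N)$.

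The key computation is then that this bijection \emph{preserves the action}, i.e. $\CA^{cl}_{H^\chi}(z^\chi) = \CA^{cl}_H(z)$. For the $\theta$-term this is just the change of variables $u=\chi(t)$ in $\int (z^\chi)^*\theta = \int_0^1 \theta_{z^\chi(t)}(\dot{z^\chi}(t))\,dt = \int_0^1 \chi'(t)\,\theta_{z(\chi(t))}(\dot z(\chi(t)))\,dt = \int_0^1 \theta_{z(u)}(\dot z(u))\,du$; for the Hamiltonian term one has $\int_0^1 H^\chi(t,z^\chi(t))\,dt = \int_0^1 \chi'(t)H(\chi(t),z(\chi(t)))\,dt = \int_0^1 H(u,z(u))\,du$ by the same substitution. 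Hence the two action spectra coincide and, in the nondegenerate case, a continuation/reparametrization argument (deforming the modified Floer data and using that the reparametrized moduli problem is again regular, exactly as in the comparison of the dynamical versions in \cite{oh:jdg}) shows that the bijection upgrades to a filtration-preserving isomorphism of Floer complexes intertwining the PSS-type maps $\Phi$, so that $\inf_{\alpha\in\Phi(a^\flat)}\lambda_H(\alpha) = \inf_{\alpha\in\Phi(a^\flat)}\lambda_{H^\chi}(\alpha)$, i.e. $\rho^{lag}(H;a)=\rho^{lag}(H^\chi;a)$.

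Finally I would remove the nondegeneracy assumption: given an arbitrary smooth $H$, pick nondegenerate $H_i\to H$ in $L^{(1,\infty)}$; then $H_i^\chi\to H^\chi$ in $L^{(1,\infty)}$ as well (the reparametrization $H\mapsto H^\chi$ is continuous in this norm for fixed $\chi$, since $\|H^\chi\|=\int_0^1\osc(\chi'(t)H_{\chi(t)})\,dt = \int_0^1\osc(H_u)\,du = \|H\|$ and likewise for differences), and the $L^{(1,\infty)}$-continuity of $\rho^{lag}(\cdot;a)$ passes the equality $\rho^{lag}(H_i;a)=\rho^{lag}(H_i^\chi;a)$ to the limit. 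The only mildly delicate point is the regularity claim for the reparametrized moduli spaces in the nondegenerate step; but this is standard and in any case can be circumvented entirely by observing that the spectral number only depends on the filtered quasi-isomorphism type, which is what the action-preserving chord bijection already controls — so in fact the cleanest route is to bypass chain-level transversality and argue directly that $H$ and $H^\chi$ generate the same filtered Floer homology with the same distinguished classes. I expect that streamlining this last point, rather than any genuine difficulty, to be where the write-up needs care.
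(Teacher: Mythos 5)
Your first two steps --- the chord bijection $z\mapsto z\circ\chi$ and the verification that it preserves the classical action --- are exactly right, and they establish the key input $\Spec(H^\chi;N)=\Spec(H;N)$. What does not go through is the passage from there to equality of spectral invariants in the nondegenerate case. The reparametrization does \emph{not} carry Floer trajectories to Floer trajectories for \emph{any} choice of $t$-dependent almost complex structure: setting $u^\chi(\tau,t)=u(\tau,\chi(t))$ one gets
$$
\frac{\partial u^\chi}{\partial\tau}+J'\Bigl(\frac{\partial u^\chi}{\partial t}-X_{H^\chi}(u^\chi)\Bigr)
=\frac{\partial u}{\partial\tau}\big(\tau,\chi(t)\big)+\chi'(t)\,J'\Bigl(\frac{\partial u}{\partial t}-X_{H}(u)\Bigr)\big(\tau,\chi(t)\big),
$$
in which only the second term acquires the factor $\chi'(t)$; if $u$ solves \eqref{eq:CRHJ} with some $J$, requiring this to vanish forces $J'(\tau,t)\,\chi'(t)=J(\tau,\chi(t))$, hence $\chi'\equiv 1$ since both sides square to $-\mathrm{id}$. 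This is genuinely different from the change-of-frame map of Lemma \ref{lem:equiv}, which acts by a symplectomorphism of $T^*N$ and therefore does transport one regular moduli problem to another. Consequently ``the reparametrized moduli problem'' is not a Floer moduli problem, a continuation map only preserves the filtration up to a shift of order $\|H-H^\chi\|$ (not zero), and the fallback assertion that the chord bijection ``already controls the filtered quasi-isomorphism type'' is unfounded --- the filtered chain homotopy type also depends on the differential, about which the chord bijection says nothing.

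The argument the paper has in mind (hence the sentence ``follows immediately from the construction of Lagrangian spectral invariants and $L^{(1,\infty)}$-continuity'') is a spectrality-plus-continuity argument that uses your action computation as input but bypasses the chain-level comparison entirely, and it works in one step for arbitrary $H$. Interpolate $\chi_s(t)=(1-s)t+s\chi(t)$ for $s\in[0,1]$. Your chord bijection applied to each $\chi_s$ shows $\Spec(H^{\chi_s};N)=\Spec(H;N)$ for all $s$, and this is a compact measure-zero subset of $\R$. The family $s\mapsto H^{\chi_s}$ is continuous in $L^{(1,\infty)}$ (elementary, since $\chi_s\to\chi_{s'}$ in $C^1$), so by the $L^{(1,\infty)}$-continuity of $\rho^{lag}(\cdot\,;a)$ the map $s\mapsto\rho^{lag}(H^{\chi_s};a)$ is continuous. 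By Proposition \ref{prop:tight} (spectrality, with no nondegeneracy hypothesis) it takes values in the fixed set $\Spec(H;N)$; a continuous map from $[0,1]$ into a measure-zero subset of $\R$ has connected, hence one-point, image, so $\rho^{lag}(H;a)=\rho^{lag}(H^{\chi_0};a)=\rho^{lag}(H^{\chi_1};a)=\rho^{lag}(H^\chi;a)$. With this route the nondegenerate/degenerate dichotomy and the final limit step in your write-up are unnecessary, and the norm identity you compute there is what makes the interpolation above run.
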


We first recall the following triangle inequality which was essentially
proved in \cite{oh:cag}. (See Theorem 6.4 and Lemma 6.5 \cite{oh:cag}.
In \cite{oh:cag}, the cohomological
version of the Floer complex was considered and hence the opposite
inequality is stated. Other than this, the same proof can be applied here.)

\begin{prop}\label{prop:oh-triangle} Let $H, \, F \in \CP C^\infty_{asc}(T^*N;\R)$,
and assume $F$ is autonomous. Then we have
\be\label{eq:oh-triangle}
\rho^{lag}(H \# F;ab) \leq \rho^{lag}(H;a) + \rho^{lag}(F;b).
\ee
\end{prop}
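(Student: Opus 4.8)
The plan is to derive \eqref{eq:oh-triangle} from the pair-of-pants (triangle) product in Lagrangian Floer homology together with the sub-additivity of the classical action $\CA^{cl}$ under gluing of pseudoholomorphic triangles, following essentially the argument of Theorem 6.4 and Lemma 6.5 of \cite{oh:cag}. The one structural difference from that reference is bookkeeping: there the cohomological Floer complex is used with a $\min$-type level function, whereas here the chain complex $CF(H;o_N,o_N)$ carries the $\max$-type filtration $\lambda_H(\alpha) = \max_{z\in\supp\alpha}\CA^{cl}_H(z)$ of \eqref{eq:lambdaH-lag}, so every inequality in \cite{oh:cag} is to be reversed. I would first assume $(H,J)$ and $(F,J)$ are Floer-regular relative to $(o_N,o_N)$, so that $\phi_H^1(o_N)$ and $\phi_F^1(o_N)$ meet $o_N$ transversely, and reduce the general case to this one at the end by approximating $H$ and $F$ in $L^{(1,\infty)}$-norm and invoking the $L^{(1,\infty)}$-continuity of $H\mapsto\rho^{lag}(H;a)$ together with the continuity of $(H,F)\mapsto H\# F$. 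Since $H,F\in\CP C^\infty_{asc}(T^*N;\R)$ have compactly supported Hamiltonian vector fields and $T^*N$ is convex at infinity, the maximum principle confines all Floer strips and all holomorphic triangles occurring below to a fixed $D^R(T^*N)$, so Gromov--Floer compactness is available.

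Next I would set up the moduli space $\CM_3(z_1,z_2;z_3)$ of maps from a disc with three boundary punctures, two of which are incoming ends asymptotic to chords $z_1\in\CC hord(H;o_N,o_N)$ and $z_2\in\CC hord(F;o_N,o_N)$, the third an outgoing end asymptotic to a chord $z_3\in\CC hord(H\# F;o_N,o_N)$, with boundary values on $o_N$ along all three boundary arcs and with domain-dependent perturbation data interpolating the three Hamiltonians and almost complex structures. Counting rigid elements defines a map
$$
m_2:\ CF(H;o_N,o_N)\otimes CF(F;o_N,o_N)\longrightarrow CF(H\# F;o_N,o_N),
$$
which is a chain map and which, on homology, agrees under the canonical isomorphisms $\Phi:H_*(N,\Z)\to HF_*(\cdot;o_N,o_N)$ with the intersection product dual to the cup product on $H^*(N,\Z)$; in particular $m_2(\Phi(a^\flat)\otimes\Phi(b^\flat)) = \Phi((ab)^\flat)$. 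This is the triangle product of \cite{oh:cag,seidel:triangle,fooo:book}; the hypothesis that $F$ is autonomous is used only to put the data in the form treated in \cite{oh:cag}, where one of the two inputs is taken time-independent.

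The heart of the proof is the action estimate: for every $u\in\CM_3(z_1,z_2;z_3)$ one has
$$
\CA^{cl}_{H\# F}(z_3)\ \le\ \CA^{cl}_H(z_1)+\CA^{cl}_F(z_2)-E(u)\ \le\ \CA^{cl}_H(z_1)+\CA^{cl}_F(z_2),
$$
where $E(u)\ge 0$ is the geometric energy. One proves the first inequality by expressing $E(u)$ as $\int u^*\omega_0$ plus the Hamiltonian perturbation terms supported on the three strip-like ends, applying Stokes' theorem with $\omega_0=-d\theta$ over the triangular domain (the boundary contributions of $\theta$ along $o_N$ vanish because $\theta$ restricts to zero on the zero section), and then using the defining relation of $H\# F$ to cancel the end contributions against $\CA^{cl}_H(z_1)$, $\CA^{cl}_F(z_2)$ and $\CA^{cl}_{H\# F}(z_3)$. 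Feeding this into $m_2$ yields, for arbitrary chains $\alpha,\beta$,
$$
\lambda_{H\# F}\bigl(m_2(\alpha\otimes\beta)\bigr)\ \le\ \lambda_H(\alpha)+\lambda_F(\beta),
$$
since a chord $z_3$ can occur in $m_2(\alpha\otimes\beta)$ only if it is joined by a rigid triangle to some $z_1\in\supp\alpha$ and $z_2\in\supp\beta$. Finally, given $\e>0$, pick cycles $\alpha\in\Phi(a^\flat)$ and $\beta\in\Phi(b^\flat)$ with $\lambda_H(\alpha)\le\rho^{lag}(H;a)+\e$ and $\lambda_F(\beta)\le\rho^{lag}(F;b)+\e$; then $m_2(\alpha\otimes\beta)$ is a cycle representing $\Phi((ab)^\flat)$, so by \eqref{eq:rhoHa} and the filtration estimate,
$$
\rho^{lag}(H\# F;ab)\ \le\ \lambda_{H\# F}\bigl(m_2(\alpha\otimes\beta)\bigr)\ \le\ \rho^{lag}(H;a)+\rho^{lag}(F;b)+2\e,
$$
and letting $\e\to 0$ gives \eqref{eq:oh-triangle}. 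The main obstacle I anticipate is not the homological algebra but the analytic calibration in the action estimate: choosing the perturbation data on the three ends so that the perturbed energy identity produces exactly $\CA^{cl}_H(z_1)+\CA^{cl}_F(z_2)-\CA^{cl}_{H\# F}(z_3)$ with no sign-indefinite remainder, while keeping the triangles $C^0$-confined in the noncompact $T^*N$. Since both points are settled in \cite{oh:cag}, the remaining task is to verify that transcribing that argument into the present homological convention changes nothing beyond reversing the inequalities.
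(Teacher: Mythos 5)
Your proposal is correct and takes the same route the paper does: the paper gives no proof of its own, but simply cites Theorem~6.4 and Lemma~6.5 of \cite{oh:cag}, remarking that the cohomological (min-type) convention there must be flipped to the homological (max-type) convention used here. Your write-up is a faithful reconstruction of exactly that argument — triangle product $m_2$, Stokes/energy estimate giving $\CA^{cl}_{H\#F}(z_3)\le\CA^{cl}_H(z_1)+\CA^{cl}_F(z_2)$, level estimate, and the mini-max limit — so there is no substantive divergence from the paper.
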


Monzner, Vichery, and Zapolsky \cite{MVZ} proved the
following form of the triangle inequality which uses the concatenated Hamiltonian $H * F$
instead of the product Hamiltonian $H \# F$.

\begin{prop}[Proposition 2.4 \cite{MVZ}]\label{prop:Lag-triangle}
Let $H, \, K$ be compactly supported.
Suppose $H(1,x) \equiv F(0,x)$ and $H * F$ be the concatenated Hamiltonian.
Then
\be\label{eq:Lag-triangle}
\rho^{lag}(H * F;ab) \leq \rho^{lag}(H;a) + \rho^{lag}(F;b)
\ee
for all $a, \, b \in H^*(N)$.
\end{prop}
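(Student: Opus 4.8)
Since this is Proposition~2.4 of \cite{MVZ}, I only sketch the scheme I would follow; the point worth stressing is that the concatenation $H*F$ --- in contrast to the composition $H\#F$ of Proposition~\ref{prop:oh-triangle} --- lets one bypass the homotopy-invariance/normalization step entirely and handle a general, non-autonomous $F$. The plan is to realize the inequality as an energy estimate for the \emph{triangle (pair-of-pants) product} in the Lagrangian Floer homology of $(o_N,o_N)$, followed by a chain-level mini-max. First I would normalize the geometry of $H*F$: by the reparametrization invariance of Lemma~\ref{lem:Hchi}, we may assume, without changing any of the three spectral invariants, that $H*F$ is a time-rescaled copy of $H$ on $[0,\tfrac12]$ and of $F$ on $[\tfrac12,1]$. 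Next I would recall the product
\[
\ast:\ HF(H;o_N,o_N)\otimes HF(F;o_N,o_N)\ \longrightarrow\ HF(H*F;o_N,o_N),
\]
obtained by counting finite-energy solutions of the Floer equation on a disk with three boundary punctures, with boundary on $o_N$ and a domain-dependent Hamiltonian perturbation equal to $H_t\,dt$ on the first incoming end, to $F_t\,dt$ on the second, and to $(H*F)_t\,dt$ on the outgoing end; the closedness constraint on the perturbation one-form is precisely what forces the outgoing Hamiltonian to be (a reparametrization of) $H*F$. Under the canonical isomorphism $HF(o_N,o_N)\cong H^*(N)$ this product represents the cup product, so it sends $\Phi(a^\flat)\otimes\Phi(b^\flat)$ to $\Phi((ab)^\flat)$ (see \cite{seidel:triangle,fooo:book}).

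The heart of the matter, and where I expect the real work to lie, is the energy/action estimate: one must choose the perturbation data on the pair of pants so that the curvature term entering the energy identity contributes with the favorable sign, while still restricting to the prescribed $H$, $F$, $H*F$ on the three ends. The concatenation structure is exactly what makes this achievable \emph{without} an error term: near the outgoing end the perturbation literally \emph{is} $(H*F)\,dt$, the ``sum along the seam'' of the two incoming contributions, so the interior interpolation can be carried out keeping the curvature sign under control. (Alternatively, one may first pass to the geometric version via Lemma~\ref{lem:equiv} and work with the effective action functional of Proposition~\ref{prop:effective}, for which the Cauchy--Riemann equation is an honest negative gradient flow and the estimate becomes the transparent one for gradient trajectories on a surface.) Either way, the energy identity applied to a rigid solution $u$ from Hamiltonian chords $z_1$ of $H$ and $z_2$ of $F$ to a chord $z_3$ of $H*F$ gives
\[
0\ \le\ E(u)\ \le\ \CA^{cl}_H(z_1)+\CA^{cl}_F(z_2)-\CA^{cl}_{H*F}(z_3),
\]
hence $\CA^{cl}_{H*F}(z_3)\le \CA^{cl}_H(z_1)+\CA^{cl}_F(z_2)$. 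Consequently the chain map $\ast:CF(H)\otimes CF(F)\to CF(H*F)$ representing the product satisfies $\lambda_{H*F}(\alpha\ast\beta)\le\lambda_H(\alpha)+\lambda_F(\beta)$ for all chains $\alpha,\beta$.

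To finish, given $\e>0$ I would choose cycles $\alpha\in\Phi(a^\flat)$ and $\beta\in\Phi(b^\flat)$ with $\lambda_H(\alpha)\le\rho^{lag}(H;a)+\e$ and $\lambda_F(\beta)\le\rho^{lag}(F;b)+\e$; then $\alpha\ast\beta$ is a cycle in $CF(H*F;o_N,o_N)$ whose class is $\Phi((ab)^\flat)$, so by definition of $\rho^{lag}(H*F;ab)$ and the filtration estimate above,
\[
\rho^{lag}(H*F;ab)\ \le\ \lambda_{H*F}(\alpha\ast\beta)\ \le\ \rho^{lag}(H;a)+\rho^{lag}(F;b)+2\e,
\]
and letting $\e\to 0$ yields the inequality. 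The degenerate (non-transverse) case would follow by applying the transverse case to generic $C^2$-small perturbations of $H$ and $F$ and passing to the limit via the $L^{(1,\infty)}$-continuity of $H\mapsto\rho^{lag}(H;a)$ together with Proposition~\ref{prop:tight}. The transversality and gluing for the triangle moduli spaces and the identification of $\ast$ with the cup product are standard and I would simply quote them; the one genuinely delicate point, as indicated, is engineering the perturbation data on the pair of pants that realizes the \emph{exact} action inequality, which is where passing to the concatenation rather than the composition pays off.
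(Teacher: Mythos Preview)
The paper does not give its own proof of this proposition: it is quoted verbatim as Proposition~2.4 of \cite{MVZ} and used as a black box. So there is nothing in the paper to compare your sketch against.

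That said, your outline is the standard one and matches what one finds in \cite{MVZ} (and, in the Hamiltonian setting, in \cite{schwarz,oh:alan,fooo:bulk}): build the triangle product on a three-punctured disk with boundary on $o_N$ and Hamiltonian perturbation data restricting to $H$, $F$, $H*F$ on the three ends; use the energy identity to get the filtration inequality $\lambda_{H*F}(\alpha*\beta)\le\lambda_H(\alpha)+\lambda_F(\beta)$; identify the product with cup product under the PSS-type isomorphism; and conclude by the mini-max definition of $\rho^{lag}$. Your emphasis on why the concatenation $H*F$ (as opposed to $H\#F$) yields the inequality with no curvature error is exactly the point, and your fallback via Lemma~\ref{lem:equiv} and the effective functional of Proposition~\ref{prop:effective} is also a legitimate route. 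The only quibble is cosmetic: you invoke Lemma~\ref{lem:Hchi} at the outset, but in the concatenation setup the reparametrization is already built into the definition of $H*F$, so that step is not really needed.
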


In particular, this proposition applies to all pairs $H, \, F$ which are
compactly supported and boundary flat.

\begin{rem} We suspect that \eqref{eq:oh-triangle} holds even for the non-autonomous $F$
as in the Hamiltonian case but we did not check this, since it is not needed in the present paper.
\end{rem}

\subsection{Assigning spectral invariants to Lagrangian submanifolds}
\label{subsec:assign}

In this subsection, we identify a class, denoted by $\CP C^\infty_{(B;e)}$, of Hamiltonians
$H$ among those satisfying $\phi_H^1(o_N) = \phi_F^1(o_N)$, such that the equality
$$
\rho^{lag}(H;a) = \rho^{lag}(F;a)
$$
holds for all $H, \, F \in \CP C^\infty_{ass;B}$. As the notation suggests,
the class depends on the subset $B \subset N$.

We start with the following proposition. The proof closely follows that of Lemma 2.6
\cite{MVZ} which uses Proposition \ref{prop:Lag-triangle} in a significant way. We
need to modify their proof to obtain a somewhat stronger statement,
which replaces the condition ``$\phi_H^1= \phi_F^1$'' used in \cite{MVZ} by the conditions
put in this proposition.

\begin{prop}[Compare with Lemma 2.6 \cite{MVZ}] \label{prop:rhoH=rhoK}
Let $H, \, F \in \CP C^\infty_{asc}(T^*N;\R)$ be boundary-flat. Suppose in addition $H, \, F$ satisfy the following:
\begin{enumerate}
\item $\phi_H^1(o_N) = \phi_F^1(o_N)$,
\item $H \equiv c(t)$, $F \equiv d(t)$ on a tubular neighborhood
$T \supset B$ in $T^*N$ of a closed ball $B \subset o_N$ where $c(t), \, d(t)$
are independent of $x \in T$, and
\item they satisfy
$$
\int_0^1 c(t) \, dt = \int_0^1 d(t)\, dt.
$$
\end{enumerate}
Then $\rho^{lag}(H;a) = \rho^{lag}(F;a)$ holds for all $a \in H^*(N,\Z)$ without ambiguity of constant.
\end{prop}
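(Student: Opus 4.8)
The plan is to produce an auxiliary Hamiltonian $G$ with $\phi_G^1(o_N)=o_N$ whose spectral numbers can be computed \emph{exactly} (with no floating additive constant), and then play it against the triangle inequality. Set
$$
G \;:=\; H * \overline{F},
$$
the reparametrized (hence smooth) concatenation of the path $\phi_H$ followed by the reversed path $\phi_{\overline F}$; boundary-flatness of $H,F$ makes $G$ a boundary-flat, asymptotically constant Hamiltonian with $\phi_G^1 = (\phi_F^1)^{-1}\circ\phi_H^1$. By hypothesis (1),
$$
\phi_G^1(o_N) = (\phi_F^1)^{-1}\bigl(\phi_H^1(o_N)\bigr) = (\phi_F^1)^{-1}\bigl(\phi_F^1(o_N)\bigr) = o_N ,
$$
so $L_G = o_N$.

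The heart of the matter is the claim that $\rho^{lag}(G;a)=0$ for every $0\neq a\in H^*(N;\Z)$, \emph{with no additive ambiguity}. Since $L_G = o_N$, the basic generating function $h_G$ of $L_G$ satisfies $dh_G = i_G^*\theta = \theta|_{o_N} = 0$ on the connected manifold $o_N$, hence $h_G$ is a constant $c_0$ on $o_N = L_G$, and therefore $\Spec(G;N) = \{c_0\}$ is a single point. By spectrality (Proposition \ref{prop:tight}), $\rho^{lag}(G;a)\in\Spec(G;N)=\{c_0\}$, i.e.\ $\rho^{lag}(G;a)=c_0$ for all $a$. To evaluate $c_0$, pick any point $q_0$ of the ball $B\subset o_N$. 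On the tube $T\supset B$ we have $H\equiv c(t)$, $F\equiv d(t)$ with $c,d$ independent of the fibre variable, hence $X_H\equiv X_F\equiv 0$ on $T$; thus $\phi_H^t$, $\phi_F^t$ and $\phi_G^t$ all restrict to the identity on $T$ for every $t$, and on $T$ the reversed Hamiltonian is $\overline F(t,\cdot)=-d(t)$. Consequently the chord $z^G_{q_0}(t)=\phi_G^t\bigl((\phi_G^1)^{-1}(q_0)\bigr)\equiv q_0$ is constant, so that
$$
c_0 = h_G(q_0) = \CA^{cl}_G\bigl(z^G_{q_0}\bigr) = -\int_0^1 G(t,q_0)\,dt = -\Bigl(\int_0^1 c(t)\,dt - \int_0^1 d(t)\,dt\Bigr) = 0
$$
by hypothesis (3); here I used that the integral over $[0,1]$ of a reparametrized concatenation is the sum of the integrals of its two pieces, a fact insensitive to the choice of reparametrization (cf.\ Lemma \ref{lem:Hchi}). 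This proves $\rho^{lag}(G;a)=0$.

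It remains to feed this into the concatenation triangle inequality (Proposition \ref{prop:Lag-triangle}; if one wishes to use it only for compactly supported Hamiltonians, first subtract the asymptotic constants $c_\infty^{H},c_\infty^{\overline F}$, whose contributions to $\rho^{lag}$ on the two sides of the inequality match and cancel). Applied to the pair $H,\overline F$ it gives, for all $a,b$ with $\langle a\cup b,[N]\rangle\neq 0$,
$$
0 = \rho^{lag}(G;a\cup b) = \rho^{lag}(H * \overline F; a\cup b) \leq \rho^{lag}(H;a) + \rho^{lag}(\overline F;b),
$$
hence $\rho^{lag}(H;a)\geq -\rho^{lag}(\overline F;b)$ for all such $b$. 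Taking the supremum over $b$ and invoking the Poincar\'e--Floer duality under path reversal, $\rho^{lag}(F;a) = -\inf\{\rho^{lag}(\overline F;b) : \langle a\cup b,[N]\rangle\neq 0\}$ — an exact identity with no constant ambiguity, being the analog for $\overline F$ of the duality established in \cite{oh:cag} — one obtains $\rho^{lag}(H;a)\geq\rho^{lag}(F;a)$. The symmetric argument with $H$ and $F$ interchanged, using $\overline G = F * \overline H$ (for which $\rho^{lag}(\overline G;\cdot)\equiv 0$ by the identical computation, since $\int_0^1\overline G(t,q_0)\,dt = -\int_0^1 G(t,q_0)\,dt = 0$), yields $\rho^{lag}(F;a)\geq\rho^{lag}(H;a)$, and therefore $\rho^{lag}(H;a)=\rho^{lag}(F;a)$ for all $a$. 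The genuinely delicate point — indeed the reason this subsection exists — is that every step must be exact rather than up-to-a-constant: this is why $\rho^{lag}(G;a)=0$ is extracted from spectrality together with the vanishing of $\theta$ on $o_N$ rather than from a softer homotopy argument, why hypothesis (3) is needed (it is precisely what forces $c_0=0$), and why the unambiguous form of the duality must be used; the passage of the triangle inequality to the boundary-flat asymptotically constant setting, by contrast, is the only routine ingredient not literally quoted above.
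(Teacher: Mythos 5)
Your construction, the core computation, and the conclusion are all sound, and the overall strategy coincides with the paper's: build an auxiliary $G$ with $\phi_G^1(o_N)=o_N$, show $\rho^{lag}(G;\cdot)\equiv 0$ exactly, and feed this into the concatenation triangle inequality. A few comparative remarks.

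Your choice $G=H*\overline F$ (so $\phi_G^1=(\phi_F^1)^{-1}\phi_H^1$) is the right one and arguably cleaner than the paper's $G=\widetilde F*H$: with the paper's convention $\phi_{A*B}^1=\phi_B^1\phi_A^1$, the latter yields $\phi_G^1=\phi_H^1(\phi_F^1)^{-1}$, which does not visibly fix $o_N$ from hypothesis (1) -- that looks like an inadvertent transposition in the paper. Your derivation of the constancy of $\Spec(G;N)$ from $dh_G=i_G^*\theta=\theta|_{o_N}=0$ is a slicker packaging of the paper's first-variation argument; the evaluation $c_0=0$ via a constant chord in $B$ is the same in both.

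The one place where you genuinely deviate -- and where a reader should pause -- is the endgame. The paper applies the triangle inequality just once with $b=1$, getting $\rho^{lag}(H;a)\leq\rho^{lag}(F;a)+\rho^{lag}(G;1)=\rho^{lag}(F;a)$; the implicit input is that concatenating the backtracking loop ($\overline F*F$ in your notation) onto $\phi_H$ does not change $\rho^{lag}$, which follows from Lemma \ref{lem:Hchi} and $L^{(1,\infty)}$-continuity. You instead run the triangle inequality over all $b$ and then invoke the exact duality $\rho^{lag}(F;a)=-\inf\{\rho^{lag}(\overline F;b):\langle a\cup b,[N]\rangle\neq 0\}$. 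That statement is plausible, but it is not established in the paper: equation \eqref{eq:duality-H} only proves $\rho^{lag}(\widetilde F;1)=-\rho^{lag}(F;[pt]^\#)$, i.e., the special case $a=1$ and with $\widetilde F$ rather than $\overline F$ (and passing between $\widetilde F$ and $\overline F$ is itself a normalization question, which is precisely what the whole subsection is trying to control). You flag this as an ``analog'' but do not justify it, so as written there is a gap. You can close it without any duality at all: apply Proposition \ref{prop:Lag-triangle} to $G*F$ with the pair $(1,a)$, note $\phi_{G*F}^1=\phi_H^1$ and that $G*F$ is $H$ followed by the backtrack $\overline F*F$, so $\rho^{lag}(G*F;a)=\rho^{lag}(H;a)$ by reparametrization/continuity, giving $\rho^{lag}(H;a)\leq \rho^{lag}(G;1)+\rho^{lag}(F;a)=\rho^{lag}(F;a)$, and symmetrize. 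That is exactly the paper's route and keeps every step elementary.
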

\begin{proof}

We consider the Hamiltonian path $\phi_G: t\mapsto \phi_G^t$ with
$G = \widetilde F * H$ with $\widetilde F(t,x) = - F(1-t,x)$.
This defines a loop of Lagrangian submanifold
$$
t \mapsto \phi_G^t(o_N), \quad \phi_G^1(o_N) = o_N
$$
and satisfies $\phi_G^t|_{B} \equiv id$ and
$$
G(t,q) = \begin{cases} - c(1-2t) \quad & 0 \leq t \leq 1/2\\
d(2t-1) \quad & 1/2 \leq t \leq 1
\end{cases}
$$
for all $q \in B \subset T$ by definition $G = \widetilde F * H$.

We claim $\rho^{lag}(G;a) = 0$ for all $0 \neq a \in H^*(N)$. This will be an immediate
consequence of the following lemma and the spectrality of numbers $\rho^{lag}(G;a)$.
\begin{lem}
The value $\CA^{cl}_G(z)$ does not depend on
the Hamiltonian chord $z \in \CC hord(G;o_N,o_N)$. In particular,
$\CA^{cl}_G(z) = 0$.
\end{lem}
\begin{proof} Recall that any Hamiltonian chord in $\CC hord(G;o_N,o_N)$
has the form
$$
z(t) = z_G^q(t)
$$
for some $q \in o_N$. Here we use the hypothesis $\phi_G^1(o_N) = o_N$. Consider any smooth
path $\alpha:[0,1] \to o_N$ with $\alpha(0) = q, \, \alpha(1) = q'$. Then
$$
\CA^{cl}_G\left(z_G^{q'}\right) - \CA^{cl}_G\left(z_G^q\right)
= \int_0^1 \frac{d}{du} \CA^{cl}_G\left(z_G^{\alpha(u)}\right) \, du.
$$
But a straightforward computation using the first variation formula \eqref{eq:1stvariation} implies
$$
\frac{d}{du} \CA^{cl}_G\left(z_G^{\alpha(u)}\right)
= \left\langle \theta, \frac{\del}{\del u}(\phi_G(\alpha(u)))\right\rangle
-\left\langle \theta, \frac{\del}{\del u}(\alpha(u))\right\rangle = 0 - 0 = 0
$$
since $\phi_G(\alpha(u)), \, \alpha(u) \in o_N$.

For the second statement, we have only to consider the constant path $z\equiv c_q \in B$
for which
\beastar
\CA^{cl}_G(c_q) & = & -\int_0^1 G(t,q)\, dt = \int_0^{1/2} c(1-2t)\, dt - \int_{1/2}^1 d(2t-1)\, dt\\
& = & \int_0^1 c(t)\,dt - \int_0^1 d(t)\,dt =0.
\eeastar
This proves the lemma.
\end{proof}

Once we have the lemma, we can apply the triangle inequality \eqref{eq:Lag-triangle}
$$
\rho^{lag}(H;a) \leq \rho^{lag}(F;a) + \rho^{lag}(G;1) = \rho^{lag}(F;a)
$$
for any given $a \in H^*(N)$.
By changing the role of $H$ and $F$ in the proof of the above lemma, we also obtain
$\rho^{lag}(\widetilde G;1) = 0$ and then obtain $\rho^{lag}(F;a) \leq \rho^{lag}(H;a)$ by triangle
inequality. This finishes the proof of the proposition.
\end{proof}

This proposition motivates us to introduce the following definitions

\begin{defn}\label{defn:PCBe} For each given $B \subset N$, we define
$$
\mathfrak{Iso}_B(o_N;T^*N) = \{ L \in \mathfrak{Iso}(o_N;T^*N) \mid
o_N \cap L \supset o_B\}.
$$
When a function $c:[0,1] \to \R$  is given in addition, we define
\beastar
\CP C^\infty_{(B;e)} & = & \{ H \in \CP C^\infty_{asc} \mid H_t \equiv c(t) \,
\mbox{on a neighborhood of $o_B$ in $T^*N$}\\
&{}& \hskip1in \mbox{and } \, \int _0^1 c(t)\, dt=e\}.
\eeastar
\end{defn}

With these definitions, the proposition enables us to unambiguously define
the following spectral invariant attached to $L$.

\begin{defn}\label{defn:rhoBe}
Suppose $L \in \mathfrak{Iso}_B(o_N;T^*N)$ and let $e \in \R$ be given.
For each given such $e$, we define a spectral invariant of
$L \in \mathfrak{Iso}_{(B;e)}(o_N;T^*N)$ by
$$
\rho^{(B;e)}(L;a): = \rho^{lag}(H;a), \quad L = \phi_H^1(o_N)
$$
for a (and so any) $H \in \CP C^\infty_{(B;e)}$.
\end{defn}

With this definition, we have the following obvious lemma

\begin{lem} Let $H \in \CP C^\infty_{(B;e)}$, then $\widetilde H, \, \overline H \in
\CP C^\infty_{(B;-e)}$.
\end{lem}

Then we prove the following duality statement of $\rho^{(B;e)}$.

\begin{prop}\label{prop:duality}
Let $H \in \CP C^\infty_{(B;e)}$ and $L = \phi_H^1(o_N)$.
We denote $\widetilde L = \phi_{\widetilde H}^1(o_N) = \phi_{\overline H}^1(o_N)$.
Then
\be\label{eq:duality}
\rho^{(B;-e)}(\widetilde L;1) = -\rho^{(B;e)}(L;[pt]^\#).
\ee
\end{prop}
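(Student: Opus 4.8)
The plan is to reduce the duality statement \eqref{eq:duality} to the already-established duality behavior of the Lagrangian spectral invariants under the reflection map $\mathfrak r: T^*N \to T^*N$, $\mathfrak r(q,p) = (q,-p)$, as developed in \cite{oh:cag}, combined with the normalization-independence just proved in Proposition \ref{prop:rhoH=rhoK}. First I would fix a representative $H \in \CP C^\infty_{(B;e)}$ with $H_t \equiv c(t)$ near $o_B$, so that $\widetilde H \in \CP C^\infty_{(B;-e)}$ represents $\widetilde L$; by Proposition \ref{prop:rhoH=rhoK} and Definition \ref{defn:rhoBe} the quantities $\rho^{(B;e)}(L;a)$ and $\rho^{(B;-e)}(\widetilde L;a)$ do not depend on the chosen representatives, so it suffices to verify the identity for these particular $H$ and $\widetilde H$. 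The key computation is then the relation, at the level of Hamiltonian chords and their classical actions, between the Floer complex $CF(H; o_N, o_N)$ and the complex $CF(H^{\mathfrak r}; o_N, o_N)$ where $H^{\mathfrak r}(t,x) = -H(t,\mathfrak r(x))$: since $\mathfrak r^*\theta = -\theta$ and $\mathfrak r$ is anti-symplectic fixing $o_N$ setwise, one gets $\CA^{cl}_{H^{\mathfrak r}}(\mathfrak r \circ z) = -\CA^{cl}_H(z)$ for a chord $z \in \CC hord(H; o_N, o_N)$, together with a bijection on chords. This is the precise Lagrangian analog of the duality $\rho^{ham}(\overline H;a) = -\rho^{ham}(H;a^\#)$; as the excerpt itself notes, the argument parallels the treatment in \cite{oh:cag} of the duality map $F \mapsto \widetilde F(t,x) = -F(1-t,x)$.

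The second step is to identify $H^{\mathfrak r}$ (or more precisely a reparametrization of it) with a Hamiltonian generating $\widetilde L$. Here I would use that $\phi_{H^{\mathfrak r}}^t = \mathfrak r \circ \phi_H^t \circ \mathfrak r$, so $\phi_{H^{\mathfrak r}}^1(o_N) = \mathfrak r(\phi_H^1(o_N)) = \mathfrak r(L)$; on the other hand $\widetilde L = \phi_{\overline H}^1(o_N)$ is the image of $L$ under the time-reversed flow, and one checks that $\mathfrak r(L)$ and $\widetilde L$ agree — or, if a sign discrepancy appears, that the relevant spectral invariant picks up exactly the class-reversal $1 \leftrightarrow [pt]^\#$ under Poincaré duality on $N$, which is the mechanism producing the asymmetry between the two sides of \eqref{eq:duality}. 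The normalization bookkeeping is handled by the observation that $H^{\mathfrak r}_t \equiv -c(t)$ near $o_B$ and $\int_0^1 (-c(t))\,dt = -e$, so $H^{\mathfrak r} \in \CP C^\infty_{(B;-e)}$, matching the subscript on the left-hand side of \eqref{eq:duality} and ensuring no constant ambiguity enters. The duality exchange of cohomology classes under $\mathfrak r$ — that the fundamental class $1 \in H^0(N)$ is swapped with the point class $[pt]^\# \in H^n(N)$ via Poincaré duality on $N$ — is what converts $\rho^{(B;e)}(L;[pt]^\#)$ on the right into $\rho^{(B;-e)}(\widetilde L;1)$ on the left.

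I expect the main obstacle to be the careful matching of the three interlocking normalizations: the constant $e$ labeling the class $\CP C^\infty_{(B;e)}$, the Poincaré-duality convention relating $a \in H^*(N)$ to $[a]^\flat \in H_*(N)$ and thence to the Floer generator, and the precise form of the chord bijection under $\mathfrak r$ (including whether one needs an auxiliary reparametrization $\chi$, handled by Lemma \ref{lem:Hchi}, to make $H^{\mathfrak r}$ boundary-flat and literally of the form $\phi_{\overline H}$). Each piece is routine in isolation, but getting all the signs to line up so that exactly one class-flip and exactly one sign-flip survive — yielding \eqref{eq:duality} rather than, say, $\rho^{(B;-e)}(\widetilde L;1) = -\rho^{(B;e)}(L;1)$ — requires a disciplined comparison with the conventions fixed in the ``Notations and Conventions'' section and in \cite{oh:cag}. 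Once the chord-level action identity $\CA^{cl}_{H^{\mathfrak r}}(\mathfrak r\circ z) = -\CA^{cl}_H(z)$ and the compatibility of $\mathfrak r_*$ with the Floer differentials are in hand, the spectrality property (Proposition \ref{prop:tight}) and the mini-max definition \eqref{eq:rhoHa} finish the proof by a formal argument.
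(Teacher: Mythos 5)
Your proposal hinges on using the fiber reflection $\mathfrak r:(q,p)\mapsto(q,-p)$ and the Hamiltonian $H^{\mathfrak r}(t,x)=-H(t,\mathfrak r(x))$, but this is the wrong duality for this statement, and the step where you wave it through is a genuine gap. You correctly compute $\phi_{H^{\mathfrak r}}^1(o_N)=\mathfrak r(\phi_H^1(o_N))=\mathfrak r(L)$, but then claim "one checks that $\mathfrak r(L)$ and $\widetilde L$ agree — or, if a sign discrepancy appears, that the spectral invariant picks up exactly the class reversal." Neither branch is available: $\widetilde L=\phi_{\overline H}^1(o_N)=(\phi_H^1)^{-1}(o_N)$, and $\mathfrak r(L)\neq(\phi_H^1)^{-1}(o_N)$ in general. (Take $\phi_H^1=\psi_2\circ\psi_1$ on $T^*S^1$ with $\psi_1$ the fiberwise shift by $d(\sin 2\pi q)$ and $\psi_2$ a small horizontal shear; then $(\phi_H^1)^{-1}(o_N)$ is an exact graph while $\mathfrak r(\phi_H^1(o_N))$ is a sheared, non-graphical curve.) Since $\rho^{(B;-e)}(\widetilde L;1)$ is defined via a generator of $\widetilde L$, computing $\rho^{lag}(H^{\mathfrak r};1)$ produces a spectral invariant of the unrelated Lagrangian $\mathfrak r(L)$. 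The discrepancy between the two is not a "sign" or a Poincaré-duality class flip: it is a different Lagrangian submanifold, so no bookkeeping of cohomology classes can convert one invariant into the other.

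The correct mechanism, which is what the paper uses, is the \emph{time-reversal} duality, not the fiber reflection. One works directly with $\widetilde H(t,x)=-H(1-t,x)$, the involution $\iota:\Omega(o_N,o_N)\to\Omega(o_N,o_N)$, $\iota(\gamma)(t)=\gamma(1-t)$, the action identity $\CA^{cl}_{\widetilde H}(\widetilde\gamma)=-\CA^{cl}_H(\gamma)$, and the transformation $\widetilde u(\tau,t)=u(-\tau,1-t)$ of Floer solutions, which gives a filtration-reversing chain isomorphism between $CF(\widetilde H;o_N,o_N)$ and $CF(H;o_N,o_N)$ exchanging the roles of $1$ and $[pt]^\#$ under Poincaré duality; this yields $\rho^{lag}(\widetilde H;1)=-\rho^{lag}(H;[pt]^\#)$ directly, with $\widetilde H\in\CP C^\infty_{(B;-e)}$ supplying the representative needed for $\rho^{(B;-e)}(\widetilde L;1)$. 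The reflection $\mathfrak r$ is indeed in the paper and gives a parallel-looking identity, but for $\mathfrak r(L)$ and for the basic phase function $f_H$ (Proposition \ref{prop:ftildeH=-fH}); conflating that statement with the one you are asked to prove is exactly where your argument breaks. Your normalization bookkeeping and the appeal to Proposition \ref{prop:rhoH=rhoK}/Definition \ref{defn:rhoBe} are fine and do match the paper; it is the choice of duality that needs to be replaced.
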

\begin{proof} By the above lemma, $\widetilde H \in \CP C^\infty_{(B;-e)}$ and so
$\rho^{(B;-e)}(\widetilde L;1)$ is given by
$$
\rho^{(B;-e)}(\widetilde L;1) = \rho^{lag}(\widetilde H;1)
$$
by definition. But it was proven in \cite{viterbo:generating,oh:jdg,oh:cag} that
\be\label{eq:duality-H}
\rho^{lag}(\widetilde H;1) = - \rho^{lag}(H;[pt]^\#)
\ee
which follows from the Poincar\'e duality argument, by studying the
time-reversal flow of the Floer equation \eqref{eq:CRHJ} $\widetilde u$
defined by $\widetilde u(\tau,t) = u(-\tau, 1-t)$. The map $\widetilde u$ satisfies
the equation
$$
\begin{cases}
\frac{\del \widetilde u}{\del \tau} + \widetilde J\left(\frac{\del \widetilde u}{\del t}
- X_{\widetilde H}(\widetilde u)\right) = 0 \\
\widetilde u(\tau,0), \, \widetilde u(\tau,1) \in o_N.
\end{cases}
$$
Furthermore this equation is compatible with the involution of the path space
$$
\iota: \Omega(o_N,o_N) \to \Omega(o_N,o_N)
$$
defined by $\iota(\gamma)(t) = \widetilde \gamma(t)$ with $\widetilde \gamma(t) = \gamma(1-t)$
and the action functional identity
$$
\CA^{cl}_{\widetilde H}(\widetilde \gamma) = - \CA^{cl}_H(\gamma).
$$
We refer to \cite{oh:cag} for the details of the duality argument in the Floer
theory used in the derivation of \eqref{eq:duality-H}.

On the other hand, by definition,
$$
\rho^{lag}(H;[pt]^\#) = \rho^{(B;e)}(L;[pt]^\#)
$$
since $H \in \CP C^\infty_{(B;e)}$. This finishes the proof.
\end{proof}

\section{Comparison theorem of $f_H$ and $\rho^{lag}(H;1)$}
\label{sec:cap}

We first remark that both $\rho^{lag}(H;1)$ and $f_H$ remain unchanged
under the change of $H$ outside a neighborhood of $\bigcup_{t \in [0,1]} \phi_H^t(o_N)$.

The main theorem we prove in this section is the following
which is closely related to Proposition 5.1 \cite{viterbo:generating}.

\begin{thm}\label{thm:rhoversusfH}
For any Hamiltonian $H \in \CP C^\infty_{asc}$,
$$
\rho^{lag}(H;[pt]^\#) \leq \min f_H, \quad \max f_H \leq \rho^{lag}(H;1).
$$
\end{thm}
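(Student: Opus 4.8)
The plan is to prove the second inequality $\max f_H \le \rho^{lag}(H;1)$ first, and then to deduce the first inequality from it by means of the anti-symplectic reflection $\mathfrak r$, in the spirit of the duality argument of \cite{oh:cag}.

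For the second inequality I would start from the identity $f_H(q)=\rho^{lag}(H;\{q\})$ of Theorem \ref{thm:ohcag}, which exhibits $f_H(q)$ as the mini-max value, in the Floer complex $CF(H;o_N,T_q^*N)$, of the generator $[q]^\flat\in H_0(\{q\};\Z)$. The key input is the triangle product for the triple of Lagrangians $(o_N,o_N,T_q^*N)$: it induces, at the chain level and compatibly with the action filtrations, the module pairing
$$
HF(o_N,o_N)\otimes HF(o_N,T_q^*N)\longrightarrow HF(o_N,T_q^*N),
$$
which sends $(1,[q]^\flat)$ to $[q]^\flat$ because the unit $1\in HF(o_N,o_N)\cong H^*(N)$ acts as the identity. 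First I would reparametrize $H$ to be boundary-flat, which changes neither $\phi_H^1$, nor $f_H$, nor $\rho^{lag}(H;\cdot)$ by Lemma \ref{lem:Hchi}; then, after subtracting off the constant $c_\infty$ at infinity to reduce to compactly supported Hamiltonians and running the normalization bookkeeping of Section \ref{subsec:assign}, I would apply the triangle inequality for Lagrangian spectral invariants (Proposition \ref{prop:Lag-triangle}, in its module form over $HF(o_N,T_q^*N)$ for the triple above) to the concatenation $H*K$ with $K\equiv 0$ in the second slot. Here $o_N$ meets $T_q^*N$ transversally in the single point $q$, so the only chord of $K\equiv 0$ is the constant one at $q$ and $\rho^{lag}(K;\{q\})=0$; moreover $H*K$ is a reparametrization of $H$, so $\rho^{lag}(H*K;[q]^\flat)=\rho^{lag}(H;\{q\})=f_H(q)$ by Lemma \ref{lem:Hchi}. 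The triangle inequality then gives
$$
f_H(q)=\rho^{lag}(H*K;1\cdot[q]^\flat)\le \rho^{lag}(H;1)+\rho^{lag}(K;\{q\})=\rho^{lag}(H;1),
$$
and taking the supremum over $q\in N$ (with $N$ compact) yields $\max f_H\le\rho^{lag}(H;1)$.

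For the first inequality I would apply the second one to $H^{\mathfrak r}(t,x):=-H(t,\mathfrak r(x))$ with $\mathfrak r(q,p)=(q,-p)$, obtaining $\max f_{H^{\mathfrak r}}\le\rho^{lag}(H^{\mathfrak r};1)$. Since $\mathfrak r$ is anti-symplectic, fixes $o_N$, preserves every fibre $T_q^*N$, and satisfies $\mathfrak r^*\theta=-\theta$, the assignment $\gamma\mapsto\mathfrak r\circ\gamma$ is a bijection $\CC hord(H;o_N,T_q^*N)\leftrightarrow\CC hord(H^{\mathfrak r};o_N,T_q^*N)$ with $\CA^{cl}_{H^{\mathfrak r}}(\mathfrak r\circ\gamma)=-\CA^{cl}_H(\gamma)$, and it intertwines the two Floer equations up to the reversal $\tau\mapsto-\tau$; the Poincar\'e-duality argument of \cite{oh:cag} underlying \eqref{eq:duality-H} then gives $f_{H^{\mathfrak r}}(q)=-f_H(q)$ for all $q$ and $\rho^{lag}(H^{\mathfrak r};1)=-\rho^{lag}(H;[pt]^\#)$, the reflection interchanging the unit and the point class under Poincar\'e duality. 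Substituting, $-\min f_H=\max f_{H^{\mathfrak r}}\le\rho^{lag}(H^{\mathfrak r};1)=-\rho^{lag}(H;[pt]^\#)$, which is the first inequality.

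I expect the main obstacle to be the normalization bookkeeping in the proof of the second inequality: one must verify that, in reducing from $\CP C^\infty_{asc}$ to compactly supported Hamiltonians and in invoking the concatenation triangle inequality, all additive constants cancel, so that the resulting estimate relates exactly $f_H(q)$ to $\rho^{lag}(H;1)$ — this is precisely the role of Section \ref{subsec:assign} and the classes $\CP C^\infty_{(B;e)}$. A secondary technical point is to check that the triangle product really carries $(1,[q]^\flat)$ to $[q]^\flat$ at the level of filtered chain complexes, not merely in homology. Once the second inequality is established, the duality identities behind the first are a routine adaptation of the corresponding facts in \cite{oh:cag}.
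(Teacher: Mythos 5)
Your proposal is correct and follows essentially the same route as the paper: prove $\max f_H\le\rho^{lag}(H;1)$ via a filtered triangle product involving the Lagrangian triple (effectively) $L_H$, $o_N$, $T_q^*N$ and the identity $f_H(q)=\rho^{lag}(H;\{q\})$, then derive $\rho^{lag}(H;[pt]^\#)\le\min f_H$ from the anti-symplectic reflection $\mathfrak r$ together with $f_{H^{\mathfrak r}}=-f_H$ and $\rho^{lag}(H^{\mathfrak r};1)=-\rho^{lag}(H;[pt]^\#)$, which is exactly Proposition \ref{prop:ftildeH=-fH}. The only cosmetic difference is that you phrase the triangle product in the dynamical picture (a module version of the concatenation inequality Proposition \ref{prop:Lag-triangle} applied to $H*K$ with $K\equiv 0$), whereas the paper constructs the product $\mathfrak m_2:CF(L_H,o_N)\otimes CF(o_N,T_q^*N)\to CF(L_H,T_q^*N)$ explicitly in the geometric version with $J$-holomorphic triangles and verifies the filtration statement directly via the energy identity \eqref{eq:area} for the effective action functional — the two formulations are equivalent by the filtration-preserving correspondence of Lemma \ref{lem:equiv}.
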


For the purpose of studying comparison result given in the next section, we start with this section by
adding the following additional symmetry property of $f_H$ and $\rho^{lag}$
under the reflection $\frak r: T^*N \to T^*N$ defined by $\frak r(q,p) = (q,-p)$.
Such a reflection argument was used by Viterbo \cite{viterbo:generating} in the
proof of similar identities in the context of generating function method.

\subsection{Anti-symplectic reflection and basic phase function}
\label{subsec:reflection}

\begin{prop}\label{prop:ftildeH=-fH} Consider the
canonical reflection map $\frak r: T^*N \to T^*N$ given by $\frak r(q,p) = (q,-p)$
and define the Hamiltonian $H^\frak r$ to be $H^\frak r(t,x) = - H(t, \frak r(x))$ for $x = (q,p)$.
Then
$$
f_{H^\frak r} = - f_H, \quad \rho^{lag}(H^\frak r;1) = - \rho^{lag}(H;[pt]^\#)
$$
\end{prop}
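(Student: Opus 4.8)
The plan is to deduce both identities from the transformation behaviour of the Hamiltonian dynamics, the Liouville form and the action functional under $\frak r$, and then to invoke the same Poincar\'e--duality mechanism already used for \eqref{eq:duality-H}, with the anti-symplectic reflection $\frak r$ in place of the time-reversal $H\mapsto\widetilde H$.

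\emph{Step 1: elementary transformation laws under $\frak r$.} Since $\frak r(q,p)=(q,-p)$ one has $\frak r^*\theta=-\theta$ and hence $\frak r^*\omega_0=-\omega_0$, so $\frak r$ is an anti-symplectic involution fixing $o_N$, each fibre $T_q^*N$, and each disc bundle $D^R(T^*N)$. Writing $X_H$ in canonical coordinates one checks directly that $X_{H^{\frak r}}=\frak r_* X_H$, whence $\phi_{H^{\frak r}}^t=\frak r\circ\phi_H^t\circ\frak r$; in particular $H^{\frak r}\in\CP C^\infty_{asc}$ whenever $H$ is, with the same $R$, and $L_{H^{\frak r}}=\phi_{H^{\frak r}}^1(o_N)=\frak r(L_H)$. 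Consequently $z\mapsto\frak r\circ z$ is a bijection from the Hamiltonian chords of $H$ (with boundary on $o_N$, respectively on $o_N$ and $T_q^*N$) onto those of $H^{\frak r}$, and from $\frak r^*\theta=-\theta$ together with $H^{\frak r}(t,\frak r(x))=-H(t,x)$ one obtains
$$
\CA^{cl}_{H^{\frak r}}(\frak r\circ\gamma)=-\CA^{cl}_H(\gamma)
$$
for every path $\gamma$ in $T^*N$. In particular $h_{H^{\frak r}}\circ\frak r=-h_H$ on $L_H$ and $\Spec(H^{\frak r};N)=-\Spec(H;N)$.

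\emph{Step 2: $f_{H^{\frak r}}=-f_H$.} Since $\frak r$ covers $\mathrm{id}_N$ and carries $L_H$ onto $L_{H^{\frak r}}$, the Lipschitz function $-f_H$ is a graph selector for $L_{H^{\frak r}}$: at a regular point $q$ its differential, viewed as a point of $T^*N$, equals $\frak r\big(df_H(q)\big)\in\frak r(L_H)=L_{H^{\frak r}}$. By the uniqueness-up-to-a-constant clause of the graph selector theorem quoted above, $f_{H^{\frak r}}$ and $-f_H$ therefore differ by an additive constant. To see this constant vanishes I would run the reflection argument directly on the Floer complexes $CF(H;o_N,T_q^*N)$: conjugating a regular pair $(H,J)$ by $\frak r$ and reversing the $\R$-coordinate, that is, sending a solution $u(\tau,t)$ of \eqref{eq:CRHJqN} to $\frak r\big(u(-\tau,t)\big)$, yields a solution for the regular pair $(H^{\frak r},J^{\frak r})$ with $J^{\frak r}_t:=-d\frak r\circ J_t\circ d\frak r^{-1}$ (again $\omega_0$-compatible, precisely because $\frak r$ is anti-symplectic); this interchanges the two ends of the cylinder and reverses the action filtration by the identity of Step 1. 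This is exactly the Poincar\'e--duality mechanism behind \eqref{eq:duality-H}; since $H_0(\{q\};\Z)$ is one-dimensional there is no dual-class bookkeeping, and one gets $\rho^{lag}(H^{\frak r};\{q\})=-\rho^{lag}(H;\{q\})$, i.e. $f_{H^{\frak r}}(q)=-f_H(q)$ for every $q$.

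\emph{Step 3: $\rho^{lag}(H^{\frak r};1)=-\rho^{lag}(H;[pt]^\#)$.} I would apply the identical $\frak r$-conjugation-plus-$\R$-reversal to the Floer data for the pair $(o_N,o_N)$ defining $\rho^{lag}(\cdot;a)$ in \eqref{eq:rhoHa}. The resulting map is a filtration-reversing isomorphism from $CF(H;o_N,o_N)$ onto the dual complex of $CF(H^{\frak r};o_N,o_N)$, and under the comparison isomorphism $H_*(N)\to HF_*(H;o_N,o_N)$ it intertwines Poincar\'e duality on $N$ with the algebraic dual, so that the class $1\in H^0(N)$, whose Poincar\'e dual is $[N]$, is matched with $[pt]^\#\in H^n(N)$, whose Poincar\'e dual is $[pt]$. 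Together with the action reversal of Step 1 this gives $\rho^{lag}(H^{\frak r};1)=-\rho^{lag}(H;[pt]^\#)$, exactly parallel to \eqref{eq:duality-H} and \cite{viterbo:generating,oh:cag}. The only point that requires genuine care, in both Steps 2 and 3, is the verification that $\frak r$-conjugation-plus-$\R$-reversal is a grading- and orientation-coherent bijection of the relevant moduli spaces that really realises the dual complex; but this is formally identical to the time-reversal argument already carried out for \eqref{eq:duality-H}, the genuinely new input being only the sign computations $\frak r^*\theta=-\theta$, $X_{H^{\frak r}}=\frak r_* X_H$ and $\CA^{cl}_{H^{\frak r}}\circ\frak r=-\CA^{cl}_H$ of Step 1.
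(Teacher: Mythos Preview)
Your proof is correct and follows essentially the same route as the paper: both arguments rest on the identities $\frak r^*\theta=-\theta$, $\CA^{cl}_{H^{\frak r}}(\frak r\circ\gamma)=-\CA^{cl}_H(\gamma)$, and the involution $u(\tau,t)\mapsto\frak r(u(-\tau,t))$ on Floer moduli spaces, reducing the second identity to the same Poincar\'e-duality mechanism as \eqref{eq:duality-H}. The only cosmetic difference is that the paper restricts from the outset to almost complex structures satisfying $\frak r^*J=-J$ (so that the reflected equation uses the \emph{same} $J$), whereas you allow an arbitrary $J$ and pass to $J^{\frak r}=-d\frak r\circ J\circ d\frak r^{-1}$; your preliminary graph-selector-uniqueness remark in Step~2 is a correct but unnecessary detour, since the Floer argument already pins down the constant.
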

\begin{proof} We observe that the map satisfies $\frak r^*\theta = - \theta$ and
in particular is anti-symplectic. It also preserves the zero section and each individual
fibers of $T^*N$ and so induces the corresponding reflection map on the path space
$$
\frak r: \Omega(L, T_q^*N) \to \Omega(L^\frak r, T_q^*N); z = (q,p) \mapsto \frak r(z) = (q, -p)
$$
for each given base point $q \in N$,
where $L^\frak r := \frak r(L) = \phi_{H^\frak r}^1(o_N)$. A straightforward
computation also shows
\be\label{eq:AHrz}
\CA_{H^\frak r}(\frak r(z)) = - \CA_H(z).
\ee
We then consider $J$' satisfying $\frak r^*J = - J$.
For example, the standard Sasakian almost complex structure $J_g$
associated any Riemmanian metric $g$ on $N$ \cite{floer:witten} is such an almost complex
struture. Therefore the set of such
$J$'s is non-empty. It is also not difficult show that the set is
a contractible infinite dimensional manifold. (See Lemma 4.1 \cite{fooo:antisymp} for its proof.)

Then a straightforward computation shows that
this reflection map induces one-one correspondence
$$
u \mapsto u'; \quad u'(\tau,t): =  \frak r(u(-\tau,t))
$$
between the set of solutions of the Floer equation \eqref{eq:CRHJqN} and those associated to
$$
\begin{cases}
\frac{\del u'}{\del \tau} + J \left(\frac{\del u'}{\del t}
- X_{H^{\frak r}}(u')\right) = 0 \\
u'(\tau,0) \in o_N, \, u'(\tau,1) \in T_q^*N.
\end{cases}
$$
Furthermore all the generic transversality statements are equivalent for $u$ and $u'$
for $J$'s satisfying $\frak r^*J = - J$ via the transformation of the Hamiltonian $H \mapsto H^\frak r$.
Therefore $\frak r$ induces canonical isomorphism
$$
\frak r_*: HF_*(H;o_N,T_q^*N) \to HF_*(H^\frak r;o_N, T_q^*N).
$$
We also recall the canonical isomorphism established for arbitrary generic $H$ in \cite{oh:jdg}
$$
HF_*(H;o_N,T_q^*N) \cong H_*(\{pt\}) \cong \Z
$$
which has rank 1. Therefore $(\frak r)_*([pt]_H) = \pm [pt]_{H^\frak r}$.
The first equality then follows from these observations and \eqref{eq:AHrz}
by the general construction of spectral invariants
$\rho^{lag}(H;\{q\})$ given in section \ref{sec:basic},
especially (the Lagrangian version of) Conformality Axiom \cite{oh:alan}.

A similar consideration based on \eqref{eq:AHrz} with the boundary condition
$$
u'(\tau,0) \in o_N, \, u'(\tau,1) \in o_N
$$
gives rise to the second identity by the same kind of duality argument as done to prove \eqref{eq:duality-H}
in \cite{oh:cag}. We omit the details by referring readers thereto for the details. This finishes the proof.
\end{proof}

\subsection{Analysis of Example 9.4 \cite{oh:jdg}}
\label{subsec:example}

Before giving the proof of Theorem \ref{thm:rhoversusfH}, we illustrate the inequalities by
a concrete example, which is a continuation of Example 9.4 \cite{oh:jdg}.

\begin{exm}
Consider the Lagrangian submanifold $L$ in $T^*S^1$ pictured as in Figure 1
whose coordinates we denote by $(q,p)$.
One can check that the wave front projection of $L$, i.e., the graph of the multi-valued function
$h_H$ of the associated Hamiltonian $H$ such that $L = \phi_H^1(o_{S^1})$
can be drawn as in Figure 2 in $S^1 \times \R$ whose coordinates we denote by $(q,a)$.

Here we denote by $z_i = (q_i,0)$ below for $i = 0, \cdots, 3$ the intersections of $L$ with the zero
section, and by $x_i$ $i=1,\, 2$ the caustics and by $y$ the point at which the two
regions between the graph and the dotted line have the same area
in Figure 1. Note that the points $z_i$'s are the critical points of the
multi-valued generating function $h_H$
(or correspond to critical points of the action functional),
$x_i$'s to the cusp points of the wave front and $y$ is the
crossing point of two different branches of the wave front projection.

Using the continuity of the basic phase function $f_H$ where $L = \phi_H^1(o_N)$, one can easily see
that the graph of $f_H$ is the one bold-lined in Figure 2.  We would like to
note that the value $\min_{q\in N} f_H(q)$ is {\it not} a critical value of
$\CA_H^{cl}$, and the branch of the wave front containing the point $(q_1,a_1)$
associated to the critical point $z_1$ of $h_H$ is eliminated
from the graph of the basic phase function $f_H$.

\begin{figure}[tbp]
\centering
\includegraphics{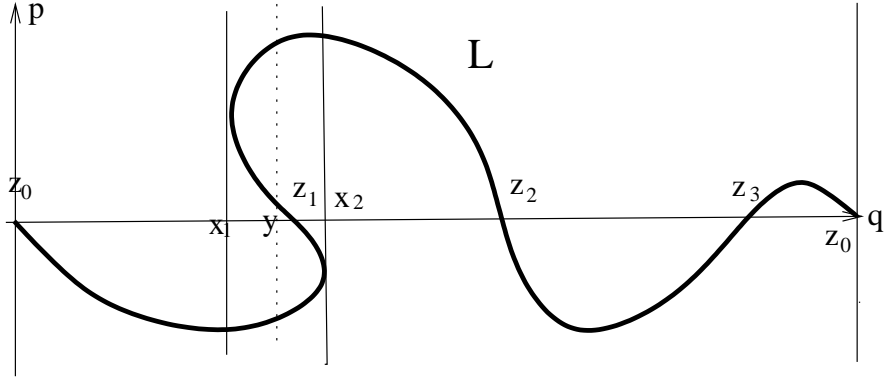}
\caption{Multi-section Lagrangian $L$}
\end{figure}

We note that the Floer complex $CF(L,o_{S^1}) \cong \oplus_{i=0}^3 \Z\{z_i\}$
and its boundary map is given by
$$
\del(z_0) = z_1 - z_3, \, \del(z_1) = 0 = \del(z_3), \, \del(z_2) = z_3 - z_1.
$$
(Here we take $\Z_2$-coefficients to avoid precise checking of the signs which is
irrelevant for the study of this example.)
From this we derive
$$
\ker \del = \Z \{z_1,z_3, z_0 + z_2\}, \quad \Im \del = \Z \{z_1 - z_3\}.
$$
Therefore the class $1$ is realized by the Floer cycle $z_0 + z_2$ (or any other class of the
form $z_0 + z_2 + \del(\alpha)$)
and the class $[pt]^\#$ is realized by the Floer cycle of the form $z_1$ (or any other class of
the form $z_1 + \del(\beta)$. A simple examination of Figure 1 and 2 and comparison of the action values
associated to the intersection points $z_1$ and $z_3$
shows that the infimum of the level $\lambda_H(z_1 + \del(\beta))$, which is nothing by $\rho(H;[pt]^\#)$ by definition,
is realized by the Floer cycle represented by the intersection point $z_1$. Therefore we obtain
$$
\rho(H;[pt]^\#) = \CA_{H}(z_1^H)
$$
which is denoted as $a_1$ in Figure 2, where $z_1^H$ is the Hamiltonian path given by $z_1^H(t) := \phi_H^t(\phi_H^1)^{-1}(z_1)$.

Combining the above discussion on $\rho^{lag}$ and comparing them with the values of $f_H$, we can easily obtain from Figure 2 that
$$
\rho(H;[pt]^\#) < \min f_H < \max f_H = \rho(H;1).
$$
It is interesting to observe two peculiar phenomena in this example:
\begin{enumerate}
\item the minimum of $f_H$ is realized at a non-smooth point $y \in N$
of the function $f_H$, and
\item the value $\rho(H;[pt]^\#)$ is realized by the
`local maximum' of the branch of $h_H$ containing the point $(q_1, a_1) \in S^1 \times \R$ where $q_1 = \pi(z_1)$
and $a_1 = \CA_{H}(z_1^H)$.
\end{enumerate}

\begin{figure}[tbp]
\centering
\includegraphics{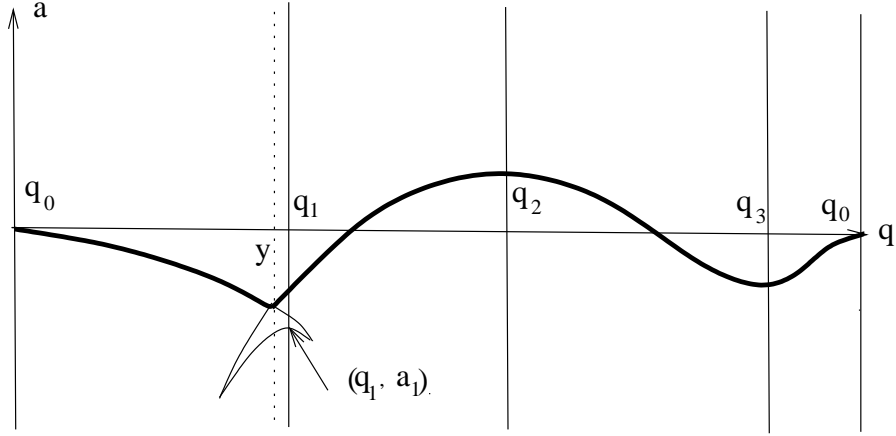}
\caption{Wave front of $L$ and the graph of $f_H$}
\end{figure}

\end{exm}

\subsection{Proof of comparison result on $\rho^{lag}(H;1)$ and $f_H$}
\label{subsec:proof}

We now go back to the proof of Theorem \ref{thm:rhoversusfH}.
We first remark that the second inequality in Theorem \ref{thm:rhoversusfH}
immediately follows by applying the first inequality to the Hamiltonian $H^\frak r$
and combining Proposition \ref{prop:ftildeH=-fH}.

Therefore it remains to prove the inequality $\max f_H \leq \rho(H;1)$.
which will occupy the rest of this section.

We first recall the definition of the triangle product described in \cite{oh:cag}, \cite{foh:ajm}
and put it into a more modern context in the general Lagrangian Floer theory
such as in \cite{fooo:book} and in other more recent literatures.

Let $q \in N$ be given. Consider the Hamiltonians $H: [0,1] \times T^*N \to \R$
such that $L_H$ intersects transversely both $o_N$ and $T_q^*N$.
We consider the Floer complexes
$$
CF(L_H,o_N), \quad CF(o_N,T_q^*N), \quad CF(L_H,T_q^*N)
$$
each of which carries filtration induced from the effective action function
given in Proposition \ref{prop:effective}. We denote by $\mathfrak v(\alpha)$ the level
of the chain $\alpha$ in any of these complexes.

More precisely, $CF(L_H,o_N)$ is filtered by the effective functional
$$
\CA^{(1)}(\gamma):= \int \gamma^*\theta + h_H(\gamma(0)),
$$
$CF^{\mu}(o_N,T_q^*N)$ by
$$
\CA^{(2)}(\gamma) := \int \gamma^*\theta,
$$
and $CF(L_H,T_q^*N)$ by
$$
\CA^{(0)}(\gamma):= \int \gamma^*\theta + h_H(\gamma(0))
$$
respectively. We recall the readers that $h_H$ is the potential of $L_H$
and the zero function the potentials of $o_N, \, T_q^*N$.

We now consider the triangle product in the chain level, which we denote by
\be\label{eq:m2chain}
\mathfrak m_2: CF(L_H,o_N) \otimes CF(o_N,T_q^*N) \to CF(L_H,T_q^*N)
\ee
following the general notation from \cite{fooo:book}, \cite{seidel:triangle}.
This product is defined by considering all triples
$$
x_1 \in L_H \cap o_N, \, x_2 \in o_N \cap T_q^*N, \, x_0 \in L_H \cap T_q^*N
$$
with the polygonal Maslov index $\mu(x_1,x_2;x_0)$ whose
associated analytical index, or the virtual dimension of the moduli space
$$
\CM_{3}(D^2; x_1,x_2; x_0): = \widetilde \CM_3(D^2;x_1,x_2;x_0)/PSL(2,\R)
$$
of $J$-holomorphic triangles, becomes zero and counting the number of elements thereof.
The precise formula of the index is irrelevant to our discussion which, however, can be
found in \cite{seidel:triangle}, \cite{fooo:anchor}.

\begin{defn}\label{defn:CM3} Let $J = J(z)$ be a domain-dependent family of
compatible almost complex structures with $z \in D^2$.
We define the space $\widetilde \CM_3(D^2;x_1,x_2;x_0)$
by the pairs $(w,(z_0,z_1,z_2))$ that satisfy the following:
\begin{enumerate}
\item $w: D^2 \to T^*N$ is a continuous map satisfying $\delbar_J w = 0$
$D^2 \setminus \{z_0,z_1,z_2\}$,
\item the marked points $\{z_0,z_1,z_2\} \subset \del D^2$ with counter-clockwise
cyclic order,
\item $w(z_1) = x_1, \, w(z_2)=x_2$ and $w(z_0) = x_0$,
\item the map $w$ satisfies the Lagrangian boundary condition
$$
w(\del_1 D^2) \subset L_H, \, w(\del_2 D^2) \subset o_N, \, w(\del_3 D^2) \subset T_q^*N
$$
where $\del_i D^2 \subset \del D^2$ is the are segment in between $x_i$ and $x_{i+1}$
($i \mod 3$).
\end{enumerate}
\end{defn}
The general construction is by now well-known and e.g., given in \cite{fooo:book}. In the
current context of exact Lagrangian submanifolds, the detailed construction is also given
in \cite{oh:cag} and \cite{seidel:triangle}. One important ingredient in relation to
the study of the effect on the level of Floer chains under the product is the following
(topological) energy identity where the choice of the \emph{effective}
action functional plays a crucial role. For readers' convenience, we give its
proof here.

\begin{prop} Suppose $w:D^2 \to T^*N$ be any smooth map with finite energy
that satisfy all the conditions given in \ref{defn:CM3}, but not necessarily $J$-holomorphic.
We denote by $c_x:[0,1] \to T^*N$ the constant path with its value $x \in T^*N$.
Then we have
\be\label{eq:area}
\int w^*\omega_0 = \CA^{(1)}(c_{x_1}) + \CA^{(2)}(c_{x_2}) - \CA^{(0)}(c_{x_0}).
\ee
\end{prop}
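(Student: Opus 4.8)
The plan is to prove \eqref{eq:area} by a direct application of Stokes' theorem, using only the three structural facts about the Liouville one-form $\theta$ that are available: the sign convention $\omega_0 = -d\theta$ from \eqref{eq:omega0}, the vanishing of $\theta$ on the zero section and on each cotangent fibre, and the potential identity $i_H^*\theta = dh_H$ on $L_H$ from \eqref{eq:i*theta}. The point of the computation is that these three properties, together with the boundary term $h_H(\gamma(0))$ built into the \emph{effective} action functionals, make the three vertex contributions of the triangle assemble into the asserted combination.

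First I would note that since $w \colon D^2 \to T^*N$ is smooth and $\theta$ is a smooth one-form, Stokes' theorem applies on the compact surface $D^2$ (the marked points lie on $\del D^2$, form a set of measure zero, and contribute nothing), so that
\[
\int_{D^2} w^*\omega_0 \;=\; -\int_{D^2} w^*(d\theta) \;=\; -\int_{D^2} d(w^*\theta) \;=\; -\int_{\del D^2} w^*\theta .
\]
Next I would decompose $\del D^2$ into the three boundary arcs prescribed in Definition \ref{defn:CM3}: an arc $A_L$ on which $w$ maps into $L_H$, an arc $A_0$ on which $w$ maps into $o_N$, and an arc $A_q$ on which $w$ maps into $T_q^*N$. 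Along $A_0$ the pullback $w^*\theta$ vanishes identically because $\theta$ restricts to $0$ on the zero section; along $A_q$ it vanishes as well, since in canonical coordinates $\theta = \sum_i p_i\, dq^i$ while $q$ is constant along $T_q^*N$, so $\theta|_{T_q^*N} = 0$. Hence $\int_{\del D^2} w^*\theta = \int_{A_L} w^*\theta$. On $A_L$ the map $w$ takes values in $L_H$, so by \eqref{eq:i*theta} we have $w^*\theta = d(h_H\circ w)$ along that arc; the fundamental theorem of calculus then gives $\int_{A_L} w^*\theta = \pm\bigl(h_H(x_1) - h_H(x_0)\bigr)$, the sign being determined by the orientation that $\del D^2$ inherits from $D^2$ together with the counter-clockwise cyclic order of the marked points (the arc $A_L$ joins the two vertices $x_1 \in L_H\cap o_N$ and $x_0 \in L_H\cap T_q^*N$ that lie on $L_H$).

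Finally, since $\int c_x^*\theta = 0$ for any constant path $c_x$, the definitions of the three effective functionals reduce on constant paths to $\CA^{(1)}(c_{x_1}) = h_H(x_1)$, $\CA^{(0)}(c_{x_0}) = h_H(x_0)$, and $\CA^{(2)}(c_{x_2}) = 0$. Substituting these into the outcome of the previous step produces exactly the identity \eqref{eq:area}. I expect the one genuinely delicate point to be the orientation bookkeeping in the last computation: one must verify that the convention $\omega_0 = -d\theta$, the induced boundary orientation of $D^2$, and the prescribed cyclic ordering $(z_0,z_1,z_2)$ together yield the combination $\CA^{(1)}(c_{x_1}) + \CA^{(2)}(c_{x_2}) - \CA^{(0)}(c_{x_0})$ rather than its negative. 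Everything else is an immediate consequence of the three vanishing/potential properties of $\theta$, and is precisely why the effective action functionals were defined with the boundary terms $h_H(\gamma(0))$.
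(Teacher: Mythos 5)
Your proof is correct and follows essentially the same route as the paper's: apply Stokes' theorem with $\omega_0 = -d\theta$, kill the $o_N$ and $T_q^*N$ boundary arcs using $\theta|_{o_N} = \theta|_{T_q^*N} = 0$, evaluate the $L_H$ arc via $i_H^*\theta = dh_H$, and match the result against the effective functionals on constant paths. You are in fact somewhat more careful than the paper about the orientation bookkeeping at the end, which is welcome given that the paper's own arc-indexing in Definition \ref{defn:CM3} and its proof display are slightly inconsistent; the underlying computation is the same.
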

\begin{proof} Recall $\omega_0 = -d\theta$ and $i^*\theta = dh_H$ on $L_H$ and $i^*\theta = 0$ on
$o_N$ and $T_q^*N$ where $i$'s are the associated inclusion maps of $L_H, \, o_N, \, T_q^*N \subset T^*N$
respectively. Therefore
\beastar
\int_{D^2} w^*\omega_0 & = & - \int_{\del D^2} w^*\theta
= - \int_{\del_1 D^2} w^*\theta - \int_{\del_2 D^2} w^*\theta - \int_{\del_3 D^2} w^*\theta \\
& = & - \int_{\del_1 D^2} w^*dh_H - 0 - 0 = h_H(w(z_1)) - h_H(w(z_2)) \\
& = & h_H(x_1) - h_H(x_0) = \CA^{(1)}(c_{x_1}) - \CA^{(0)}(c_{x_0}) \\
& = & \CA^{(1)}(c_{x_1}) + \CA^{(2)}(c_{x_2}) - \CA^{(0)}(c_{x_0}).
\eeastar
Here the last equality comes since $\CA^{(2)}(c_{x_2}) = \int c_{x_2}^*\theta = 0$.
This finishes the proof.
\end{proof}

An immediate corollary of this proposition from the definition of $\mathfrak m_2$ is that
the map \eqref{eq:m2chain} restricts to
$$
\mathfrak m_2: CF^{\lambda}(L_H,o_N) \otimes CF^{\mu}(o_N,T_q^*N) \to CF^{\lambda + \mu}(L_H,T_q^*N).
$$
It is straightforward to check that this map satisfies
$$
\del(\mathfrak m_{2}(x,y)) = \mathfrak m_2(\del(x),y) \pm \mathfrak m_2(x,\del(y)
$$
and in turn induces the product map
\be\label{eq:m2hom}
*_F: HF^{\lambda}(L_H,o_N) \otimes HF^{\mu}(o_N,T_q^*N) \to HF^{\lambda+\mu}(L_H,T_q^*N)
\ee
in homology. This is because if $w$ is $J$-holomorphic $\int w^*\omega \geq 0$.
(We refer to \cite{oh:cag} and \cite{foh:ajm} for the general construction of
product map $\mathfrak m_2$ and to \cite{oh:cag}, \cite{MVZ} for the study of filtration.
Similar study of filtration is also performed in \cite{schwarz}, \cite{oh:alan} in the
Hamiltonian Floer homology setting.)

With these preparations, we are ready to wrap-up the proof of Theorem \ref{thm:rhoversusfH}:

\begin{proof}[Proof of Theorem \ref{thm:rhoversusfH}]
We consider a Floer cycle $\alpha$ representing the fundamental class
$1^\flat = [M] \in HF(L_H,o_N)$ and $\beta=\{q\}$ representing the unique generator of
$HF(o_N,T_q^*N) \cong \Z$. Then by definition
$$
\mathfrak v(\alpha) \geq \rho^{lag}(H;1), \quad \mathfrak v(\beta) = \rho^{lag}(0;[q]) = 0.
$$
Then its product cycle $\mathfrak m_2(\alpha,\beta) \in CF(L_H,T_q^*N)$
represents the homology class $[q] \in CF(L_H,T_q^*N) \cong \Z$ and so
$ \mathfrak v(\mathfrak m_2(\alpha,\beta)) \geq \rho^{lag}(H;\{q\})= f_H(q)$ by definition of the latter.
Applying the triangle inequality, we obtain
$$
\mathfrak v(\alpha) + 0 = \mathfrak v(\alpha) + \mathfrak v(\beta) \geq \mathfrak v(\mathfrak m_2(\alpha,\beta))
\geq \rho^{lag}(H;\{q\}) = f_H(q).
$$
Therefore we have derived
$$
\mathfrak v(\alpha) \geq f_H(q)
$$
for all cycle $\alpha \in CF(L_H,o_N)$ representing $[M]$. By definition of $\rho^{lag}(H;1)$, this proves
$$
\rho^{lag}(H;1) \geq f_H(q).
$$
Since this holds for any point $q \in N$, we have proved $\rho^{lag}(H;1) \geq \max f_H$.
\end{proof}

\section{A Hamiltonian $C^0$ continuity of spectral Lagrangian capacity}
\label{sec:Hamiltonian}

We first recall the definition of the function
$\gamma^{lag}_B: \frak{Iso}_B(o_N;T^*N) \to \R$
defined by
$$
\gamma^{lag}_B(L) = \rho^{lag}_B(H;1) - \rho^{lag}_B(L_H;[pt]^\#)
$$
for $L = \phi_H^1(o_N)$ with $H \in \CP^\infty_{ass;B}$.

In this section, we prove the following Hamiltonian $C^0$-continuity result of the function
which is the Lagrangian analog to Theorem 1 \cite{seyfad}.

\begin{thm}\label{thm:rhoC0conti} The function $\gamma^{lag}_B: \frak{Iso}_B(o_N;T^*N) \to \R$
is continuous with respect to the Hamiltonian $C^0$-topology in the sense of Definition \ref{defn:hamC0dist}.
\end{thm}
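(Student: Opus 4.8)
The plan is to follow the strategy of \cite{seyfad}, using Theorem \ref{thm:rhoversusfH}, the normalization analysis of Section \ref{subsec:assign}, and the comparison of the dynamical and geometric Floer complexes in Section \ref{sec:comparison}. Since $\mathfrak{Iso}_B(o_N;T^*N)$ carries the direct limit topology of the metric spaces $\mathfrak{Iso}^K_T(o_N;D^R(T^*N))$, it suffices to prove continuity of $\gamma^{lag}_B$ on each of these separately, i.e.\ for a fixed triple $(R,K,T)$. First I would reduce to continuity at the base point $o_N$. If $L = \phi_F^1(o_N)$ and $L_i \to L$, then, unwinding Definition \ref{defn:hamC0dist} (whose conjugated form is designed precisely for this), one may choose Hamiltonians $G_i$ with $\phi_{G_i}^1(L) = L_i$, $\supp X_{G_i} \subset D^{R'}(T^*N)\setminus o_B$, $\|G_i\|$ bounded, $\phi_{G_i}^1 \equiv id$ near $o_B$, and $M_i := \phi_{G_i}^1(o_N) \to o_N$ in the Hamiltonian $C^0$-topology. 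Writing $L_i = \phi_{G_i * F}^1(o_N)$ (after a harmless reparametrization, Lemma \ref{lem:Hchi}, to match the concatenation), applying Proposition \ref{prop:Lag-triangle} with $a = b = 1$ and — via the duality $\rho^{lag}(\cdot;[pt]^\#) = -\rho^{lag}(\widetilde{(\cdot)};1)$ together with $\widetilde{G_i*F} = \widetilde F * \widetilde{G_i}$ — once more to bound $\rho^{lag}(G_i*F;[pt]^\#)$ from below, and using that $\gamma^{lag}_B$ is normalization-free, one obtains $|\gamma^{lag}_B(L_i) - \gamma^{lag}_B(L)| \le \max\{\gamma^{lag}_B(M_i), \gamma^{lag}_B(M_i')\}$, where $M_i' = \phi_{\overline{G_i}}^1(o_N) \to o_N$ as well. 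Thus everything reduces to showing $\gamma^{lag}_B(M_i) \to 0$ whenever $M_i = \phi_{G_i}^1(o_N)$ and $G_i$ satisfies the structural/convergence conditions (1)--(4).

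For this, I would first normalize $G_i$. By condition (2), $X_{G_i}$ vanishes on a fixed neighborhood of $o_B$, hence $G_i$ is constant in $x$ there, equal to some $c_i(t)$; replacing $G_i$ by $G_i - c_i(t)$ changes neither $\|G_i\|$, nor the support and time-one conditions, nor $\gamma^{lag}_B(M_i)$, but pins the potential $h_{G_i}$ — and hence the basic phase function $f_{G_i}$ — to vanish identically on a fixed open neighborhood $W \supset B$. Condition (4) forces $d_{\text{\rm H}}(M_i, o_N) =: \epsilon_i \to 0$, so by \eqref{eq:dfL} the function $f_{G_i}$ is Lipschitz with constant $\le \epsilon_i$, whence (vanishing on $W$) $\|f_{G_i}\|_{C^0} \le \epsilon_i \diam(N) \to 0$. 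By Theorem \ref{thm:rhoversusfH} we have $\max f_{G_i} \le \rho^{lag}(G_i;1)$ and $\rho^{lag}(G_i;[pt]^\#) \le \min f_{G_i}$, so $\gamma^{lag}_B(M_i) = \rho^{lag}(G_i;1) - \rho^{lag}(G_i;[pt]^\#) \to 0$ will follow once I bound $\rho^{lag}(G_i;1)$ from above by a quantity tending to $0$: indeed by Proposition \ref{prop:ftildeH=-fH} the Hamiltonian $G_i^{\frak r}$ satisfies the same conditions, $f_{G_i^{\frak r}} = -f_{G_i}$, and $\rho^{lag}(G_i;[pt]^\#) = -\rho^{lag}(G_i^{\frak r};1)$, so the missing lower bound on $\rho^{lag}(G_i;[pt]^\#)$ is exactly the upper bound on $\rho^{lag}(G_i^{\frak r};1)$.

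To prove $\rho^{lag}(G_i;1) \to 0$ I would argue as in \cite{seyfad}. Pass to the geometric version of the Floer complex of the pair $(M_i, o_N)$ equipped with the effective action functional (Proposition \ref{prop:effective}) — which, by the filtration-preserving identification of Section \ref{sec:comparison}, computes the $\rho^{lag}(G_i;\cdot)$; its generators are the constant paths at the points of $M_i \cap o_N$, with action equal to $h_{G_i}$ at that point. Since $M_i$ coincides with $o_N$ over $W$, the piece $o_W \subset M_i \cap o_N$ carries vanishing action, and after a perturbation of $M_i$ supported away from a neighborhood of $o_B$ (whose effect on all relevant action values is $O(\epsilon_i)$ by \eqref{eq:fH}) one makes $(M_i,o_N)$ Floer-regular while keeping the new intersection points over $N\setminus W$ inside $D^{2\epsilon_i}(T^*N)$. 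Using $|dh_{G_i}| \le 2\epsilon_i$ along $M_i$, the vanishing on $o_W$, and a uniform bound depending only on $(R,K,T)$ on the geometry of $M_i$, one then produces a Floer cycle representing $1 = [N] \in HF(M_i,o_N)$ of action level $\le \eta(\epsilon_i)$ with $\eta(0)=0$, so $\rho^{lag}(G_i;1) \le \eta(\epsilon_i)$. Combining everything, $\gamma^{lag}_B(M_i) \le \osc(f_{G_i}) + 2\eta(\epsilon_i) \to 0$, which finishes the proof.

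The main obstacle is the last step: the quantitative construction of a fundamental Floer cycle of small action level. This is precisely where the support hypotheses (2)--(3) are indispensable — without a fixed neighborhood of $B$ on which $\phi^1 \equiv id$ there is no ``anchor'' forcing $f_{G_i}$ to vanish on a nonempty open set, and indeed the unrestricted continuity of $\gamma^{lag}$ is open (Question \ref{ques:ham-continuity}). Two delicate points deserve care: resolving the non-transversality of $M_i$ with $o_N$ along $o_W$ by a perturbation invisible near $o_B$ whose effect on action is controlled via \eqref{eq:fH}; and controlling the action of the intersection points on the possibly complicated, multi-sheeted ``far'' part of $M_i$ over $N \setminus W$, which requires extracting from the data $(R,K,T)$ enough uniform geometric control on $M_i$ to propagate the smallness of $h_{G_i}$ from $o_W$ across all of $M_i \cap o_N$. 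I expect this portion to be modeled closely on the proof of Theorem 1 of \cite{seyfad}.
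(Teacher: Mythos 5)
Your reduction to continuity at the zero section and your identification of the direct-limit structure are correct and match the paper. But the heart of your proposal — the upper bound $\rho^{lag}(G_i;1) \le \eta(\epsilon_i)$ — is precisely what you concede you have not proved, and the route you sketch (hand-building a fundamental Floer cycle of small action on the geometric complex $(M_i,o_N)$, perturbing away non-transversality near $o_W$, and controlling the ``far'' sheets of $M_i$) is not what the paper does and would require considerably more work to make rigorous. Note also that Theorem \ref{thm:rhoversusfH} is orthogonal to the point: the inequalities $\max f_{G_i} \le \rho^{lag}(G_i;1)$ and $\rho^{lag}(G_i;[pt]^\#)\le\min f_{G_i}$ bound $\gamma^{lag}_B(M_i)$ from \emph{below} by $\osc f_{G_i}$, which is of no help — the $C^0$-smallness of $f_{G_i}$ that you derive from \eqref{eq:dfL} and the normalization is simply pointing in the wrong direction.

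The paper's actual mechanism avoids Floer-cycle constructions entirely and is the Lagrangian analogue of Seyfaddini's $\e$-shiftability (and Ostrover's displaceability trick). Choose a Morse function $f$ with $\Crit f \subset \operatorname{Int} B$ and $\Graph df|_B \subset T$, and set $C_{(f;B,T)}$ as in Definition \ref{defn:CfBT}. The key point (Lemma \ref{lem:(T,f)}) is that if $\osc_{C^0}(\phi_H^1;o_N) < C_{(f;B,T)}$ and $\phi_H^1 \equiv id$ on $T$ and $\supp\phi_H \cap o_B = \emptyset$, then $\phi_H^1(L_f)\cap o_N = L_f\cap o_N \subset o_B$ and all Hamiltonian chords of $H\#(f\circ\pi)$ are constant paths at critical points of $f$ in $B$. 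Since $H\equiv c_B(t)$ on $B$, one computes the difference of \emph{any} two action values for $H\#(f\circ\pi)$ to be $f(p_-)-f(p_+) \le \osc f$, giving $\rho^{lag}(H\#f\circ\pi;1)-\rho^{lag}(H\#f\circ\pi;[pt]^\#)\le \osc f$ by spectrality (Lemma \ref{lem:hLs=hLf}). The triangle inequality (Proposition \ref{prop:oh-triangle} or \ref{prop:Lag-triangle}) and the computation $\rho^{lag}(\pm f\circ\pi;1)=\mp\min(\pm f)$ convert this to $\gamma^{lag}_B(L)\le 2\,\osc f$ (Proposition \ref{prop:rhoLHoscf}). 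Finally, a conformal rescaling $f\mapsto\lambda f$ with $\lambda=\osc_{C^0}(\phi_H^1;o_N)/C_{(f;B,T)}$ (valid after a harmless preliminary shrink, Lemma \ref{lem:conformal}) yields the quantitative estimate of Theorem \ref{thm:capacity}, $\gamma^{lag}_B(L)\le \bigl(2\,\osc f/C_{(f;B,T)}\bigr)\osc_{C^0}(\phi_H^1;o_N)$, which trivially gives the continuity. So: the gap in your proposal is the absence of any actual mechanism producing the needed upper bound, and the fix is to replace your Floer-cycle sketch with the explicit $L_f$-shift computation above, which bypasses the cycle-level analysis entirely.
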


The triangle inequality of $\gamma^{lag}$
stated in section \ref{subsec:triangle} implies the inequalities
\beastar
\left|\gamma^{lag}_B(L_1) - \gamma^{lag}_B(L_2)\right| \leq
\max\left\{\gamma^{lag}_B\left(\phi_{H^2}^{-1}(\phi_{H^1}(o_N));o_N)\right),
\gamma^{lag}_B\left(\phi_{H^1}^{-1}(\phi_{H^2}(o_N));o_N)\right)\right\}.
\eeastar
We also note that for $L_k = \phi_{H^k}^1(o_N) \in \frak{Iso}_B(o_N;T^*N)$ for $k = 1, \, 2$
$$
\max\left\{d_{C^0}(\phi_{H^2}^{-1}(\phi_{H^1}(o_N));o_N)), d_{C^0}(\phi_{H^1}^{-1}(\phi_{H^2}(o_N));o_N))\right\} \to 0
$$
if and only if
$$
\max\left\{
d_{C^0}((\phi_{H^1}(o_N)),\phi_{H^2}(o_N)), d_{C^0}(\phi_{H^1}^{-1}(o_N));(\phi_{H^2})^{-1}o_N))\right\}
\to 0
$$
provided we assume $\supp \phi_{H^k}$ is compact and so
$\supp \phi_{H^k} \subset D^R(T^*N) \setminus T$, $k = 1,\, 2$,
for some $R > 0$ and $T \supset B$. The latter assumption is already embedded in the definition of
Hamiltonian topology given in Definition \ref{defn:hamC0dist}.

Therefore to prove the above theorem, it
is enough to prove the continuity of $\gamma^{lag}_B$
at the zero section $o_N$ in $\frak{Iso}_B(o_N;T^*N)$.

By unravelling the definition of Hamiltonian $C^0$-topology on $ \frak{Iso}_B(o_N;T^*N) $
given in  Definition \ref{defn:hamC0dist}, we now rephrase the continuity statement
at the zero section $o_N$ more explicitly. For this purpose, we introduce the notation
$$
\osc_{C^0}(\phi_H^1;o_N): =
\max\left\{\max_{x \in o_N}d\left(\phi_H^1(x),x\right), \,
\max_{x \in o_N}d\left((\phi_H^1)^{-1}(x),x\right)\right\}.
$$
Then it is easy to see that this continuity at $o_N$ is equivalent to the following

\begin{thm}\label{thm:rhoC0conti}
Let $\lambda_i = \phi_{H_i}$ where $H_i \in \CP C^\infty_{asc}$ is a sequence such that
\begin{enumerate}
\item $H_i \in \CP C^\infty_{R,K}$ for some $R, \, K > 0$ for all $i$ and $s \in [0,1]$,
\item There exists a closed ball $B \subset N$ such that
$\phi_{H_i}^t \equiv id$ on $B$ for all $t \in [0,1]$ for all $i$.
\item There exists a uniform neighborhood $T \supset o_B$ in $T^*N$ such that $\phi_{H_i}^1 \equiv id$
on $T$ for all $i$.
\item  $\lim_{i \to \infty}\osc_{C^0}(\phi_{H_i}^1;o_N)= 0$.
\end{enumerate}
Then
$$
\lim_{i \to \infty}\left(\rho^{lag}(H_i;1) - \rho^{lag}(L_{H_i};[pt]^\#)\right) = 0.
$$
\end{thm}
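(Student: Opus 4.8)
The plan is to prove $\gamma^{lag}_B(L_i)\to 0$ by bounding it from \emph{above}, since it is automatically nonnegative. As $\gamma^{lag}_B$ is insensitive to normalization, we first replace each $H_i$ by its normalization in $\CP C^\infty_{(B;0)}$: the support conditions force $X_{H_i}$ to vanish near $o_B$, hence $H_i$ to be $x$-independent there, $\equiv c_i(t)$, and we adjust the additive constant so that $\int_0^1 c_i=0$. Next we use the duality of Proposition \ref{prop:duality}, $\rho^{(B;0)}(\widetilde L_i;1)=-\rho^{(B;0)}(L_i;[pt]^\#)$ with $\widetilde L_i=\phi_{\widetilde H_i}^1(o_N)$ and $\widetilde H_i(t,x)=-H_i(1-t,x)$, together with the elementary observation that $\widetilde H_i$ again lies in $\CP C^\infty_{(B;0)}$ and satisfies (1)--(4) (its time-one map is $(\phi_{H_i}^1)^{-1}$, whose oscillation along $o_N$ equals that of $\phi_{H_i}^1$). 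This gives $\gamma^{lag}_B(L_i)=\rho^{lag}(H_i;1)+\rho^{lag}(\widetilde H_i;1)$, so it suffices to prove the single statement: for every sequence $G_i\in\CP C^\infty_{(B;0)}$ obeying (1)--(4) one has $\limsup_i\rho^{lag}(G_i;1)\le 0$; applying it to $G_i=H_i$ and to $G_i=\widetilde H_i$ finishes the proof.

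So fix such a $G_i$, put $\delta_i:=\osc_{C^0}(\phi_{G_i}^1;o_N)\to 0$ and $L_i:=\phi_{G_i}^1(o_N)$. From $d(\phi_{G_i}^1(x),x)\le\delta_i$ on $o_N$ one gets $\max_{x\in L_i}|p(x)|\le C\delta_i$, i.e. $L_i$ lies in the shrinking disc bundle $D^{C\delta_i}(T^*N)$, and $L_i$ equals $o_N$ over a neighborhood of $B$. By Proposition \ref{prop:rhoH=rhoK} the number $\rho^{lag}(G_i;1)$ depends on $L_i$ alone, so we may replace $G_i$ by any $G_i'\in\CP C^\infty_{(B;0)}$ with $\phi_{G_i'}^1(o_N)=L_i$. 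The plan is to choose $G_i'$ with small \emph{positive} Hofer energy $E^+(G_i'):=\int_0^1\bigl(-\min_x G_i'(t,x)\bigr)\,dt\to 0$; then monotonicity of Lagrangian spectral invariants together with $\rho^{lag}(c(t);1)=-\int_0^1 c(t)\,dt$ for a purely time-dependent $c$ gives the standard estimate $\rho^{lag}(G_i';1)\le E^+(G_i')$, whence $\rho^{lag}(G_i;1)=\rho^{lag}(G_i';1)\le E^+(G_i')\to 0$, as desired. When $L_i$ is a graph $\Graph(dS_i)$ this is immediate: normalize $S_i$ to vanish on $B$; then $|dS_i|\le C\delta_i$ forces $\|S_i\|_{C^0}\le C'\delta_i$, and a Hamiltonian of the form $\pm S_i$ sweeps $o_N$ onto $L_i$, lies in $\CP C^\infty_{(B;0)}$, and has $E^+\le\|S_i\|_{C^0}\to 0$.

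The main obstacle is the construction of $G_i'$ when $L_i$ is \emph{not} a graph (folds over the caustic of the projection $L_i\to N$): one needs a Hamiltonian isotopy $o_N\rightsquigarrow L_i$, trivial near $o_B$, whose \emph{negative} part is $O(\delta_i)$ — that is, a way to sweep a $C^0$-small exact deformation of $o_N$ up from below cheaply. Here the intended key reduction is that $\rho^{lag}(G_i;1)$, computed from the geometric form of the Floer complex $CF(L_i,o_N)$ with the standard Sasakian almost complex structure, only involves holomorphic strips which, by the maximum principle applied to $|p|^2$, remain inside $D^{C\delta_i}(T^*N)$; this confines the whole question to an arbitrarily thin Weinstein neighborhood of $o_N$ and reduces the construction of $G_i'$ to a local, essentially one-jet-level matter that can then be carried out with oscillation $O(\delta_i)$ at every time. (For completeness, the matching lower bound $\liminf_i\rho^{lag}(G_i;1)\ge 0$ is soft: by Theorem \ref{thm:rhoversusfH}, $\rho^{lag}(G_i;1)\ge\max f_{G_i}$, while $|df_{G_i}|\le\max_{x\in L_i}|p(x)|\le C\delta_i$ uniformly and $f_{G_i}\equiv 0$ on $B$, so $\|f_{G_i}\|_{C^0}\le C\delta_i\,\diam N\to 0$; but only the upper bound enters the theorem.)
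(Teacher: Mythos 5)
Your proposal correctly identifies the one-sided reduction via duality (Proposition~\ref{prop:duality}), handles the graph case cleanly, and records the soft lower bound from Theorem~\ref{thm:rhoversusfH}. However, the upper bound — the actual content of the theorem — rests on a step that is not established and, as far as is known, cannot be established in the generality you need. You want to produce, given that $L_i=\phi_{G_i}^1(o_N)$ lies in a thin disc bundle $D^{C\delta_i}(T^*N)$ and coincides with $o_N$ near $o_B$, a Hamiltonian $G_i'\in\CP C^\infty_{(B;0)}$ with $\phi_{G_i'}^1(o_N)=L_i$ and with positive Hofer norm $E^+(G_i')=O(\delta_i)$. When $L_i\to N$ has caustics, this is precisely the hard question of whether $C^0$-smallness of a Hamiltonian deformation forces existence of a Hofer-small generating isotopy. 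That is essentially Viterbo's conjecture, which the paper deliberately records as \emph{open} (Question~\ref{ques:ham-continuity}) and explicitly declines to use. The confinement of holomorphic strips to $D^{C\delta_i}(T^*N)$ via the maximum principle is true but irrelevant to the construction: it controls the \emph{differences} of actions along Floer trajectories (the symplectic areas), not the action values $h_{L_i}$ themselves, and it certainly does not produce a Hamiltonian isotopy from $o_N$ to $L_i$ inside that tube with small negative part. The remark that this ``reduces the construction of $G_i'$ to a local, essentially one-jet-level matter'' is where the proof stops; there is no argument filling that gap. Note also that the oscillation bound $\|f_{G_i}\|_{C^0}\le C\delta_i\diam N$ only controls the basic phase function — by Theorem~\ref{thm:rhoversusfH} this gives $\rho^{lag}(G_i;1)\ge\max f_{G_i}$, not $\le$, so it cannot be recycled into the upper bound.

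The paper circumvents the obstacle entirely by adopting the Ostrover--Seyfaddini ``$\e$-shiftability'' scheme rather than trying to build a cheap generating Hamiltonian for $L_i$. It fixes a Morse function $f$ with $\Crit f\subset \Int B$ and $\Graph df\subset T$, proves (Lemma~\ref{lem:(T,f)}) that when $\osc_{C^0}(\phi_{H}^1;o_N)<C_{(f;B,T)}$ the intersection $\phi_H^1(L_f)\cap o_N$ is forced to coincide with $\Crit f$, and hence that all Hamiltonian chords of $H\#(f\circ\pi)$ are constant and lie over $B$ where $H$ is trivial. This makes the action spectrum of $H\#(f\circ\pi)$ computable, giving $\rho^{lag}(H\#f;1)-\rho^{lag}(H\#f;[pt]^\#)\le\osc f$ (Lemma~\ref{lem:hLs=hLf}). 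Two applications of the triangle inequality for $\rho^{lag}$ then give $\gamma^{lag}_B(L_H)\le 2\osc f$, and finally rescaling $f\mapsto\lambda f$ with $\lambda$ proportional to $\osc_{C^0}(\phi_H^1;o_N)$ (legal since $C_{(\lambda f;B,T)}=\lambda C_{(f;B,T)}$ for small $\lambda$) produces the quantitative bound \eqref{eq:ratio} and hence the continuity. This scheme probes $L_H$ from the outside with a fixed, known Hamiltonian rather than asking for a cheap Hamiltonian generating $L_H$ itself, which is exactly what avoids the unresolved step in your argument.
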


The proof of this theorem is an adaptation to the
Lagrangian context of the one used by Seyfaddini in his proof of
Theorem 1 (or rather Corollary 1.3) \cite{seyfad}. The proof is also
a variation of Ostrover's scheme used in \cite{ostrov} and is an
adaptation thereof. In our proof, we however use the Lagrangian analog to the notion of
`$\e$-shiftability' introduced by Seyfaddini \cite{seyfad}, instead
of `displaceability' used in \cite{ostrov} and in other literature
such as \cite{entov-pol:morphism}, \cite{usher:sharp}. In the Lagrangian context here,
the $\e$-shiftable domain is realized
as the graph of $df$ of a function $f$ having no critical points on the corresponding domain.
In this regard, it appears to the author
that the notion of $\e$-shiftability becomes more geometric and intuitive in the
Lagrangian context than in the Hamiltonian context.

\subsection{$\e$-shifting of the zero section by the differential of function}
\label{subsec:shifting}

Fix a Riemannian metric $g$ and the Levi-Civita connection on $N$. They
naturally induces a metric on $T^*N$. Denote the latter metric on $T^*N$ by
$\widetilde g$ and the corresponding distance function by
$\widetilde d(x,y)$ for $x, \, y \in T^*N$.
We denote by $D^r(T^*N)$ the disc bundle of $T^*N$ of radius $r$.

The following is the well-known
fact on this metric $\widetilde g$, which can be easily checked.

\begin{lem}\label{lem:rNg} The metric $\widetilde g$ carries following properties:
\begin{enumerate}
\item $\widetilde g$ is invariant under the reflection $(q,p) \mapsto (q,-p)$ and
in particular $o_N$ is totally geodesic.
\item There exists a sufficiently small $r = r(N,g) > 0$ depending only on $(N,g)$
such that
\begin{enumerate}
\item for all $d(q,q')< r$ $\widetilde d(o_q,o_{q'}) = d(q,q')$,
\item for all $x \in D^r(T^*N)$, which we denote $x = (q(x),p(x))$,
\be\label{eq:g-triangle}
d(o_{q(x)},x) \geq \max\{|p(x)|, d(q,q(x))\} \geq |p(x)|
\ee
where $|p(x)|$ is the norm on $T_{q(x)}^*N$.
\end{enumerate}
\end{enumerate}
\end{lem}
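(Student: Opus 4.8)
The statement to prove is Lemma~\ref{lem:rNg}, which records elementary metric properties of the Sasaki-type metric $\widetilde g$ on $T^*N$ induced by a Riemannian metric $g$ and its Levi-Civita connection on $N$.

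\textbf{Overall plan.} The proof is essentially a sequence of standard Riemannian-geometry verifications, so the plan is to set up the splitting $T_x(T^*N) = H_x \oplus V_x$ into horizontal and vertical subspaces, recall that $\widetilde g$ is defined to make this splitting orthogonal with $\widetilde g|_{V_x}$ the cometric on the fiber and $\widetilde g|_{H_x}$ the pullback of $g$ under $d\pi$, and then read off each claimed property. First I would dispense with part (1): the fiberwise reflection $\iota\colon (q,p)\mapsto (q,-p)$ is linear on fibers and commutes with parallel transport, hence $\iota_*$ preserves both the horizontal and vertical distributions (acting as $-\mathrm{id}$ on $V_x$ via the fiber linear structure and as the identity on $H_x$ via $d\pi$), so $\iota^*\widetilde g = \widetilde g$; since $o_N = \mathrm{Fix}(\iota)$ is the fixed-point set of an isometric involution, it is totally geodesic, which is the standard fact I would cite.

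\textbf{Part (2).} For (2)(a), note that along the zero section the horizontal lift of a curve $t\mapsto q(t)$ in $N$ is just $t\mapsto o_{q(t)}$, and on horizontal vectors tangent to $o_N$ the metric $\widetilde g$ restricts to $g$ via $d\pi$; combined with part (1) (so that $o_N$ is totally geodesic, hence geodesics of $\widetilde g$ between two nearby zero-section points stay in $o_N$), this gives $\widetilde d(o_q,o_{q'}) = d(q,q')$ once $d(q,q') < r$ with $r$ smaller than the injectivity radius of $(N,g)$. For (2)(b), given $x = (q(x),p(x)) \in D^r(T^*N)$, I would estimate the length of an arbitrary path $\gamma$ in $T^*N$ from $o_{q(x)}$ to $x$ by projecting and by measuring the vertical part: the projection $\pi\circ\gamma$ is a path in $N$ from $q$ to $q(x)$, so its $\widetilde g$-length bounds $d(q,q(x))$ from below (horizontal projection is $1$-Lipschitz onto $(N,g)$); and comparing the vertical displacement along $\gamma$ — using that parallel transport is a fiberwise linear isometry for the cometric so that the vertical contribution to arclength dominates the change in $|p|$ — gives a lower bound by $|p(x)|$ when $r$ is small enough that curvature terms in the connection coefficients of $\widetilde g$ on $D^r$ are controlled. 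Taking the infimum over $\gamma$ yields \eqref{eq:g-triangle}.

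\textbf{Main obstacle.} The only genuinely non-formal point is the vertical estimate in (2)(b): along a path $\gamma$ the fiber coordinate changes by both the vertical velocity and the effect of moving the basepoint (parallel transport), so bounding $d(o_{q(x)},x) \geq |p(x)|$ requires knowing that on $D^r(T^*N)$ the Sasaki metric is uniformly comparable to the flat ``product'' model in Fermi-type coordinates — this is where the smallness of $r = r(N,g)$ is used, and I would make it precise by writing $\widetilde g$ in normal coordinates on $N$ together with a parallel frame of $T^*N$ and observing that the metric coefficients are $\mathrm{id} + O(r)$ on $D^r$, so for $r$ small the projection onto the vertical factor is $1$-Lipschitz up to an error that can be absorbed. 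Everything else — the orthogonality of the splitting, invariance under $\iota$, total geodesy of $o_N$, and the base estimate $d(o_{q(x)},x)\geq d(q,q(x))$ — is immediate from the definition of $\widetilde g$ and standard facts, and I would present those crisply without belaboring the computations.
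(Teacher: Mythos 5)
The paper does not prove this lemma---it is merely asserted right before the statement as a ``well-known fact on this metric $\widetilde g$, which can be easily checked''---so there is no paper argument to compare against; your proof is correct and follows the natural line of reasoning via the Sasaki horizontal/vertical splitting. One substantive remark on (2)(b): the ``curvature terms'' you identify as the main obstacle are a red herring, and $d(o_q,x)\geq|p(x)|$ in fact holds with no smallness restriction on $r$. For a path $\gamma(t)=\alpha(t)\in T^*_{c(t)}N$ from the zero section to $x$, the vertical component of $\dot\gamma(t)$ in the Sasaki splitting is by definition the covariant derivative $\frac{D\alpha}{dt}$, and because parallel transport is a fiberwise isometry for the cometric one has $\bigl|\tfrac{d}{dt}|\alpha(t)|\bigr|\leq\bigl|\tfrac{D\alpha}{dt}\bigr|\leq|\dot\gamma(t)|_{\widetilde g}$; integrating from $\alpha(0)=0$ gives $|p(x)|\leq L(\gamma)$ for every competitor path, with no Fermi coordinates, no ``$\mathrm{id}+O(r)$'' comparison, and no control on $D^r$ needed. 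Likewise $d(o_q,x)\geq d(q,q(x))$ is global, from $\pi$ being a Riemannian submersion and hence $1$-Lipschitz, as you note. For (2)(a) your invocation of total geodesy plus the injectivity radius of $N$ is slightly incomplete as stated (total geodesy alone does not force a minimizing $\widetilde g$-geodesic between $o_q$ and $o_{q'}$ to stay in $o_N$); the cleaner route is $\widetilde d(o_q,o_{q'})\leq d(q,q')$ by lifting a minimizing $N$-geodesic into $o_N$ (on which $\widetilde g$ restricts to $g$) and $\widetilde d(o_q,o_{q'})\geq d(q,q')$ again by the submersion estimate, so equality is also global. Thus none of the asserted inequalities actually requires $r$ small; $r(N,g)$ merely fixes a convenient working neighborhood for the later applications in Section~\ref{sec:Hamiltonian}. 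As an aside, the printed form of \eqref{eq:g-triangle} contains an undeclared $q$; comparison with its use in the proof of Lemma~\ref{lem:(T,f)}, where it is invoked to conclude $|df(p')|\leq d(o_p,df(p'))$ with $p\neq p'$, shows the intended reading is $d(o_q,x)\geq\max\{|p(x)|,d(q,q(x))\}$ for an arbitrary basepoint $q\in N$, which is exactly what the path-length argument above establishes.
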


From now on, we will drop `tilde' from $\widetilde d$ and just denote by $d$
even for the distance function of $\widetilde g$ on $T^*N$ which should not confuse readers.

Consider the subset
$$
C^\infty_{crit}(N;B) = \{f \in C^\infty(N) \mid \Crit f \subset \operatorname{Int} B\}.
$$
The set $C^\infty_{crit}(N;B) \subset C^\infty(N)$ has the filtration
$$
C^\infty_{crit}(N;B) = \bigcup_T C^\infty_{crit}(N;B,T)
$$
where $C^\infty_{crit}(N;B,T)$ is the subset of $C^\infty_{crit}(N;B)$
that consists of $f$'s satisfying
\be\label{eq:dfBT}
\Graph (df|_B) \subset T.
\ee
It is easy to check that $C^\infty_{crit}(N;B,T) \neq \emptyset$ for any such $T \supset o_B$
by considering the $\lambda f$ for a sufficiently small $\lambda > 0$ for any given
Morse function $f$ with $\Crit f \subset \Int B$.

We now introduce the collection, denoted by $\CT_{(B;r)}$,
of the pairs $(T,f)$ consisting of a tubular neighborhood $T \supset o_B$
in $T^*N$ and a Morse function $f \in C^\infty_{crit}(N;B,T)$ such that
\be\label{eq:dfDr}
\Graph df \subset D^r(T^*N)
\ee
for the constant $r = r(N,g)$ given in Lemma \ref{lem:rNg}.

By the choice of the pair $(T,f) \in \CT_{(B;r)}$, we have
$$
\min\left\{\min_{p \in N \setminus B} |df(p)|, \,
d_{\text{\rm H}}(N \setminus B, \Crit f)\right\} > 0.
$$
where $d_{\text{H}}(N\setminus B,\Crit f)$ is the Hausdorff distance.

\begin{defn}\label{defn:CfBT}
We define a positive constant
\be\label{eq:C1-f}
C_{(f;B,T)}: = \min\left \{\min_{p \in N\setminus B}|df(p)|,
d_{\text H}(N\setminus B,\Crit f)\right\}
\ee
\end{defn}

By definition of $C_{(f;B,T)}$, if $q \in N \setminus B$, we have
\be\label{eq:|df(q)|}
|df(q)|, \, d(q, \Crit f) \geq C_{(f;B,T)} > 0.
\ee

\begin{lem}\label{lem:conformal} For any $f \in  C^\infty_{crit}(N;B,T)$,
$$
C_{(\delta f;B,T)} = \min_{p \in N\setminus B}|d(\delta f)(p)|
$$
whenever $\delta > 0$ is so small that
$$
\min_{p \in N\setminus B}|d(\delta f)(p)| < d_{\text{H}}(N\setminus T, B).
$$
In particular, for such $\delta > 0$,
\be\label{eq:C1conformal}
\lambda C_{(\delta f;B,T)} = C_{(\lambda \delta f;B,T)}
\ee
for any $\lambda \leq 1$.
\end{lem}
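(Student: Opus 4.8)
The plan is to strip the $\delta$-dependence out of $C_{(\delta f;B,T)}$ by two elementary observations and then to check that the stated smallness of $\delta$ is precisely what makes the first entry in the minimum of \eqref{eq:C1-f} the smaller one.

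First I would record that for every $\delta>0$ one has $\Crit(\delta f)=\Crit f$ — so $\Crit(\delta f)\subset\Int B$ and the second entry of \eqref{eq:C1-f} is literally unchanged under $f\mapsto\delta f$ — while $d(\delta f)=\delta\,df$ pointwise, so $|d(\delta f)(p)|=\delta\,|df(p)|$ for all $p\in N$. Substituting into Definition~\ref{defn:CfBT} gives, for every $\delta>0$,
$$
C_{(\delta f;B,T)}\;=\;\min\bigl\{\,\delta A\,,\ D\,\bigr\},\qquad A:=\min_{p\in N\setminus B}|df(p)|>0,\quad D:=d_{\text{H}}(N\setminus B,\Crit f)>0,
$$
where $D$ is independent of $\delta$.

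Next I would invoke the hypothesis $\delta A=\min_{p\in N\setminus B}|d(\delta f)(p)|<d_{\text{H}}(N\setminus T,B)$ together with the comparison $d_{\text{H}}(N\setminus T,B)\le d_{\text{H}}(N\setminus B,\Crit f)=D$, which holds for the pairs $(T,f)\in\CT_{(B;r)}$ at hand: it rests on the inclusions $\Crit f\subset\Int B\subset T\cap o_N$ (so that $N\setminus T\subset N\setminus B$) and on $T$ being attached to $f$ narrowly enough in the base directions, which is exactly the content arranged in the construction around Lemma~\ref{lem:(T,f)}. Chaining these, $\delta A<D$, and therefore
$$
C_{(\delta f;B,T)}=\min\{\delta A,\ D\}=\delta A=\min_{p\in N\setminus B}|d(\delta f)(p)| ,
$$
which is the first assertion.

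The conformality statement \eqref{eq:C1conformal} is then purely formal. For $\delta$ in the range above and $\lambda\in(0,1]$ one has $\lambda\delta\,A\le\delta\,A<d_{\text{H}}(N\setminus T,B)$, so $\lambda\delta$ satisfies the same smallness hypothesis and the identity just proved applies verbatim with $\delta$ replaced by $\lambda\delta$, giving $C_{(\lambda\delta f;B,T)}=\lambda\delta\,A=\lambda\,C_{(\delta f;B,T)}$. I expect the only non-bookkeeping point to be the metric comparison $d_{\text{H}}(N\setminus T,B)\le d_{\text{H}}(N\setminus B,\Crit f)$ — one has to be careful about how small $T$ is taken relative to $B$ and $\Crit f$ — and would import it from the earlier set-up rather than reprove it; the scaling of $df$ and of $\Crit$, and the derivation of \eqref{eq:C1conformal}, are all immediate.
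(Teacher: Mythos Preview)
Your core argument is the same as the paper's: you observe that $\Crit(\delta f)=\Crit f$ so the second entry $D=d_{\text{H}}(N\setminus B,\Crit f)$ in Definition~\ref{defn:CfBT} is independent of $\delta$, while the first entry scales as $\delta A$ and hence eventually drops below $D$. The paper's proof stops there---it simply says the first term realizes the minimum ``for all sufficiently small $\delta>0$'' and declares the lemma proved, without ever tying the threshold to the specific quantity $d_{\text{H}}(N\setminus T,B)$ appearing in the hypothesis.

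Where you differ is in trying to make that hypothesis do real work. You insert the chain $\delta A<d_{\text{H}}(N\setminus T,B)\le d_{\text{H}}(N\setminus B,\Crit f)=D$, and you flag the middle inequality as the one substantive point, claiming it is ``arranged in the construction around Lemma~\ref{lem:(T,f)}.'' It is not: nothing in the paper's set-up guarantees $d_{\text{H}}(N\setminus T,B)\le d_{\text{H}}(N\setminus B,\Crit f)$, and Lemma~\ref{lem:(T,f)} (which comes \emph{after} this lemma) does not address it either. Indeed $T$ is a neighborhood of $o_B$ in $T^*N$, so even making sense of $N\setminus T$ requires interpretation, and there is no narrowness condition on $T$ relative to $\Crit f$ in the definition of $\CT_{(B;r)}$. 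The paper sidesteps this entirely; when the lemma is actually applied in the proof of Theorem~\ref{thm:capacity} (see \eqref{eq:min<d} and \eqref{eq:choicee}), the threshold used is $d_{\text{H}}(N\setminus B,\Crit f)$ directly, suggesting the $d_{\text{H}}(N\setminus T,B)$ in the lemma statement is a slip. So your extra step is attempting to justify something the paper itself neither proves nor needs---drop it, and your argument collapses to exactly the paper's.
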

\begin{proof} First note that the distance $d_{\text{\rm H}}(N\setminus B,\Crit(\delta f))$
does not depend on $\lambda$ and that
$$
\min_{p \in N\setminus B}|\delta df(p)| = \delta \min_{p \in N\setminus B}|df(p)| \to 0
$$
as $\delta \to 0$.
Therefore the minimum in the definition
$$
C_{(\delta f;B,T)} = \min\left\{\min_{p \in N\setminus B}|d(\delta f)(p)|,
d_{\text{\rm H}}(N\setminus B, \Crit (\delta f))\right\}
$$
is realized by $\min_{p \in N\setminus B}|d(\delta f)(p)|$ for all sufficiently small
$\delta > 0$. Then the lemma follows.
\end{proof}

Now we consider the Hamiltonians $H$ adapted to the triple $(f;B,T)$ as in
the definition of Hamiltonain $C^0$-topology of $\frak{Iso}_B(o_N;T^*N)$.

\begin{lem}\label{lem:(T,f)}
Let $T \supset o_B$ in $T^*N$ and $H \in \CP C^\infty_{asc;B}$ satisfy
\be\label{eq:suppphiH1}
\phi_H^1 \equiv id
\ee
on $T$. Then we have
$$
L_f \cap o_N = \phi_{H}^1(L_f) \cap o_N
$$
whenever $H$ satisfies
\be\label{eq:distphiH}
\osc_{C^0}(\phi_H^1;o_N) < C_{(f;B,T)}.
\ee
In particular all the Hamiltonian trajectories of $H\# (f\circ \pi)$,
are constant equal to $o_p$ for some point $p \in \Crit f$ for such Hamiltonian $H$.
\end{lem}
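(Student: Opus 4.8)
The plan is to prove the set equality $L_f\cap o_N=\phi_H^1(L_f)\cap o_N$ by showing that both sides coincide with the zero-section piece $o_{\Crit f}=\{(q,0):q\in\Crit f\}$ over the critical set of $f$, and then to deduce the ``in particular'' clause from the standard correspondence between Hamiltonian chords with endpoints on $o_N$ and intersection points of a time-one Lagrangian image with $o_N$. First I would record the two features of the hypothesis $(T,f)\in\CT_{(B;r)}$ that drive everything: $\Graph(df|_B)\subset T$, so $\phi_H^1$ restricts to the identity on this piece of $L_f=\Graph df$; and $L_f\subset D^r(T^*N)$, so Lemma \ref{lem:rNg}(2)(b) applies to every point of $L_f$. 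Since $\Crit f\subset\Int B$, we get at once $L_f\cap o_N=o_{\Crit f}\subset o_B\subset T$, and because $\phi_H^1\equiv id$ on $T$ this set is fixed pointwise by $\phi_H^1$; as it also lies in $L_f$, this already yields the inclusion $L_f\cap o_N=o_{\Crit f}\subset\phi_H^1(L_f)\cap o_N$.

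For the reverse inclusion I would take $y\in\phi_H^1(L_f)\cap o_N$, write $y=\phi_H^1(x)$ with $x=(q,df(q))\in L_f$, and split on whether $q\in B$. If $q\in B$ then $x\in\Graph(df|_B)\subset T$, hence $\phi_H^1(x)=x$, so $y=x$ lies in $o_N$, forcing $df(q)=0$, i.e.\ $q\in\Crit f$ and $y\in o_{\Crit f}$. If instead $q\in N\setminus B$, I would derive a contradiction with the distance hypothesis: from $x=(\phi_H^1)^{-1}(y)$ and $y\in o_N$ we get $d(x,y)\le\osc_{C^0}(\phi_H^1;o_N)<C_{(f;B,T)}$, while $x\in L_f\subset D^r(T^*N)$ together with Lemma \ref{lem:rNg}(2)(b) gives $d(x,o_N)\ge|p(x)|=|df(q)|$, and $|df(q)|\ge C_{(f;B,T)}$ by \eqref{eq:|df(q)|}; combining, $C_{(f;B,T)}>d(x,y)\ge d(x,o_N)\ge C_{(f;B,T)}$, which is absurd. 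Hence $\phi_H^1(L_f)\cap o_N\subset o_{\Crit f}$, and both sides equal $o_{\Crit f}$.

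For the ``in particular'', set $G:=H\#(f\circ\pi)$, so that $\phi_G^t=\phi_H^t\circ\phi_{f\circ\pi}^t$ and in particular $\phi_G^1(o_N)=\phi_H^1(\phi_{f\circ\pi}^1(o_N))$. Since $\phi_{f\circ\pi}^1(o_N)$ is the graph of $df$ up to an overall sign, and since $\Crit f$, the constant $C_{(f;B,T)}$ and the condition $\Graph df\subset D^r(T^*N)$ are all unchanged under $f\mapsto-f$, the equality just proved (applied to $f$ or to $-f$) gives $o_N\cap\phi_G^1(o_N)=o_{\Crit f}$. By the correspondence of Lemma \ref{lem:equiv}(1) the Hamiltonian chords of $G$ with both endpoints on $o_N$ are then exactly the paths $z_{o_p}^G(t)=\phi_G^t((\phi_G^1)^{-1}(o_p))$ for $p\in\Crit f$; and each of these is constant, because $df(p)=0$ makes $o_p$ a fixed point of $\phi_{f\circ\pi}^t$ for all $t$, while $H\in\CP C^\infty_{asc;B}$ makes $X_H$ vanish along $o_B\ni o_p$ so that $\phi_H^t(o_p)=o_p$ for all $t$; composing, $\phi_G^t(o_p)\equiv o_p$, hence $z_{o_p}^G\equiv o_p$. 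The only step that is more than bookkeeping is the distance estimate in the case $q\in N\setminus B$: it relies essentially on membership in $\CT_{(B;r)}$ putting $L_f$ inside the disc bundle $D^r(T^*N)$, so that Lemma \ref{lem:rNg}(2)(b) is available, and on the constant $C_{(f;B,T)}$ having been defined precisely so as to bound $|df|$ from below on $N\setminus B$.
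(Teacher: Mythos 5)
Your proof is correct and follows the paper's argument in all essentials: both inclusions of $L_f\cap o_N=\phi_H^1(L_f)\cap o_N$ are obtained by using $\phi_H^1\equiv id$ on $T\supset\Graph(df|_B)$ for the easy inclusion, and the lower bound $|df|\geq C_{(f;B,T)}$ on $N\setminus B$ together with Lemma \ref{lem:rNg} and the hypothesis $\osc_{C^0}(\phi_H^1;o_N)<C_{(f;B,T)}$ for the reverse; the ``in particular'' clause is handled by the same two observations (constancy of $\phi_{f\circ\pi}^t$ at $\Crit f$ and of $\phi_H^t$ near $o_B$). The only (immaterial) deviation is that you split cases on the base point of the preimage $(\phi_H^1)^{-1}(y)\in L_f$ and reach the contradiction in one step, whereas the paper splits on the base point of $y$ itself and, in the bad case, first infers $(\phi_H^1)^{-1}(o_p)\in T$ and then uses $\phi_H^1|_T=id$ before contradicting $p\in N\setminus B$; your version is slightly shorter, the paper's version records the extra intermediate fact that the preimage lands in $T$. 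You also flag the sign ambiguity in $\phi_{f\circ\pi}^1(o_N)=\Graph(\pm df)$, which the paper passes over silently; this is a harmless precaution since $\Crit f$, $C_{(f;B,T)}$, and the $D^r$-condition are all invariant under $f\mapsto -f$.
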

\begin{proof} In the proof, we will denote $p \in N$ and the corresponding point
in the zero section of $T^*N$ by $o_p$ for the notational consistency.

Obviously we have $\Crit f = L_f \cap o_B \subset \phi_{H}^1(L_f) \cap o_N$
since we assume $\phi_H^1 \equiv id$ on a neighborhood, $T$, of $o_B \supset \Crit f$.

We will now prove the opposite inclusion $\phi_{H}^1(L_f) \cap o_N \subset L_f \cap o_B$.
Suppose $o_p \in \phi_{H}^1(L_f) \cap o_N$. Then we have $(\phi_{H}^1)^{-1}(o_p) \in L_f$.

Consider first the case $p \in B$.
In this case since we assume $\phi_{H}^1 = id$ on a neighborhood of $o_B$,
it in particular implies $o_p = (\phi_{H}^1)^{-1}(o_p)$
for all $i$ and hence $o_p \in o_B \cap L_f \cong \Crit f$.

Now we will show that $p$ cannot lie in $N \setminus B$.
Suppose $p \in N \setminus B$ to the contrary and write
$$
(\phi_{H}^1)^{-1}(o_p) = df(p')
$$
for some $p' \in N$. Therefore
$$
d(o_p, df(p')) = d(o_p,(\phi_{H}^1)^{-1}(o_p)) \leq \osc_{C^0}(\phi_H^1;o_N).
$$
Furthermore we also have $|df(p')| \leq d(o_p, df(p'))$ by Lemma \ref{lem:rNg}
since $\Graph df \subset D^r(T^*N)$. Therefore we have shown
\be\label{eq:upperbound}
|df(p')| \leq \osc_{C^0}(\phi_H^1;o_N) < C_{(f;B,T)}.
\ee
This in particular implies $(\phi_{H}^1)^{-1}(o_p)= df(p')$ must lie in $\Graph df|_B \subset T$ for otherwise
$|df(p')| \geq C_{(f;B,T)}$ by definition of $C_{(f;B,T)}$ which would
contradict to \eqref{eq:upperbound}.

This in turn implies $(\phi_{H}^1)^{-1}(o_p) \in T$. But $\phi_H^1$ is assumed to be the identity map
on $T$ and hence follows
$$
o_p =(\phi_{H}^1)^{-1}(o_p) = df(p').
$$
In particular $df(p') \in o_N$ and so $p' \in \Crit f$ and hence $o_{p'} = df(p')$. This
implies $p = p'$ and so $d(p, \Crit f) = 0$, i.e., $p \in \Crit f \subset B$,
a contradiction to the hypothesis $p \in N\setminus B$.
Therefore $p$ cannot lie in $N \setminus B$ and hence proves $o_p \in o_B \cap L_f \cong \Crit f$
for any $o_p \in \phi_{H}^1(L_f) \cap o_N$. This then finishes the proof of the first statement
\be\label{eq:Lfint=Ltint}
L_f \cap o_N = \phi_H^1(L_f) \cap o_N.
\ee

To prove the second statement, the first statement of the lemma implies that
all the Hamiltonian trajectories of $H \# f\circ \pi$ ending at a point
in $\phi_{H}^1(L_f) \cap o_N$ have the form
$$
z_p^{H\# f\circ \pi}(t) = \phi_{H\# f\circ \pi}^t((\phi_{H\# f\circ \pi}^1)^{-1}(o_p))
$$
for some intersection point $o_p \in \phi_{H}^1(L_f) \cap o_N = L_f \cap o_N$.
By definition, we have $z_p^{H\# f\circ \pi}(1) = o_p$.

But we also have $df(p) = 0$ and $(\phi_{H}^1)^{-1}(o_p) = o_p$ since
$$
o_p \in \phi_{H}^1(L_f) \cap o_N = L_f \cap o_N \subset o_B \cap \Crit f
$$
and $\phi_{H}^1 \equiv id$ near $p$. Therefore
$$
(\phi_{H\# f\circ \pi}^1)^{-1}(o_p) = (\phi_{f\circ \pi}^1)^{-1}(\phi_{H}^1)^{-1}(o_p) = o_p.
$$
Therefore
\beastar
z_p^{H\# f\circ \pi}(t) & = & \phi_{H\# f\circ \pi}^t((\phi_{H\# f\circ \pi}^1)^{-1}(o_p))
= \phi_{H\# f\circ \pi}^t(o_p) \\
& = &\phi_H^t(\phi_{f\circ \pi}^t(o_p)) = \phi_H^t(o_p) = o_p
\eeastar
since $df(p) = 0$ and $\phi_{H}^t(o_p) = o_p$ for all $t \in [0,1]$. The last statement
follows since we assume $\supp \phi_{H} \cap o_B = \emptyset$: By compactness of $\supp \phi_{H}$
and the closeness of $B$, $\supp \phi_{H} \cap o_B = \emptyset$
implies $\phi_{H}^t \equiv id$ for all $t \in [0,1]$
on a neighborhood $T' \supset o_B$ in $T^*N$.

This finishes the proof.
\end{proof}

\begin{rem}\label{rem:support} We would like to mention that in the above proof,
the choice of the neighborhood $T' \supset B$ is allowed to vary depending on $H$'s.
This is because our Hamiltonian $C^0$-topology requires only $\supp \phi_{H}^t \cap o_B = \emptyset$
for $t \in [0,1]$, not the existence of uniform neighborhood $T \supset o_B$ independent of $H$.
It only requires existence of such uniform neighborhood for the time-one map $\phi_H^1$.
\end{rem}

\begin{rem} In fact all the discussion in this subsection can be generalized by
replacing the differential $df$ by any closed one form $\alpha$ and $\Crit f$ by
the zero set of $\alpha$. But we restrict to the exact case since
the discussion in the next subsection seems to require the exactness of the form.
\end{rem}

\subsection{Lagrangian capacity versus Hamiltonian $C^0$-fluctuation}
\label{subsec:capacity-fluctuation}

In fact, Theorem \ref{thm:rhoC0conti} is an immediate consequence of the following
comparison result between the Lagrangian capacity
$\gamma^{lag}_B(L) = \rho^{lag}(H;1) - \rho^{lag}(H;[pt]^\#$ and the
Hamiltonian $C^0$-fluctuation $\osc_{C^0}(\phi_H^1;o_N)$ for $L = \phi_H^1(o_N)$
for $H \in \CP^\infty_{asc;B}$, which itself has some independent interest in its own right.

\begin{thm}\label{thm:capacity} Let $B \subset N$ be a closed ball and
$(T,f) \in \CT_{(B;r)}$. Consider the set of Hamiltonians $H$
satisfying $\supp \phi_H \cap o_B =\emptyset$ and assume
$$
\osc_{C^0}(\phi_H^1;o_N) < C_{(f;B,T)}.
$$
Then we have
\be\label{eq:ratio}
\frac{\gamma^{lag}_B(L)}{\osc_{C^0}(\phi_H^1;o_N)} \leq
\frac{2\, \osc f}{C_{(f;B,T)}}
\ee
for $L = \phi_H^1(o_N)$.
\end{thm}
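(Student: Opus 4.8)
The plan is to run, in the Lagrangian cotangent setting, the scheme of Ostrover and Seyfaddini \cite{seyfad, ostrov}. Abbreviate $\e:=\osc_{C^0}(\phi_H^1;o_N)$ and fix a scaling parameter $\lambda\in(\e/C_{(f;B,T)},1]$ (this range is nonempty by the standing hypothesis $\e<C_{(f;B,T)}$). The idea has three parts: (i) the zero section ``shifted'' by $d(\lambda f)$ is so rigid that $K:=H\#(\lambda f\circ\pi)$ has \emph{all} its chords constant, hence its action spectrum lies in a window of length $\le\lambda\,\osc f$; (ii) passing from $K$ back to a Hamiltonian generating $L$ costs a further $\le\lambda\,\osc f$ in the triangle inequality; (iii) optimising $\lambda\downarrow\e/C_{(f;B,T)}$ gives the asserted bound.

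For (i): scaling the fibre coordinate by $\lambda\le1$ keeps $\Graph d(\lambda f)$ inside $D^r(T^*N)$ and inside the (fibrewise star-shaped) tube $T$, so $(T,\lambda f)\in\CT_{(B;r)}$; moreover $C_{(\lambda f;B,T)}\ge\lambda\,C_{(f;B,T)}>\e$, by the elementary inequality $\min\{\lambda a,b\}\ge\lambda\min\{a,b\}$ applied to \eqref{eq:C1-f}. Hence Lemma \ref{lem:(T,f)} applies to $\lambda f$: every Hamiltonian chord of $K$ is a constant path $c_{o_p}$ with $p\in\Crit f$. (Throughout, $\lambda f\circ\pi$ is silently replaced by an asymptotically constant Hamiltonian agreeing with $q\mapsto\lambda f(\pi(q))$ on a large disc bundle; since $\bigcup_t\phi^t_{\lambda f\circ\pi}(o_N)\subset D^r(T^*N)$ this alters none of the spectral invariants entering the argument.) Since $\supp\phi_H\cap o_B=\emptyset$ forces $\phi_H^t\equiv\mathrm{id}$ and $H_t$ to be constant in the fibre directions near $o_B$, a one-line computation gives $\CA^{cl}_K(c_{o_p})=-\int_0^1 H_t(o_p)\,dt-\lambda f(p)$, the first summand being independent of $p$, so that $\osc(\Spec(K;N))=\lambda\,\osc(f|_{\Crit f})\le\lambda\,\osc f$. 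As $\rho^{lag}(K;1),\rho^{lag}(K;[pt]^\#)\in\Spec(K;N)$ by Proposition \ref{prop:tight}, this yields
\[
\rho^{lag}(K;1)-\rho^{lag}(K;[pt]^\#)\ \le\ \lambda\,\osc f .
\]

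For (ii): put $G:=K\#\widetilde{\lambda f\circ\pi}$. Since $\lambda f\circ\pi$ is autonomous, $\widetilde{\lambda f\circ\pi}=-(\lambda f\circ\pi)$ and $\widetilde{\lambda f\circ\pi}\#(\lambda f\circ\pi)\equiv0$, so $G\#(\lambda f\circ\pi)=K$ and $\phi_G^1=\phi_H^1$; thus $G$ also generates $L$ and $\gamma^{lag}_B(L)=\rho^{lag}(G;1)-\rho^{lag}(G;[pt]^\#)$ (this difference is independent of the choice of generating Hamiltonian). Applying the triangle inequality of Proposition \ref{prop:oh-triangle} — whose autonomy hypothesis holds for $\lambda f\circ\pi$ and for $\widetilde{\lambda f\circ\pi}$ — to $G=K\#\widetilde{\lambda f\circ\pi}$ and to $K=G\#(\lambda f\circ\pi)$ I obtain
\[
\rho^{lag}(G;1)\le\rho^{lag}(K;1)+\rho^{lag}(\widetilde{\lambda f\circ\pi};1),\qquad \rho^{lag}(G;[pt]^\#)\ge\rho^{lag}(K;[pt]^\#)-\rho^{lag}(\lambda f\circ\pi;1).
\]
Subtracting these and invoking spectrality once more — the spectra of $\lambda f\circ\pi$ and $\widetilde{\lambda f\circ\pi}$ are $\{-\lambda f(p)\}_{p\in\Crit f}$ and $\{\lambda f(p)\}_{p\in\Crit f}$, so $\rho^{lag}(\widetilde{\lambda f\circ\pi};1)+\rho^{lag}(\lambda f\circ\pi;1)\le\lambda\,\osc(f|_{\Crit f})\le\lambda\,\osc f$ — I arrive at
\[
\gamma^{lag}_B(L)\le\bigl[\rho^{lag}(K;1)-\rho^{lag}(K;[pt]^\#)\bigr]+\bigl[\rho^{lag}(\widetilde{\lambda f\circ\pi};1)+\rho^{lag}(\lambda f\circ\pi;1)\bigr]\le 2\lambda\,\osc f .
\]
For (iii): since $\lambda$ was an arbitrary element of $(\e/C_{(f;B,T)},1]$ and $\gamma^{lag}_B(L)$ does not depend on it, letting $\lambda\downarrow\e/C_{(f;B,T)}$ gives $\gamma^{lag}_B(L)\le\frac{2\,\osc f}{C_{(f;B,T)}}\,\e$, which is \eqref{eq:ratio}.

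The step I expect to cause the most friction is (ii), specifically the normalisation bookkeeping: because $\rho^{lag}(\,\cdot\,;a)$ depends on the Hamiltonian and not merely on $L$, one has to commit to a single generating Hamiltonian $G$ for $L$ and route all triangle inequalities through it, using that the leftover ambiguity is a constant independent of $a$ and therefore invisible to $\rho^{lag}(\,\cdot\,;1)-\rho^{lag}(\,\cdot\,;[pt]^\#)$; one must likewise check that replacing $\lambda f\circ\pi$ by a genuinely asymptotically constant Hamiltonian disturbs none of the spectra used. The remaining ingredients — the chord and spectrum computation for $K$ coming out of Lemma \ref{lem:(T,f)}, the scaling estimate for $C_{(\lambda f;B,T)}$, and the optimisation in $\lambda$ — are routine.
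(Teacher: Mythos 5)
Your proof is correct and follows essentially the same route as the paper's: factor through $K=H\#(\lambda f\circ\pi)$, invoke Lemma~\ref{lem:(T,f)} to see that every chord of $K$ is constant so that spectrality pins $\rho^{lag}(K;1)-\rho^{lag}(K;[pt]^\#)\le\lambda\,\osc f$, pay a second $\lambda\,\osc f$ via the triangle inequality to return to a generator of $L$, and optimise $\lambda$. Two incidental remarks: the auxiliary Hamiltonian $G=K\#\widetilde{\lambda f\circ\pi}$ is in fact \emph{equal} to $H$ (associativity of $\#$ together with $(\lambda f\circ\pi)\#\widetilde{\lambda f\circ\pi}=0$ for autonomous $f$), so the normalisation bookkeeping you flag as the main source of friction does not actually arise; and your one-sided bound $C_{(\lambda f;B,T)}\ge\lambda\,C_{(f;B,T)}$ is a small simplification of the paper's argument, which instead pre-rescales $f$ by a $\delta$ to force the exact conformality of Lemma~\ref{lem:conformal}.
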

We would like to mention that the right hand side of \eqref{eq:ratio} does not depend on
the scale change of $f$ to $\delta\, t$ for $\delta > 0$.

The following question seems to be an interesting question to ask
in regard to the precise estimate of the upper bound in this theorem and Question \ref{ques:ham-continuity}.

\begin{ques} For given $H$ satisfying the condition
in Theorem \ref{thm:capacity}, what is an optimal estimate of the
constant $\frac{2\, \osc f}{C_{(f;B,T)}}$ in terms of
$B$, $T$ and $H$? For example, can we obtain an upper bound independent of $B$ or $T$?
\end{ques}

The rest of the section is occupied by the proof of Theorem \ref{thm:capacity}.
The following proposition is a crucial ingredient of the proof, which is a variation of
Proposition 2.6 \cite{ostrov}, Proposition 3.3 \cite{entov-pol:morphism},
Proposition 3.1 \cite{usher:sharp} and Proposition 2.3 \cite{seyfad}.

\begin{prop}\label{prop:rhoLHoscf}
Let $H \in \CP C^\infty_{asc}$ in $T^*N$ such that
\be\label{eq:suppB2}
\supp \phi_H \cap o_B = \emptyset.
\ee
Take any $f \in C^\infty_{crit}(N;B)$ such that \eqref{eq:distphiH} holds. Then
\be\label{eq:rhoH1C1}
\rho^{lag}(H;1) - \rho^{lag}(H;[pt]^\#) \leq 2 \, \osc f.
\ee
\end{prop}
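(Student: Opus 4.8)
The plan is to run the Lagrangian analogue of the Ostrover--Seyfaddini displacement argument, with $f$ playing the role of the displacing Hamiltonian: by Lemma \ref{lem:(T,f)} the composed Hamiltonian $H\#(f\circ\pi)$ has a very small action spectrum, and the triangle inequality transfers this smallness to $H$ itself. As a routine preliminary I would replace $f\circ\pi$ by an autonomous, asymptotically constant Hamiltonian (for instance $\chi(|p|)(f\circ\pi)$) that agrees with $f\circ\pi$ on a disc bundle containing $\Graph df$, $\phi_H^1(o_N)$ and $\supp X_H$; this changes none of the spectral invariants appearing below, nor the conclusion of Lemma \ref{lem:(T,f)}, because every relevant Hamiltonian trajectory stays inside that disc bundle, and henceforth I write $f\circ\pi$ for this cutoff. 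Since $f\circ\pi$ is autonomous one has $(f\circ\pi)\#(-(f\circ\pi))=0$ and $\phi^t_{-(f\circ\pi)}=(\phi^t_{f\circ\pi})^{-1}$, so with $G:=H\#(f\circ\pi)$ one also has $H=G\#(-(f\circ\pi))$ by associativity of $\#$; in particular these compositions carry no hidden additive constant.

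Next I would apply the triangle inequality of Proposition \ref{prop:oh-triangle} twice --- once to $G=H\#(f\circ\pi)$ with the class $[pt]^\#$, once to $H=G\#(-(f\circ\pi))$ with the class $1$, both legitimate because $\pm(f\circ\pi)$ is autonomous ---
$$
\rho^{lag}(G;[pt]^\#)\ \le\ \rho^{lag}(H;[pt]^\#)+\rho^{lag}(f\circ\pi;1),\qquad
\rho^{lag}(H;1)\ \le\ \rho^{lag}(G;1)+\rho^{lag}(-(f\circ\pi);1),
$$
and subtract to get
$$
\rho^{lag}(H;1)-\rho^{lag}(H;[pt]^\#)\ \le\ \Big(\rho^{lag}(G;1)-\rho^{lag}(G;[pt]^\#)\Big)+\Big(\rho^{lag}(f\circ\pi;1)+\rho^{lag}(-(f\circ\pi);1)\Big).
$$
It then suffices to bound each bracket by $\osc f$. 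For the first, Lemma \ref{lem:(T,f)} applies under the standing hypotheses $\supp\phi_H\cap o_B=\emptyset$ and $\osc_{C^0}(\phi_H^1;o_N)<C_{(f;B,T)}$, so every Hamiltonian chord of $G$ relative to $(o_N,o_N)$ is a constant path $o_p$ with $p\in\Crit f$. Because $X_H$ vanishes near $o_B$ and $p\in\Crit f\subset\Int B$, we have $\phi_H^t(o_p)=o_p$ and $H(t,\cdot)$ is constant, say $\equiv c(t)$, near $o_p$, hence $\CA^{cl}_G(o_p)=-\int_0^1 c(t)\,dt-f(p)$, so $\Spec(G;N)$ is contained in an interval of length $\osc f$; by spectrality (Proposition \ref{prop:tight}) both $\rho^{lag}(G;1)$ and $\rho^{lag}(G;[pt]^\#)$ lie in this interval, bounding the first bracket by $\osc f$. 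For the second bracket, running this computation with $H=0$ (so that $G$ becomes $f\circ\pi$) gives $\Spec(f\circ\pi;N)=\{-f(p):p\in\Crit f\}$ and $\Spec(-(f\circ\pi);N)=\{f(p):p\in\Crit f\}$, whence spectrality yields $\rho^{lag}(f\circ\pi;1)\le-\min f$ and $\rho^{lag}(-(f\circ\pi);1)\le\max f$, summing to $\osc f$. Combining the two estimates gives $\rho^{lag}(H;1)-\rho^{lag}(H;[pt]^\#)\le 2\,\osc f$.

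The only genuinely geometric input --- that $\osc_{C^0}(\phi_H^1;o_N)<C_{(f;B,T)}$ forces $\Graph df$ and its image under $\phi_H^1$ to meet $o_N$ only over $\Crit f\subset\Int B$ --- is already isolated in Lemma \ref{lem:(T,f)}, so I expect no serious obstacle; what remains is the triangle inequality, spectrality, and arithmetic. The point that needs care is the additive normalization: the manipulations must be arranged so that $G\#(-(f\circ\pi))$ equals $H$ \emph{exactly} (which holds by associativity of $\#$ together with $(f\circ\pi)\#(-(f\circ\pi))=0$), so that the two triangle inequalities carry no spurious constants, and the auxiliary cutoff must be chosen so as to affect neither the spectral invariants of $G$ and $\pm(f\circ\pi)$ nor the validity of Lemma \ref{lem:(T,f)} --- both guaranteed by the fact that every relevant trajectory remains inside the fixed disc bundle on which the cutoff is inessential.
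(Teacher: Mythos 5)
Your proof is correct and follows essentially the same route as the paper: both isolate the composed Hamiltonian $G=H\#(f\circ\pi)$, use Lemma~\ref{lem:(T,f)} to pin down the chords of $G$ and hence bound the spectral gap of $G$ by $\osc f$, and then transfer the estimate to $H$ with two applications of the triangle inequality of Proposition~\ref{prop:oh-triangle}. The only noticeable departures are matters of bookkeeping rather than strategy: you make the asymptotically-constant cutoff of $f\circ\pi$ explicit (the paper leaves this implicit), you phrase the second triangle inequality via the identity $H=G\#(-(f\circ\pi))$ rather than stating the inequality directly, and where the paper quotes the exact values $\rho^{lag}(f\circ\pi;1)=-\min f$, $\rho^{lag}(-f\circ\pi;1)=\max f$ from \cite{oh:cag}, you rederive the one-sided bounds $\rho^{lag}(f\circ\pi;1)\leq -\min f$, $\rho^{lag}(-f\circ\pi;1)\leq\max f$ from the spectrality axiom and a direct computation of $\Spec(\pm f\circ\pi;N)$ — which is slightly more self-contained and is all the argument actually needs.
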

\begin{proof} Denote
$
L_f: = \Graph df, \quad L_t = \phi_H^t(L_f) = \phi_H^t(\Graph df).
$
Note that the condition \eqref{eq:suppB2} implies
\be\label{eq:HconstonB}
H_t|_B \equiv c_B(t)
\ee
for a function $c_B=c_B(t)$ depending only on $t$ but not on $x \in B$.

The following lemma is the analogue of Lemma 5.1 \cite{ostrov}.

\begin{lem}\label{lem:hLs=hLf}
\be\label{eq:Lt=Lf}
\rho^{lag}(H\# f;1) - \rho^{lag}(H\# f;[pt]^\#) \leq \osc f.
\ee
\end{lem}
\begin{proof}
By the spectrality of $\rho^{lag}(\cdot, 1)$ in general, we have
\beastar
\rho^{lag}(H \# f\circ \pi;1) & = & \CA^{cl}_{(H \# f\circ \pi)}\left(z^{H \# f\circ \pi}_{p_-}\right), \\
\rho^{lag}(H\# f\circ \pi;[pt]^\#) & = &\CA^{cl}_{(H \# f\circ \pi)}\left(z^{H \# f\circ \pi}_{p_+}\right)
\eeastar
for some $p_\pm \in L_f \cap o_N$. Using the second statement of Lemma \ref{lem:(T,f)},
we compute
\beastar
&{}& \CA^{cl}_{(H \# f\circ \pi)}\left(z^{H \# f\circ \pi}_{p_+}\right)
- \CA^{cl}_{(H \# f\circ \pi)}\left(z^{H \# f\circ \pi}_{p_-}\right)\\
& = & - \int_0^1 (H \# f\circ \pi)(t,p_+) \, dt +
 \int_0^1 (H \# f\circ \pi)(t,p_-) \, dt\\
& = & - \int_0^1 c_B(t)\, dt- f(p_+)
+ \int_0^1 c_B(t)\, dt  + f(p_-)  \\
& = & - f(p_+) + f(p_-) \leq  \max f - \min f = \osc f.
\eeastar
Here for the equality in the line next to the last, we use the identity
$$
(H \# f\circ \pi)(t,p_\pm) = H(t, p_\pm) + f(\phi_H^{t}(p_\pm))
= c_B(t) + f(p_\pm).
$$
This finishes the proof.
\end{proof}

On the other hand,  we have
$$
\phi_H^1(L_f)= \phi_H^1(\phi_{f\circ \pi}^1(o_N)) = \phi_{H\# f\circ\pi}^1(o_N)
$$
and so by the triangle inequality, Proposition \ref{prop:oh-triangle},
\beastar
\rho^{lag}(H \# (f\circ \pi);1) & \geq & \rho^{lag}(H;1) - \rho^{lag}(-f\circ \pi ;1) \\
\rho^{lag}(H \# (f\circ \pi);[pt]^\#) & \leq & \rho^{lag}(H;[pt]^\#) + \rho^{lag}(f\circ \pi;1).
\eeastar
(One can also use Proposition \ref{prop:Lag-triangle} using the concatenation
$H * (f \circ \pi)$ instead. Here
$f\circ \pi$ is not boundary flat, which is required in
Proposition \ref{prop:Lag-triangle}, but one can always reparameterize
the flow $t \mapsto \phi_{f\circ \pi}^t$ by multiplying
$\chi'(t)$ to $f \circ \pi$ so that the perturbation is as small as we want
in $L^{(1,\infty)}$-topology which in turn perturbs $\rho$ slightly.
See Lemma 5.2 \cite{oh:ajm1},
Remark 2.5 \cite{MVZ} for the precise statement on this
approximation procedure.
This enables us to apply the triangle inequality in Proposition \ref{prop:Lag-triangle}
in the current context.)

Therefore subtracting the second inequality from the first and using the identity
$$
\rho^{lag}(-f\circ \pi ;1) = \max f, \quad \rho^{lag}(f\circ \pi;1) = - \min f
$$
(see \cite{oh:cag} for its proof),
we obtain
\beastar
&{}& \rho^{lag}(H \# (f\circ \pi);1) - \rho^{lag}(H \# (f\circ \pi);[pt]^\#) \\
& \geq & \rho^{lag}(H;1) - \rho^{lag}(H;[pt]^\#) -(\max f - \min f)
\eeastar
which in turn gives rise to
\beastar
\rho^{lag}(H;1) - \rho^{lag}(H;[pt]^\#) & \leq & \rho^{lag}(H \# (f\circ \pi);1)
- \rho^{lag}(H \# (f\circ \pi);[pt]^\#)\\
&{}& +(\max f - \min f)\\
&\leq&  2\, \osc f.
\eeastar
We have finished the proof of the proposition.
\end{proof}

We now go back to the proof of Theorem \ref{thm:capacity}.

Let $H \in \CP C^\infty_{asc;B}$ and $T \supset o_B$ such that
$\phi_H^1 \equiv id$ on $T$ and assume \eqref{eq:distphiH}.

If $\osc_{C^0}(\phi_H^1;o_N) = 0$,
we have $\phi_H^1(o_N) = o_N$
and so $\rho^{lag}(H;1) - \rho^{lag}(H;[pt]^\#) = 0$ for which \eqref{eq:rhoH1C1}
obviously holds. Therefore we assume $\osc_{C^0}(\phi_H^1;o_N)\neq 0$.

Recall from Lemma \ref{lem:(T,f)} that the choice of $f$ depends only on the ball $B$ and
the neighborhood $T \supset o_B$ in $T^*N$.
Then we choose $\lambda > 0$ such that
$$
\osc_{C^0}(\phi_{H}^1;o_N) = \lambda C_{(f;B,T)}
$$
i.e.,
$$
\lambda = \frac{\osc_{C^0}(\phi_{H}^1;o_N)}
{C_{(f;B,T)}}.
$$
Obviously we have
$$
\osc_{C^0}(\phi_{H}^1;o_N) < (\lambda + \e) C_{(f;B,T)}
$$
for all $\e > 0$. We note that both $ d_{\text{\rm H}}(N\setminus B, \Crit ( \delta f))$ and the ratio
$
\frac{2 \osc f}{C_{(f;B,T)}}
$
do not depend on the choice of $\delta> 0$.

Therefore we can replace $f$ by $\delta f$ for a sufficiently small $\delta > 0$, if necessary, so
that
\be\label{eq:min<d}
\min_{p \in N \setminus B}|d(\lambda (\delta f))(p)| < d_{\text{\rm H}}(N\setminus B, \Crit ( \delta f)
\ee
which in turn implies
$$
\lambda C_{(\delta f;B,T)} = C_{(\lambda \delta f;B,T)}
$$
by Lemma \ref{lem:conformal}. From now on, we assume
\be\label{eq:min<d}
\min_{p \in N \setminus B}|d(\lambda f)(p)| < d_{\text{\rm H}}(N\setminus B, \Crit f)
\ee
without loss of any generality.

Lemma \ref{lem:conformal} also implies
$$
(\lambda+\e) C_{(f;B,T)} = C_{((\lambda+\e) f;B,T)}
$$
for all small $\e > 0$ such that
$$
\min_{p \in N \setminus B}|(\lambda+\e) df(p)| < d(N\setminus B,\Crit f).
$$
For example, we can choose any $\e > 0$ so that
\be\label{eq:choicee}
0< \e < \frac{d(N\setminus B, \Crit f)}{\min_{p \in N \setminus B}|df(p)|}.
\ee

Since \eqref{eq:rhoH1C1} holds for any pair $H,\, f$ that satisfy
\eqref{eq:suppB2} and \eqref{eq:distphiH}, applying it to the pair $(H,(\lambda+\e)f)$
for $T\supset B$ chosen above independently of $i$'s, we derive
\beastar
\rho^{lag}(H;1) - \rho^{lag}(H;[pt]^\#) & \leq & 2 \osc ((\lambda+\e) f) = 2
(\lambda+\e) \,\osc f\\
& = & 2\left(\frac{\osc_{C^0}(\phi_{H}^1;o_N)}
{C_{(f;B,T)}} + \e \right)\osc f.
\eeastar
Since this holds for all $\e > 0$ satisfying \eqref{eq:choicee}, it follows
\be\label{eq:rhoversusC0}
0 \leq \rho^{lag}(H;1) - \rho^{lag}(H;[pt]^\#) \leq
2\left(\frac{\osc f}{C_{(f;B,T)}}\right) \osc_{C^0}(\phi_{H}^1;o_N)
\ee
letting $\e \to 0$. This finishes the proof of Theorem \ref{thm:capacity}.
\qed
\medskip

\end{document}